\def\squiggly{\bgroup \markoverwith{\textcolor{black}{\lower3.5\p@\hbox{\sixly \char58}}}\ULon}
\newtheorem{theorem}[subsection]{Theorem}
\newtheorem{proposition}[subsection]{Proposition}
\newtheorem{lemma}[subsection]{Lemma}
\newtheorem{conjecture}[subsection]{Conjecture}
\newtheorem{definition}[subsection]{Definition}
\theoremstyle{remark}
\newtheorem{claim}[subsection]{Claim}
\newtheorem{remark}[subsection]{Remark}
\def\fb{{\mathfrak{b}}}
\def\fg{{\mathfrak{g}}}
\def\fh{{\mathfrak{h}}}
\def\fn{{\mathfrak{n}}}
\def\BC{{\mathbb{C}}}
\def\BN{{\mathbb{N}}}
\def\BF{{\mathbb{F}}}
\def\BR{{\mathbb{R}}}
\def\BQ{{\mathbb{Q}}}
\def\BZ{{\mathbb{Z}}}
\def\CB{{\mathcal{B}}}
\def\CS{{\mathcal{S}}}
\def\CV{{\mathcal{V}}}
\def\ph{\varphi}
\def\tt{\tilde{t}}
\def\sym{\textrm{Sym}}
\def\uu{U_q(\fg)}
\def\uup{U_q(\fn^+)}
\def\uuo{U_q(\fh)}
\def\uum{U_q(\fn^-)}
\def\UU{U_q(L\fg)}
\def\UUp{U_q(L\fn^+)}
\def\UUo{U_q(L\fh)}
\def\UUm{U_q(L\fn^-)}
\def\wUUm{\widehat{U}_q(L\fn^-)}
\def\bk{{\mathbf{k}}}
\def\bl{{\mathbf{l}}}
\def\bs{\boldsymbol{\varsigma}}
\def\nn{{\mathbb{N}^I}}
\def\zz{{\mathbb{Z}^I}}
\def\bv{{\boldsymbol{v}}}
\def\bw{{\boldsymbol{w}}}
\def\b0{{\boldsymbol{0}}}
\newcommand\iso{\,\vphantom{j^{X^2}}\smash{\overset{\sim}{\vphantom{\rule{0pt}{0.20em}}\smash{\longrightarrow}}}\,}
\def\wI{\widehat{I}}
\def\wDelta{\widehat{\Delta}}
\def\spec{\text{spec}}
\def\espec{\emph{spec}}
\def\oij{\overrightarrow{ij}}
\def\oji{\overrightarrow{ji}}
\def\sX{X}
\def\Hall{\mathcal{H}}
\def\height{\mathrm{ht}}
\def\tf{\widetilde{f}}
\def\tspec{\widetilde{\spec}}
\def\tespec{\widetilde{\espec}}
\def\tt{v}
\def\UUext{U^{\mathrm{ext}}_q(L\fg)}
\def\UUaff{U_q(\widehat{\fg})}
\def\wUUaff{\widehat{U}_q(\widehat{\fg})}
\def\UUaffp{U_q(\widehat{\fn}^+)}
\def\UUaffm{U_q(\widehat{\fn}^-)}
\def\UUaffpm{U_q(\widehat{\fn}^\pm)}
\def\UUaffh{U_q(\widehat{\fh})}
\def\UUaffg{U_q(\widehat{\fb}^+)}
\def\UUaffl{U_q(\widehat{\fb}^-)}
\def\UUaffgl{U_q(\widehat{\fb}^\pm)}
\def\wDelta{\widehat{\Delta}}
\def\wI{\widehat{I}}
\def\bph{\bar{\ph}}
\begin{document}

\title[Fusion and specialization for type ADE shuffle algebras]
      {\Large{\textbf{Fusion and specialization \\ for type ADE shuffle algebras}}}

\author[Andrei Negu\cb t]{Andrei Negu\cb t}
\address{École Polytechnique Fédérale de Lausanne (EPFL), Lausanne, Switzerland}
\address{Simion Stoilow Institute of Mathematics (IMAR), Bucharest, Romania}
\email{andrei.negut@gmail.com}

\author[Alexander Tsymbaliuk]{Alexander Tsymbaliuk}
\address{Purdue University, Department of Mathematics, West Lafayette, IN, USA}
\email{sashikts@gmail.com}

\maketitle

\begin{abstract}
Root vectors in quantum groups (of finite type) generalize to fused currents in quantum loop groups (\cite{DK}). In the present paper, we construct fused currents as duals to specialization maps of the corresponding shuffle algebras (\cite{E1,E2,FO}) in types ADE, an approach which has potential for generalization to arbitrary Kac--Moody types. Both root vectors and fused currents depend on a convex order of the positive roots, and the choice we make in the present paper is that of the Auslander--Reiten order (\cite{Ri}) corresponding to an orientation of the type ADE Dynkin diagram.
\end{abstract}

\bigskip


\section{Introduction}


\medskip

\subsection{Lie algebras of finite type}
\label{sub:intro}

Consider a finite-dimensional simple Lie algebra $\fg$ over $\BC$, which admits a triangular decomposition
$$
  \fg = \fn^+ \oplus \fh \oplus \fn^-
$$
into nilpotent parts $\fn^\pm$ and a Cartan part $\fh$. We have canonical decompositions
$$
  \fn^\pm = \bigoplus_{\alpha \in \Delta^+} \BC \cdot \sX_{\pm \alpha}
$$
($\Delta^+$ denotes a henceforth fixed set of positive roots of the Lie algebra $\fg$). The reason the one-dimensional direct summands are canonical is that they are determined by their commutation relations with the Cartan subalgebra $\fh$, in the sense that
$$
  [h,\sX_{\pm \alpha}] = \pm \alpha(h) \cdot \sX_{\pm \alpha}, \qquad \forall\, h\in \fh
$$
However, picking a basis of the one-dimensional summands is non-canonical, because the $\sX_{\pm \alpha}$'s are only determined up to constant multiples. Numerous interesting choices exist, notably that of a Chevalley basis, for which\footnote{For any ring $R$, we shall use $R^\times$ to denote the set of all nonzero elements in $R$.}
\begin{equation}
\label{eqn:chevalley}
  [\sX_{\pm \alpha}, \sX_{\pm \beta}] \in \BZ^\times \cdot \sX_{\pm(\alpha+\beta)}, \qquad \text{whenever } \alpha,\beta,\alpha+\beta\in \Delta^+
\end{equation}
This formula can be used to successively construct (up to constant multiples) the basis vectors $\sX_{\pm \alpha}$ starting from a henceforth fixed set of simple roots $I$.


\medskip

\subsection{Quantum groups}

A well-known $q$-deformation of (the universal enveloping algebra of) a finite type Lie algebra $\fg$ is the Drinfeld--Jimbo quantum group $\uu$. This is an associative $\BQ(q)$-algebra, which also admits a triangular decomposition
$$
  \uu = \uup \otimes \uuo \otimes \uum
$$
Lusztig (\cite{Lu}) constructed \emph{root vectors}
\begin{equation}
\label{eqn:root vectors}
  \{f_{\alpha}\}_{\alpha \in \Delta^+} \in \uum
\end{equation}
which are $q$-deformations of $\sX_{-\alpha} \in \fn^-$. The input (respectively tools) for Lusztig's construction is that of a reduced decomposition of the longest word in the Weyl group associated to $\fg$ (respectively the action of the corresponding braid group on $\uu$). Such a choice is equivalent (\cite{P}) to a total order of the set of positive roots
\begin{equation}
\label{eqn:total order intro}
  \Delta^+ = \{\cdots < \alpha < \cdots\}
\end{equation}
which is \emph{convex}, in the sense that for all pairs of positive roots $\alpha < \beta$ whose sum $\alpha+\beta$ is also a root, we have
\begin{equation}
\label{eqn:convex intro}
  \alpha < \alpha+\beta < \beta
\end{equation}
The following commutation relations were proved by Levendorskii--Soibelman (\cite{LS}) for all positive roots $\alpha < \beta$ (the details of the proof can be found in~\cite[\S9.3]{DCP})
\begin{equation}
\label{eqn:commutation intro}
  f_{\alpha} f_{\beta} - q^{-(\alpha,\beta)} f_{\beta} f_{\alpha} \, \in
  \bigoplus_{\beta > \gamma_1 \geq \dots \geq \gamma_t > \alpha}^{\gamma_1+\dots+\gamma_t=\alpha+\beta} \BZ[q,q^{-1}] \cdot f_{\gamma_1} \dots f_{\gamma_t}
\end{equation}
In particular, if $\alpha < \beta$ form a \emph{minimal pair} (i.e.\ $\alpha+\beta$ is a root and there do not exist roots $\alpha',\beta'$ such that $\alpha < \alpha' < \beta' < \beta$ and $\alpha'+\beta' = \alpha+\beta$) then we have
\begin{equation}
\label{eqn:commutation intro minimal}
  f_{\alpha} f_{\beta} - q^{-(\alpha,\beta)} f_{\beta} f_{\alpha} \in \BZ[q,q^{-1}]^\times \cdot f_{\alpha+\beta}
\end{equation}
This formula can be used to successively construct (up to constant multiples) the root vectors $f_{\alpha}$ starting from a set of simple roots $I$.


\medskip

\subsection{Quantum loop groups}

The main object of our study is the quantum loop group $\UU$, which is an associative algebra with a triangular decomposition
$$
  \UU = \UUp \otimes \UUo \otimes \UUm
$$
Whereas $\uum$ was generated by the symbols $\{f_i\}_{i \in I}$, the negative half $\UUm$ of the quantum loop group is generated by the coefficients of formal series
$$
  f_i(x) = \sum_{d \in \BZ} \frac {f_{i,d}}{x^d},  \qquad \forall\, i\in I
$$
(the explicit relations between these generators are well-known, and we recall them in Definition \ref{def:quantum loop} below). By generalizing Lusztig's construction, Ding-Khoroshkin (\cite{DK}) defined so-called \emph{fused currents} for every positive root
$$
  \tf_{\alpha}(x) = \sum_{d \in \BZ} \frac {\tf_{\alpha,d}}{x^d}, \qquad \forall\, \alpha\in \Delta^+
$$
whose coefficients are infinite sums which converge in certain representations of quantum loop groups. In Definition \ref{def:completion} below, we will define a completion of the negative half $\UUm$ in which we conjecture that all the coefficients $\tf_{\alpha,d}$ live, see Conjecture~\ref{conj:dk}, thus providing a formal algebraic treatment of the main construction of \cite{DK}.

\medskip
\noindent
Then it remains to ask whether the analogues of the properties \eqref{eqn:commutation intro} and \eqref{eqn:commutation intro minimal} hold for fused currents. For instance, if $\alpha < \beta$ is a minimal pair of positive roots, we posit in Conjecture \ref{conj:comm} that
\begin{equation}
\label{eqn:comm intro}
  \tf_{\alpha}(x) \tf_{\beta}(y) - \tf_{\beta}(y) \tf_{\alpha}(x) \cdot \frac {xq^{\tt} - yq^{(\alpha,\beta)}}{xq^{\tt+(\alpha,\beta)}-y} =
  c(q) \cdot \delta\left( \frac {xq^{\tt+(\alpha,\beta)}}y \right)  \tf_{\alpha+\beta}(xq^u)
\end{equation}
for some $u,\tt \in \BZ$, $c(q) \in \BZ[q,q^{-1}]^\times$ that depend on $\alpha$ and $\beta$ (let $\delta(x) = \sum_{d \in \BZ} x^d$)~\footnote{As $q \to 1$, the quantum loop group $\UU$ converges to the universal enveloping algebra $U(L\mathfrak{g}) = U(\fg[z,z^{-1}])$, with the Lie bracket on the loop algebra $\fg[z,z^{-1}]$ defined via $[az^k, bz^l] = [a,b]z^{k+l}$ for $a,b\in \fg$, $k,l\in \BZ$. One expects the limit of $\tf_{\alpha,d}$ to be $\sX_{-\alpha}z^d$, so that~\eqref{eqn:comm intro} converges~to
$$
  \left[\sX_{-\alpha} \delta \left(\frac zx \right), \sX_{-\beta} \delta \left(\frac zy \right) \right] =
  c(1)\cdot \delta \left(\frac xy \right) \sX_{-\alpha-\beta} \delta \left(\frac zx \right)
$$
Extracting the coefficient of $x^{-k}y^{-l}$ from the formula above gives rise to \eqref{eqn:chevalley}.}.


\medskip

\subsection{Specialization maps}

To understand fused currents explicitly, we will use the dual shuffle algebra picture (studied in the quantum loop group setting by Enriquez in \cite{E1}, motivated by a construction of Feigin--Odesskii in \cite{FO}). We will recall the shuffle algebra in detail in Section \ref{sec:shuffle}, but in a nutshell, it is defined by
$$
  \CS = \bigoplus_{\bk = \sum_{i \in I} k_i \bs^i \in \nn} \CS_{\bk}
$$
$$
  \CS_{\bk} = \Big\{\text{certain rational functions } R(z_{i1},\dots,z_{ik_i})_{i \in I} \Big\}
$$
In the present paper, $\BN = \{0,1,2,\dots\}$ and $\bs^i \in \nn$ is the $I$-tuple with a single 1 at position $i \in I$ and 0 everywhere else. The vector space $\CS$ is made into an algebra via the shuffle product \eqref{eqn:mult}. The shuffle algebra $\CS$ thus defined is relevant to us because it is isomorphic to the positive half of the quantum loop group
$$
  \CS \simeq \UUp
$$
as well as dual to the negative half
$$
  \CS \otimes \UUm \xrightarrow{\langle \cdot, \cdot \rangle} \BQ(q)
$$
For every $\alpha \in \Delta^+$ we may describe the fused currents $\tf_{\alpha}(x)$ by the dual linear functional, which we will call a \emph{specialization map}
$$
  \CS_{\alpha} \xrightarrow{\tspec^{(x)}_{\alpha}} \BQ(q)[x,x^{-1}], \qquad \tspec^{(x)}_\alpha(R) = \Big \langle R, \tf_{\alpha}(x) \Big \rangle
$$
In Subsection \ref{sub:specialization}, we conjecture that $\tspec_{\alpha}^{(x)}(R)$ is equal (up to a scalar prefactor and a power of $x$) to a certain derivative of the rational function $R$ when its variables are specialized according to (write $\alpha = \sum_{i\in I} k_i\bs^i$ for suitable $k_i \in \BN$)
$$
  \big\{z_{ib} = xq^{\sigma_{ib}}\big\}^{i \in I}_{1\leq b \leq k_i}
$$
for certain $\sigma_{ib} \in \BZ$. The specialization maps strongly depend on the total order \eqref{eqn:total order intro}, and we do not yet have a complete understanding of $\tspec^{(x)}_{\alpha}$ in full generality.


\medskip

\subsection{Quivers}

Beside laying out the general expectations of fused currents and specialization maps (as in the preceding paragraph), the main purpose of the present paper is to construct specialization maps in the particular case when the total order \eqref{eqn:total order intro} is a refinement of the Auslander--Reiten partial order on $\Delta^+$ developed in \cite{Ri} (see Section \ref{sec:quivers} for an overview). Explicitly, the following is our main result.

\medskip

\begin{theorem}\label{thm:main intro}
Let $Q$ be any orientation of a type ADE Dynkin diagram, and let us fix any total order \eqref{eqn:total order intro} that refines the Auslander--Reiten partial order of the positive roots induced by $Q$. For every positive root $\alpha = \sum_{i \in I}k_i \bs^i\in \Delta^+$, define
\begin{equation}
\label{eqn:spec intro}
  \CS_{\alpha} \xrightarrow{\emph{spec}^{(x)}_{\alpha}} \BQ(q)[x,x^{-1}]
\end{equation}
$$
  \espec^{(x)}_\alpha(R) = \gamma^{(x)}_\alpha \cdot R(z_{i1},\dots,z_{ik_i}) \Big|_{z_{ib} \mapsto x q^{\tau(i)}, \forall i,b}
$$
(see Definition \ref{def:specialization ar} for the specific factor $\gamma^{(x)}_\alpha \in \BQ(q^{1/2})^\times \cdot x^{\BZ}$ that appears in the formula above) where $\tau\colon I\to \BZ$ is a height function, i.e. satisfies $\tau(i) = \tau(j)+1$ if there exists an edge in $Q$ from $i$ to $j$. If we let $f_{\alpha}(x)$ be the dual of the specialization maps
$$
  \espec^{(x)}_\alpha(R) = \Big \langle R, f_{\alpha}(x) \Big \rangle
$$
then for any minimal pair $\alpha < \beta$ of positive roots, we have
\begin{equation}
\label{eqn:fused intro equality}
  f_{\alpha}(x) f_{\beta}(y) -  f_{\beta}(y)f_{\alpha}(x)  \frac {xq-yq^{-1}}{x-y} = \delta\left(\frac xy\right) f_{\alpha+\beta}(x)
\end{equation}
\end{theorem}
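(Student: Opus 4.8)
I would verify \eqref{eqn:fused intro equality} by pairing both sides against the shuffle algebra. Recall from Section~\ref{sec:shuffle} the perfect Hopf pairing $\langle\,\cdot\,,\cdot\,\rangle\colon\CS\otimes\UUm\to\BQ(q)$; writing $\Delta$ for the coproduct on $\CS\cong\UUp$ dual to the product of $\UUm$, and $\Delta_{\alpha,\beta}R\in\CS_\alpha\woo\CS_\beta$ for the part of $\Delta R$ of degree $\alpha$ in the first and $\beta$ in the second tensor factor, one has
$$
  \big\langle R,\,f_\alpha(x)f_\beta(y)\big\rangle=\big(\espec^{(x)}_\alpha\otimes\espec^{(y)}_\beta\big)\big(\Delta_{\alpha,\beta}R\big),\qquad
  \big\langle R,\,f_\beta(y)f_\alpha(x)\big\rangle=\big(\espec^{(y)}_\beta\otimes\espec^{(x)}_\alpha\big)\big(\Delta_{\beta,\alpha}R\big).
$$
Only these components of $\Delta R$ enter, since every coefficient of every series $f_\bullet(\cdot)$ is homogeneous of a definite degree in $\nn$; in particular both sides of \eqref{eqn:fused intro equality} lie in degree $\alpha+\beta$, so the theorem is equivalent to the identity of $\BQ(q)[[x^{\pm1},y^{\pm1}]]$--valued functionals on $\CS_{\alpha+\beta}$:
\begin{equation}
\label{eqn:plan-red}
  \big(\espec^{(x)}_\alpha\otimes\espec^{(y)}_\beta\big)\big(\Delta_{\alpha,\beta}R\big)
  -\big(\espec^{(y)}_\beta\otimes\espec^{(x)}_\alpha\big)\big(\Delta_{\beta,\alpha}R\big)\cdot\frac{xq-yq^{-1}}{x-y}
  =\delta\!\left(\frac xy\right)\espec^{(x)}_{\alpha+\beta}(R).
\end{equation}

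Next I would make the left--hand side of \eqref{eqn:plan-red} explicit using the shuffle coproduct formula. There, $\Delta_{\alpha,\beta}R$ is produced from $R$ by splitting its variables into an ``$\alpha$--group'' (of colours $\alpha=\sum k_i\bs^i$) and a ``$\beta$--group'', dividing by the product of the $\zeta$--factors joining the two groups, and expanding with the $\alpha$--group variables large; applying $\espec^{(x)}_\alpha\otimes\espec^{(y)}_\beta$ then sends the $\alpha$--group to $xq^{\tau(i)}$, the $\beta$--group to $yq^{\tau(j)}$ and multiplies by $\gamma^{(x)}_\alpha\gamma^{(y)}_\beta$. Hence $\big(\espec^{(x)}_\alpha\otimes\espec^{(y)}_\beta\big)\big(\Delta_{\alpha,\beta}R\big)$ is the expansion for $|x|\gg|y|$ of an explicit rational function $\Psi_R(x,y)$ whose poles lie only on hyperplanes $x=yq^{c}$, $c\in\BZ$. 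The same computation on $\Delta_{\beta,\alpha}R$ gives the expansion for $|y|\gg|x|$ of a rational function, and the role of the prefactor $\tfrac{xq-yq^{-1}}{x-y}$ is exactly to cancel the mismatch between the two $\zeta$--denominators coming from the two orders of separating the variables, so that $\big(\espec^{(y)}_\beta\otimes\espec^{(x)}_\alpha\big)\big(\Delta_{\beta,\alpha}R\big)\cdot\tfrac{xq-yq^{-1}}{x-y}$ is the expansion for $|y|\gg|x|$ of the \emph{same} $\Psi_R$. Checking this last point is a bookkeeping of $q$--powers, using that a minimal pair in type ADE satisfies $(\alpha,\beta)=-1$ and that $\tau(i)=\tau(j)+1$ along the edges of $Q$. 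Granting it, the left--hand side of \eqref{eqn:plan-red} is the difference of the two region expansions of a single rational function, hence a sum over $c\in\BZ$ of $\delta(xq^{-c}/y)$ times an invertible scalar times $\mathrm{Res}_{x=yq^c}\Psi_R$.

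It then remains to evaluate these residues. For $c=0$: on $x=y$ the $\alpha$--group and $\beta$--group specializations merge into the single pattern $z_{ib}\mapsto xq^{\tau(i)}$ which is precisely the one defining $\espec_{\alpha+\beta}$, so $\mathrm{Res}_{x=y}\Psi_R$ equals $\espec^{(x)}_{\alpha+\beta}(R)$ up to a universal scalar built from $\gamma^{(x)}_\alpha\gamma^{(y)}_\beta$, the coproduct normalization, and $\mathrm{Res}_{x=y}\tfrac{xq-yq^{-1}}{x-y}$; that this scalar is the correct one — namely that it reproduces the right--hand side of \eqref{eqn:plan-red} — is exactly what the choice of the constants $\gamma$ in Definition~\ref{def:specialization ar} is designed to achieve. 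The crux is the vanishing $\mathrm{Res}_{x=yq^c}\Psi_R=0$ for every $c\ne0$. Such a pole arises from a $\zeta$--factor joining an $\alpha$--group variable of colour $i$ and a $\beta$--group variable of colour $j$ with $i=j$ or $i\sim j$; on $x=yq^c$ these two specialized variables land at a zero of that $\zeta$, and one must show that the remaining specialized arguments of $R$ then contain a \emph{wheel}, forcing $R$ — which satisfies the wheel conditions cutting out $\CS_{\alpha+\beta}$ inside the space of all rational functions — to vanish to at least the order of the pole. Proving that a wheel is inevitably present at the spurious hyperplanes $x=yq^{c}$ ($c\ne0$) and at no others is the heart of the argument, and it is precisely here that the Auslander--Reiten combinatorics of Section~\ref{sec:quivers} — the classification of minimal pairs $\alpha<\beta$ in terms of the quiver $Q$, together with the way the supports and $\tau$--heights of $\alpha$, $\beta$ and $\alpha+\beta$ interlock for the indecomposable $Q$--representations — must be used in an essential way. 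I expect this vanishing step to be the main obstacle; a possible fallback is to reduce it to a finite computation by testing \eqref{eqn:plan-red} on an explicit spanning set of $\CS_{\alpha+\beta}$ (or on products of the generators of $\CS$) rather than on an arbitrary $R$.
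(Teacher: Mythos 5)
Your overall strategy coincides with the paper's: pair both sides against $\CS_{\alpha+\beta}$, recognize the two terms as the $|x|\ll|y|$ and $|x|\gg|y|$ expansions of one rational function, identify the prefactor $\tfrac{xq-yq^{-1}}{x-y}$ as the ratio of the two $\zeta$-products, and read off the right-hand side as a residue at $x=y$, with the constants $\gamma^{(x)}_{\bv}$ of Definition \ref{def:specialization ar} calibrated precisely so that this residue equals $\espec^{(x)}_{\alpha+\beta}(R)$. You also correctly isolate the crux: the absence of poles other than at $x=y$ (by the explicit formula \eqref{eqn:count} of Lemma \ref{lem:key}, the only candidates are $x=yq^{\pm 2}$, not arbitrary $x=yq^{c}$).

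However, that crux is exactly what you do not prove, so the proposal is a plan rather than a proof, and the mechanism you suggest for it is not the one the paper uses. You propose to show that the wheel conditions force the specialized numerator to vanish on $x=yq^{\pm2}$; the paper mentions this divisibility only as motivation and instead proves Proposition \ref{lem:pole ar} by writing $R$ as a linear combination of shuffle products $z_{i_11}^{d_1}*\cdots*z_{i_k1}^{d_k}$ (essentially your ``fallback''), bounding the order of the pole at $x=yq^{\pm2}$ of each summand of the symmetrization by a count over allowable total orders of the specialized variables, and reducing that count to the inequality $-\langle \bw_{I'},\bv_{I'}\rangle_{I'}\leq\max(0,-\langle\bw,\bv\rangle)$, which is finally established via the surjectivity of the restriction map $\mathrm{Ext}^1(W,V)\twoheadrightarrow\mathrm{Ext}^1(W_{I'},V_{I'})$ for quiver representations. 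This is where the Auslander--Reiten input actually enters, together with Lemma \ref{lem:minimal ar} --- itself a nontrivial argument with Baer sums of extensions --- which you implicitly assume when you invoke $(\alpha,\beta)=-1$ but never upgrade to the finer statement $\langle\alpha,\beta\rangle=-1$, $\langle\beta,\alpha\rangle=0$ that both the computation of the prefactor and the vanishing of the pole bounds at $x=yq^{\pm2}$ genuinely require. Until these two ingredients are supplied, the identity \eqref{eqn:fused intro equality} is not established.
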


\medskip
\noindent
We conjecture that $f_{\alpha}(x)$ (respectively $\spec_\alpha^{(x)}$) equals $\tf_{\alpha}(x)$ (respectively $\tspec_\alpha^{(x)}$) up to a constant multiple, which would establish formula \eqref{eqn:comm intro} in the particular case of ADE types where the convex order on the set of positive roots is a refinement of the Auslander--Reiten partial order.


\medskip

\subsection{Perspectives}
While the initial motivation for our work was the exploration of the fused currents $\tf_{\alpha}(x)$ of \cite{DK}, we propose that specialization maps can provide an altogether alternative definition of these objects. The latter viewpoint may be applied in principle to quantum loop groups of any Kac--Moody type, where the original definition of fused currents is unavailable due to the non-existence of the braid group action. Thus, the approach we propose is to define specialization maps
\begin{equation}
\label{eqn:specialization perspectives}
  \CS_{\alpha} \xrightarrow{\text{spec}^{(x)}_{\alpha}} \BQ(q)[x,x^{-1}]
\end{equation}
directly, and then redefine the fused currents $f_{\alpha}(x)$ as their duals
\begin{equation}
\label{eqn:duality perspectives}
  \spec^{(x)}_\alpha(R) = \Big \langle R, f_{\alpha}(x) \Big \rangle
\end{equation}
The specializations \eqref{eqn:specialization perspectives} should be defined so that one has a commutation relation
\begin{multline*}
  f_{\alpha}(x) f_{\beta}(y) - f_{\beta}(y) f_{\alpha}(x) \Big(\text{a rational function in }x,y \Big) = \\
  \Big(\text{a sum of } \delta \text{ functions and products of }f_\gamma \Big)
\end{multline*}
As the contents of the present paper show, the above construction features interesting combinatorics even when $\fg$ is of finite type, and it is bound to be challenging (yet doable in our opinion) for $\fg$ of affine type. For $\fg$ of general type, we expect the combinatorics of the specialization maps to be extremely difficult.


\medskip

\subsection{Acknowledgements}

Both authors would like to thank Mathematisches Forschungsinstitut Oberwolfach (Oberwolfach, Germany) and Centre International de Rencontres Mathématiques (Luminy, France) for their hospitality and wonderful working conditions in the Summer 2023 while the present work was being carried out. A.N.\ gratefully acknowledges support from the NSF grant DMS-$1845034$, the MIT Research Support Committee, and the PNRR grant CF 44/14.11.2022 titled “Cohomological Hall algebras of smooth surfaces and applications”. A.T.\ gratefully acknowledges NSF grants DMS-$2037602$ and DMS-$2302661$.


\bigskip

\section{Quantum loop groups and shuffle algebras}
\label{sec:shuffle}


\medskip

\subsection{Definitions}

Fix a Lie algebra $\fg$ of finite type and a decomposition $\Delta = \Delta^+ \sqcup \Delta^-$ of the corresponding root system into positive and negative roots. We also fix a set of simple roots $\{\alpha_i\}_{i \in I} \subset \Delta^+$ and consider the inner product $(\cdot, \cdot)$ on the root lattice. The Cartan matrix corresponding to $\fg$ is
$$
  (a_{ij})_{i,j \in I} \quad \text{with} \quad a_{ij} = \frac {2(\alpha_i,\alpha_j)}{(\alpha_i,\alpha_i)}
$$
In what follows, let $q$ be a formal variable, set $q_i = q^{\frac {(\alpha_i,\alpha_i)}2}$ for all $i \in I$, and consider the generating series
$$
  e_i(x) = \sum_{d \in \BZ} \frac {e_{i,d}}{x^d}, \qquad
  f_i(x) = \sum_{d \in \BZ} \frac {f_{i,d}}{x^d}, \qquad
  \ph^\pm_i(x) = \sum_{d = 0}^\infty \frac {\ph^\pm_{i,d}}{x^{\pm d}}
$$
and the formal delta function $\delta(x) = \sum_{d \in \BZ} x^d$. For any $i,j \in I$, set
\begin{equation}
\label{eqn:zeta}
  \zeta_{ij} \left(\frac xy \right) = \frac {x - yq^{-(\alpha_i,\alpha_j)}}{x - y}
\end{equation}
We now recall the definition of the quantum loop group (new Drinfeld realization).

\medskip

\begin{definition}\label{def:quantum loop}
The quantum loop group associated to $\fg$ is:
$$
  \UU = \BQ(q) \Big \langle e_{i,d}, f_{i,d}, \ph_{i,d'}^\pm \Big \rangle_{i \in I}^{d \in \BZ, d' \geq 0} \Big/
  \text{relations \eqref{eqn:rel 0 affine}--\eqref{eqn:rel 3 affine}}
$$
where we impose the following relations for all $i,j \in I$:
\begin{equation}
\label{eqn:rel 0 affine}
  e_i(x) e_j(y) \zeta_{ji} \left(\frac yx \right) =\, e_j(y) e_i(x) \zeta_{ij} \left(\frac xy \right)
\end{equation}
\begin{multline}
\label{eqn:rel 1 affine}
  \sum_{\sigma \in S(1-a_{ij})} \sum_{k=0}^{1-a_{ij}} (-1)^k {1-a_{ij} \choose k}_{q_i} \cdot  \\
  \qquad \qquad \qquad e_i(x_{\sigma(1)})\dots e_i(x_{\sigma(k)}) e_j(y) e_i(x_{\sigma(k+1)}) \dots e_i(x_{\sigma(1-a_{ij})}) = 0,
  \quad \mathrm{if}\ i\ne j
\end{multline}
\begin{equation}
\label{eqn:rel 2 affine}
  \ph_j^\pm(y) e_i(x) \zeta_{ij} \left(\frac xy \right) = e_i(x) \ph_j^\pm(y) \zeta_{ji} \left(\frac yx \right)
\end{equation}
\begin{equation}
\label{eqn:rel 2 affine bis}
  \ph_{i}^{\pm}(x) \ph_{j}^{\pm'}(y) = \ph_{j}^{\pm'}(y) \ph_{i}^{\pm}(x), \quad
  \ph_{i,0}^+ \ph_{i,0}^- = 1
\end{equation}
as well as the opposite relations with $e$'s replaced by $f$'s, and finally the relation:
\begin{equation}
\label{eqn:rel 3 affine}
  \left[ e_i(x), f_j(y) \right] =
  \frac {\delta_i^j \delta \left(\frac xy \right)}{q_i - q_i^{-1}} \cdot  \Big( \ph_i^+(x) - \ph_i^-(y) \Big)
\end{equation}
\end{definition}

\medskip
\noindent
The algebra $\UU$ is $\zz \times \BZ$-graded via
$$
  \deg e_{i,d} = (\bs^i,d), \qquad \deg \ph_{i,d}^\pm = (0,\pm d), \qquad \deg f_{i,d} = (-\bs^i,d)
$$
where $\bs^i = \underbrace{(0,\dots,0,1,0,\dots,0)}_{1\text{ on the }i\text{-th position}}$. We have the triangular decomposition (\cite{H})
\begin{equation*}
  \UU = \UUp \otimes \UUo \otimes \UUm
\end{equation*}
into the subalgebras generated by $e_{i,d}$, $\ph_{i,d}^{\pm}$, $f_{i,d}$, respectively. Note the isomorphism
\begin{equation}
\label{eqn:iso halves}
  \UUp \iso \UUm
\end{equation}
determined by sending $e_{i,d} \mapsto f_{i,-d}$ for all $i \in I$, $d \in \BZ$ (here, we use that $\UUp$ is generated by $e_{i,d}$ with the defining relations~(\ref{eqn:rel 0 affine},~\ref{eqn:rel 1 affine}), and similarly for~$\UUm$).


\medskip

\subsection{The shuffle algebra}
\label{sub:FO-algebra}

We now recall the trigonometric degeneration (\cite{E1, E2}) of the Feigin--Odesskii shuffle algebra (\cite{FO}) of type $\fg$. Consider the vector space of rational functions
\begin{equation*}
  \CV \ = \bigoplus_{\bk = \sum_{i \in I} k_i \bs^i \in \nn} \BQ(q)(\dots,z_{i1},\dots,z_{ik_i},\dots)^{\text{sym}}_{i \in I}
\end{equation*}
which are \emph{color-symmetric}, meaning that they are symmetric in the variables $z_{i1},\dots,z_{ik_i}$ for each $i \in I$ separately (the terminology is inspired by the fact that $i \in I$ is called the color of the variable $z_{ib}$, for any $i \in I$ and $b \geq 1$). We make the vector space $\CV$ into a $\BQ(q)$-algebra via the following shuffle product:
\begin{equation}
\label{eqn:mult}
  F(\dots, z_{i1}, \dots, z_{i k_i}, \dots) * G(\dots, z_{i1}, \dots,z_{i l_i}, \dots) = \frac 1{\bk! \cdot \bl!}\,\cdot
\end{equation}
$$
  \textrm{Sym} \left[ F(\dots, z_{i1}, \dots, z_{ik_i}, \dots) G(\dots, z_{i,k_i+1}, \dots, z_{i,k_i+l_i}, \dots)
               \prod_{i,j \in I} \prod_{b \leq k_i}^{c > k_j} \zeta_{ij} \left( \frac {z_{ib}}{z_{jc}} \right) \right]
$$
In \eqref{eqn:mult}, $\sym$ denotes symmetrization with respect to the:
\begin{equation*}
  (\bk+\bl)! := \prod_{i\in I} (k_i+l_i)!
\end{equation*}
permutations of the variable sets $\{z_{i1}, \dots, z_{i,k_i+l_i}\}$ for each $i$ independently.

\medskip

\begin{definition}
\label{def:shuf}
(\cite{E1,E2}, inspired by \cite{FO})
The shuffle algebra $\CS$ is the subspace of $\CV$ consisting of rational functions of the form:
\begin{equation}
\label{eqn:shuf}
  R(\dots,z_{i1},\dots,z_{ik_i},\dots) =
  \frac {r(\dots,z_{i1},\dots,z_{ik_i},\dots)}
  {\prod^{\text{unordered}}_{\{i \neq i'\} \subset I} \prod_{1\leq b \leq k_i}^{1\leq b' \leq k_{i'}} (z_{ib} - z_{i'b'})}
\end{equation}
where $r$ is a color-symmetric Laurent polynomial that satisfies the wheel conditions:
\begin{equation}
\label{eqn:wheel}
  r(\dots, z_{ib}, \dots)\Big|_{(z_{i1},z_{i2},z_{i3}, \dots, z_{i,1-a_{ij}}) \mapsto (w, w q_i^{2}, wq_i^{4}, \dots, w q_i^{-2a_{ij}}),\, z_{j1} \mapsto w q_i^{-a_{ij}}} =\, 0
\end{equation}
for any distinct $i, j \in I$.
\end{definition}

\medskip
\noindent
It is elementary to show that $\CS$ is closed under the shuffle product~\eqref{eqn:mult}, and is thus an algebra. Because of~(\ref{eqn:wheel}), any $r$ as in \eqref{eqn:shuf} is actually divisible by:
$$
  \prod^{\text{unordered}}_{\{i \neq i'\} \subset I:a_{ii'}=0} \prod_{1\leq b \leq k_i}^{1\leq b' \leq k_{i'}} (z_{ib} - z_{i'b'})
$$
Therefore, rational functions $R$ satisfying~(\ref{eqn:shuf},~\ref{eqn:wheel}) can only have simple poles on the diagonals $z_{ib}=z_{i'b'}$ with adjacent $i,i'\in I$, that is, such that $a_{ii'}<0$.


\medskip

\subsection{Shuffle algebras vs quantum loop groups}
\label{sub:qaff vs FO}

The algebra $\CS$ is graded by $\bk = \sum_{i \in I} k_i \bs^i \in \nn$ that encodes the number of variables of each color, and by the total homogeneous degree $d\in \BZ$. We write:
$$
  \deg R = (\bk,d)
$$
and say that $\CS$ is $(\nn \times \BZ)$-graded. We will denote the graded pieces by:
$$
  \CS = \bigoplus_{\bk \in \nn} \CS_{\bk} \ \ \qquad \text{and} \qquad \ \
  \CS_{\bk} = \bigoplus_{d \in \BZ} \CS_{\bk, d}
$$
We now give the first connection between shuffle algebras and quantum loop groups.

\medskip

\begin{theorem}
\label{thm:iso}
(\cite{E1} for the homomorphism, \cite{NT} for the isomorphism)
The assignment $e_{i,d}\mapsto z_{i1}^d \in \CS_{\bs^i,d}$ gives rise to an algebra isomorphism
\begin{equation}
\label{eqn:iso}
  \Upsilon\colon \UUp \iso \CS
\end{equation}
\end{theorem}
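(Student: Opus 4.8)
\emph{The plan} is to treat the three assertions hidden in the statement separately: that $\Upsilon$ is a well-defined algebra homomorphism, that it is surjective, and that it is injective. The homomorphism property (the part attributed to \cite{E1}) amounts to verifying that the images $\{z_{i1}^d\}$ satisfy the defining relations \eqref{eqn:rel 0 affine}--\eqref{eqn:rel 1 affine} of $\UUp$. For the quadratic relation \eqref{eqn:rel 0 affine} this is immediate from \eqref{eqn:mult}: since variables of distinct colors are not symmetrized against one another, one has $z_{i1}^a * z_{j1}^b = z_{i1}^a z_{j1}^b \cdot \zeta_{ij}(z_{i1}/z_{j1})$ for $i \ne j$, so that interchanging the two factors produces exactly the ratio $\zeta_{ij}(z_{i1}/z_{j1}) / \zeta_{ji}(z_{j1}/z_{i1})$ prescribed by \eqref{eqn:rel 0 affine}. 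For the Serre relation \eqref{eqn:rel 1 affine} I would expand the image of the left-hand side as a symmetrized shuffle product of $1-a_{ij}$ variables of color $i$ and one of color $j$, and check that the alternating quantum-binomial sum vanishes identically as a rational function; this vanishing is the algebraic shadow of the wheel conditions \eqref{eqn:wheel}, and it also confirms that the image of $\Upsilon$ lands in $\CS$ rather than merely in $\CV$.

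For the isomorphism (the part attributed to \cite{NT}) I would exhibit a single combinatorial basis shared by both sides, built relative to a chosen convex order on $\Delta^+$. On the shuffle side, each graded piece $\CS_{\bk,d}$ should be spanned by suitably ordered shuffle products of the degree-one generators $z_{i1}^d$, organized by the associated standard Lyndon loop words; on the quantum loop side, $\UUp_{\bk,d}$ carries the corresponding PBW basis built from loop root vectors. Concretely, I would attach to each pair (positive root $\alpha$, loop index $d$) an explicit shuffle element $E_{\alpha,d} \in \CS$ and prove, by induction on $|\bk| = \sum_i k_i$, that ordered monomials in the $E_{\alpha,d}$ span every $\CS_{\bk,d}$: given $R \in \CS_{\bk,d}$, its poles along the adjacent diagonals are simple (by the discussion after Definition \ref{def:shuf}) and its behavior is further constrained by \eqref{eqn:wheel}, which together let me match a leading term of $R$ against such a monomial, subtract, and reduce the degree. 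This yields surjectivity, while linear independence of the same monomials---checked against the non-degenerate pairing $\langle \cdot, \cdot \rangle \colon \CS \otimes \UUm \to \BQ(q)$ recalled in the introduction, or equivalently by matching graded dimensions with the Kostant-type partition generating function of $\UUp_{\bk,d}$---yields injectivity.

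\emph{The main obstacle} is surjectivity. Checking the relations is a finite rational-function computation, and once a spanning set by products of generators is in hand, injectivity follows formally from the non-degeneracy of the shuffle pairing or from a dimension count. The real work is the generation claim: an arbitrary element of $\CS_{\bk}$ is constrained only by the simple-pole bound of \eqref{eqn:shuf} and the wheel conditions \eqref{eqn:wheel}, and extracting from it an honest decomposition into products of single-variable elements requires controlling the structure of $\CS_{\bk}$ for all $\bk$ simultaneously---it is precisely here that the exact form of the wheel conditions, and the divisibility they force, must be used in an essential way.
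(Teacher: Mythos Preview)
The paper does not give its own proof of Theorem~\ref{thm:iso}: it is stated as a citation result, with the homomorphism attributed to \cite{E1} and the isomorphism to \cite{NT}, and the text immediately moves on to consequences. So there is no in-paper argument to compare against; what you have written is a sketch of the argument from the cited sources.

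Your outline is broadly in line with how \cite{NT} proceeds (the Lyndon-word / PBW strategy you describe is precisely the content of that reference), and your identification of surjectivity as the substantive step is correct. Two small points of imprecision: first, your check of \eqref{eqn:rel 0 affine} only treats $i\neq j$, but the relation for $i=j$ (where both variables have the same color and a genuine symmetrization occurs) also has to be verified; second, the remark that the Serre vanishing ``is the algebraic shadow of the wheel conditions'' and ``confirms that the image of $\Upsilon$ lands in $\CS$'' conflates two separate facts---the Serre relation \eqref{eqn:rel 1 affine} holds already as a rational-function identity in $\CV$, while the image lies in $\CS$ simply because the single-variable generators trivially satisfy \eqref{eqn:wheel} and $\CS$ is closed under $*$. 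Neither point is a genuine gap, but both would need to be stated more carefully in an actual write-up.
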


\medskip
\noindent
In particular, Theorem \ref{thm:iso} implies that $\CS$ is generated by the monomials $\{z_{i1}^d\}_{i \in I}^{d \in \BZ}$ under the shuffle product~\eqref{eqn:mult}. Besides being isomorphic to the positive half of quantum loop groups (as above), shuffle algebras are also dual to the negative half of quantum loop groups, as we recall next. In what follows, let $Dz = \frac {dz}{2\pi i z}$.

\medskip

\begin{theorem}
\label{thm:dual}
(\cite{E1} for the construction of the pairing, \cite{NT} for its non-degeneracy) There is a non-degenerate pairing
\begin{equation}
\label{eqn:pairing}
  \CS \otimes \UUm \xrightarrow{\langle \cdot, \cdot \rangle} \BQ(q)
\end{equation}
given by
\begin{equation}
\label{eqn:pair shuf generators}
  \Big \langle R, f_{i_1,-d_1} \dots f_{i_k,-d_k} \Big \rangle = \int_{|z_{1}| \ll \dots \ll |z_{k}|}
      \frac {R(z_{1}, \dots,z_{k}) z_{1}^{-d_1} \dots z_{k}^{-d_k}}
          {\prod_{1 \leq a < b \leq k} \zeta_{i_a i_b} (z_{a}/z_{b})} \prod_{a=1}^k D z_{a}
\end{equation}
for any $k\in \BN,\, i_1, \dots,i_k \in I,\,  d_1, \dots, d_k \in \BZ,\, R \in \CS_{\bs_{i_1}+\dots+\bs_{i_k},d_1+\dots+d_k}$ (all pairings between elements of non-opposite degrees are set to be $0$). In the right-hand side of~\eqref{eqn:pair shuf generators}, we plug each variable $z_{a}$ into an argument of color $i_a$ of the function $R$; since the latter is symmetric, the result is independent of any choices made. The symbol $\int$ in \eqref{eqn:pair shuf generators} refers to the contour integral over concentric circles centered at the origin of the complex plane (the notation $|z_a| \ll |z_b|$ means that the these circles are very far away from each other when compared to the poles of the integrand).
\end{theorem}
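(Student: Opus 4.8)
The statement packages together two assertions: that the right-hand side of \eqref{eqn:pair shuf generators} descends through the defining relations of $\UUm$ (so that $\langle R, f\rangle$ depends only on $f$, not on the chosen expression as a combination of monomials $f_{i_1,-d_1}\cdots f_{i_k,-d_k}$), and that the resulting form is non-degenerate. The plan is to obtain well-definedness by identifying the integral with the canonical bialgebra pairing between the two halves of $\UU$, and then to treat the two radicals of the pairing separately: the left radical by a direct residue argument, the right radical by a PBW/Gram-matrix computation. Throughout I will use Theorem \ref{thm:iso}, which lets me transport any question about $\CS$ to the algebra $\UUp$, together with the grading-reversing isomorphism \eqref{eqn:iso halves} identifying $\UUp$ and $\UUm$.

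For well-definedness, recall that the realization of $\UU$ as a Drinfeld double furnishes a canonical Hopf pairing $(\cdot,\cdot)\colon \UUg \otimes \UUl \to \BQ(q)$ between the two Borel halves, determined by its values on the Drinfeld generators and by the compatibilities $(ab,f)=(a\otimes b,\widehat{\Delta} f)$ and $(a,f_1f_2)=(\widehat{\Delta} a,f_1\otimes f_2)$ with the coproducts. Restricting to $\UUp\otimes\UUm$ and precomposing the first slot with $\Upsilon^{-1}$ yields a genuine bilinear form on $\CS\otimes\UUm$, manifestly independent of how $f$ is written. It then remains to check that this form is computed by \eqref{eqn:pair shuf generators}: agreement on generators is immediate from the $k=1$ case, which gives $\langle z_{i1}^d, f_{j,-d'}\rangle=\delta_i^j\delta_d^{d'}$ (matching the canonical pairing up to a diagonal rescaling), and the two coproduct compatibilities translate, on the integral side, into factoring the nested contour $|z_1|\ll\dots\ll|z_k|$ into sub-blocks and tracking the cross terms $\zeta_{i_ai_b}(z_a/z_b)$ across the cut — which is precisely what the denominator in \eqref{eqn:pair shuf generators} is built to reproduce. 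A more hands-on alternative avoids the abstract pairing entirely: one verifies directly that the integral annihilates the relations defining $\UUm$, the quadratic relations dual to \eqref{eqn:rel 0 affine} corresponding to swapping two adjacent contours via the functional equation relating $\zeta_{ij}$ and $\zeta_{ji}$, and the $q$-Serre relations dual to \eqref{eqn:rel 1 affine} being killed exactly by the wheel conditions \eqref{eqn:wheel}, since the iterated residues produced by the Serre combination land on the wheel locus.

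Non-degeneracy splits into the two radicals. The left radical is elementary: fix $0\neq R\in\CS_\bk$ and a color assignment of $z_1,\dots,z_k$ compatible with $\bk$, and expand each $\zeta_{i_ai_b}(z_a/z_b)$ with $a<b$ in the region $|z_a|\ll|z_b|$. The operator $\int_{|z_1|\ll\dots\ll|z_k|}\prod_a Dz_a$ then extracts one Laurent coefficient of the nonzero series $R/\prod_{a<b}\zeta_{i_ai_b}(z_a/z_b)$, and letting $(d_1,\dots,d_k)$ range over $\BZ^k$ recovers every such coefficient; hence $\langle R,-\rangle\neq 0$. The right radical is the crux. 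After transporting along Theorem \ref{thm:iso} and \eqref{eqn:iso halves}, it suffices to show that the induced form on $\UUp$ has trivial right kernel. For this I would fix a convex order on $\Delta^+$, construct the associated root vectors, and use them to produce PBW-type bases $\{E_\bullet\}$ of $\CS\cong\UUp$ and $\{F_\bullet\}$ of $\UUm$ indexed by the same set of $\Delta^+$-colored loop-multidegrees. Computing the Gram matrix $\big(\langle E_\bullet,F_\bullet\rangle\big)$ and showing that, with respect to a suitable dominance order on indices, it is triangular with diagonal entries in $\BQ(q)^\times$ forces both radicals to vanish. Alternatively, once the integral is known to compute the canonical bialgebra pairing, right non-degeneracy follows from the standard non-degeneracy of that pairing on a quantum double.

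The main obstacle is the right radical. On the $\CS$ side the pairing reads off Laurent coefficients one at a time, so non-degeneracy is visible; but on the $\UUm$ side the monomials $f_{i_1,-d_1}\cdots f_{i_k,-d_k}$ are linearly dependent, and one cannot detect a nonzero pairing coefficient-by-coefficient. One must instead control the entire linear structure of $\UUm$ — concretely through a PBW basis built from root vectors — and prove that the pairing separates its elements. Establishing the triangularity of the Gram matrix (equivalently, that the image of $\UUm$ fills out the relevant graded dual of $\CS$) is the technical heart, and is exactly where the convexity of the order on $\Delta^+$ and the multiplicativity of root vectors under the shuffle product \eqref{eqn:mult} must be brought to bear.
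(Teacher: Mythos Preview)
The paper does not itself prove Theorem~\ref{thm:dual}; it attributes the construction of the pairing to~\cite{E1} and the non-degeneracy to~\cite{NT}, and the only remark accompanying the statement is that the present normalization differs from that of~\cite{NT} by a scalar. So there is no in-paper proof to compare against, only the cited references.

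Your sketch is in line with what those references do. The well-definedness argument via the topological bialgebra pairing on the Drinfeld double, together with the direct check that the quadratic and Serre relations of $\UUm$ are killed by the integral (the latter via the wheel conditions), is exactly the mechanism in~\cite{E1,E2}. The left-radical argument by reading off Laurent coefficients is correct and elementary. For the right radical, your proposed route through PBW bases indexed by convex orders on $\Delta^+$ and a triangular Gram matrix is precisely the strategy of~\cite{NT}; you have correctly identified this as the substantive step.

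One caution: your ``alternative'' for the right radical, namely invoking the ``standard non-degeneracy of that pairing on a quantum double'', is circular in this setting. For quantum loop groups the non-degeneracy of the restriction of the Hopf pairing to $\UUp\otimes\UUm$ is not a formal consequence of the double construction; it is exactly the content of the theorem being cited from~\cite{NT}. So that shortcut should be dropped, and the PBW/Gram-matrix argument is the one that actually carries the weight.
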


\medskip
\noindent
Note that the pairing \eqref{eqn:pair shuf generators} differs from that of \cite[(5.16, 5.17)]{NT} by a scalar multiple, but we make the present choice to keep our formulas as clear as possible.


\medskip

\subsection{The slope filtration}

We will now construct a filtration of $\CS$ by a notion of slope $\mu \in \BQ$, with the ultimate goal of defining a completion of the shuffle algebra. By Theorem \ref{thm:iso}, this will lead to a specific completion of the positive half of the quantum loop group, hence also of the negative half by the isomorphism \eqref{eqn:iso halves}. The coefficients of fused currents are expected to lie in this completion.

\medskip
\noindent
Given $\bk = \sum_{i \in I} k_i\bs^i$ and $\bl = \sum_{i \in I} l_i \bs^i$, we will write $\bl \leq \bk$ if $l_i \leq k_i$ for all $i \in I$. We also write $|\bl| = \sum_{i \in I} l_i$. The following is a close relative of \cite[Definition 3.3]{N1}.

\medskip

\begin{definition}\label{def:slope}
For any $\mu \in \BQ$, we say that $R \in \CS_\bk$ has slope $\leq \mu$ if
\begin{equation}
\label{eqn:slope}
  \lim_{\xi \rightarrow \infty} \frac {R(\dots,\xi z_{i1},\dots,\xi z_{il_i},z_{i,l_{i+1}},\dots,z_{ik_i},\dots)}{\xi^{\mu |\bl|}}
\end{equation}
is finite for all $\bl \leq \bk$. Let $\CS_{\bk,\leq \mu} \subset \CS_\bk$ denote the set of such elements and set
$$
  \CS_{\leq \mu}=\bigoplus_{\bk\in \nn} \CS_{\bk,\leq \mu}
$$
\end{definition}

\noindent
One can show that $\CS_{\leq \mu}$ is a subalgebra of $\CS$ (cf.~\cite[Proposition 2.3]{N2} for the argument in a completely analogous setup). Moreover, the set
$$
  \CB^+_{\mu} = \mathop{\bigoplus_{(\bk,d) \in \nn \times \BZ}}_{d = \mu|\bk|} \CB_{\bk,d} \quad \text{with} \quad  \CB_{\bk,d} = \CS_{\leq \frac d{|\bk|}} \cap \CS_{\bk,d}
$$
is also a subalgebra of $\CS$, which we will refer to as a \emph{slope subalgebra}. Note that each $\CB_{\bk,d}$ is a finite-dimensional $\BQ(q)$-vector space (cf.\ the proof of Lemma~\ref{lem:path}). By analogy with \cite[Theorem 1.1]{N1}, one can show that the multiplication map
\begin{equation}
\label{eqn:factorization 1}
  m\colon \bigotimes^{\rightarrow}_{\mu \in \BQ} \CB^+_{\mu} \iso \CS
\end{equation}
is an isomorphism of vector spaces (the arrow $\rightarrow$ refers to taking the product in increasing order of $\mu$). More generally, the subalgebras $\CS_{\leq \nu}$ of $\CS$ admit factorizations as in \eqref{eqn:factorization 1}, but with $\mu$ only running over the set $(-\infty, \nu]$.

\medskip

\begin{proposition}
\label{prop:commute}
For any $a_1 \in \CB^+_{\nu_1}, \dots, a_t \in \CB_{\nu_t}^+$, we have
\begin{equation}
\label{eqn:commute}
  a_1 \dots a_t \ \in \bigotimes_{\mu \in [\min(\nu_s), \max(\nu_s)]}^{\rightarrow} \CB^+_{\mu}
\end{equation}
\end{proposition}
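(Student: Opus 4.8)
The plan is to reduce the statement to the factorization isomorphism \eqref{eqn:factorization 1} together with an inductive argument on the number $t$ of factors. The base case $t=1$ is trivial, and the essential content is the case $t=2$: I claim that for $a \in \CB^+_{\nu_1}$ and $b \in \CB^+_{\nu_2}$ with, say, $\nu_1 > \nu_2$, the product $ab$ lies in $\bigotimes^{\rightarrow}_{\mu \in [\nu_2,\nu_1]} \CB^+_\mu$. Granting this, the general case follows: given $a_1 \cdots a_t$, one repeatedly uses the $t=2$ case to move factors past one another and rewrite each product $\CB^+_{\nu_s} \CB^+_{\nu_{s+1}}$ (when $\nu_s > \nu_{s+1}$) as a sum of ordered products with slopes squeezed between $\nu_{s+1}$ and $\nu_s$; since all the $\nu_s$ lie in $[\min_s \nu_s, \max_s \nu_s]$, after finitely many such rewrites every monomial that appears is an ordered product of elements of $\CB^+_\mu$ with $\mu$ ranging in that interval. (This is the usual "straightening" argument, and one should check it terminates — e.g. by an induction on a suitable statistic like $\sum_s \mathbf{1}_{\nu_s > \nu_{s+1}}$ or on the multiset of slopes — but that is routine.)

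So the crux is the $t=2$ claim. Here is how I would argue it. First, $ab \in \CS_{\bk+\bl, d_a+d_b}$ where $\deg a = (\bk, d_a=\nu_1|\bk|)$ and $\deg b = (\bl, d_b = \nu_2|\bl|)$. Apply the factorization \eqref{eqn:factorization 1} to write $ab = \sum (\text{ordered products of }\CB^+_\mu\text{-elements})$. The point is to show that only $\mu \in [\nu_2, \nu_1]$ can occur. For the \emph{upper} bound, I would use that $\CS_{\leq \nu_1}$ is a subalgebra (stated in the excerpt): since $a \in \CB^+_{\nu_1} \subset \CS_{\leq \nu_1}$ and $b \in \CB^+_{\nu_2} \subset \CS_{\leq \nu_2} \subset \CS_{\leq \nu_1}$ (as $\nu_2 < \nu_1$), we get $ab \in \CS_{\leq \nu_1}$; and the excerpt notes that $\CS_{\leq \nu_1}$ factors as $\bigotimes^{\rightarrow}_{\mu \le \nu_1} \CB^+_\mu$, so no slope exceeding $\nu_1$ appears. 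For the \emph{lower} bound, I would run the symmetric argument on the opposite side: there is an analogous slope filtration measuring the growth of $R$ as a subset of variables is sent to $0$ (equivalently, applying the anti-automorphism $z_{ib} \mapsto z_{ib}^{-1}$, or using the symmetry of \eqref{eqn:factorization 1}), under which $\CB^+_\mu$ sits in the "slope $\ge \mu$" part; since $a$ has slope $\ge \nu_1 \ge \nu_2$ and $b$ has slope $\ge \nu_2$ in this opposite sense, $ab$ has slope $\ge \nu_2$, ruling out any factor $\CB^+_\mu$ with $\mu < \nu_2$.

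The main obstacle I anticipate is making the lower bound precise: the excerpt only explicitly states that $\CS_{\le \mu}$ is a subalgebra and that it factors as a product of slope subalgebras over $(-\infty,\mu]$, so I need the mirror-image statement for the "$\ge \mu$" filtration. I would establish this by transporting the whole slope formalism through the involution of $\CS$ induced by $z_{ib} \mapsto z_{ib}^{-1}$ (which is an algebra anti-isomorphism up to a harmless relabeling, since $\zeta_{ij}(x/y)$ and $\zeta_{ij}(y/x)$ are interchanged and the shuffle factors behave accordingly), noting that it swaps the $\xi \to \infty$ limit in \eqref{eqn:slope} for a $\xi \to 0$ limit and sends $\CB^+_\mu$ to $\CB^+_{-\mu}$. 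Once both one-sided containments are in hand, the $t=2$ claim — and hence, by the straightening induction, the Proposition — follows. A secondary point requiring a line of justification is that the grading forces the \emph{total} degree to match: every monomial appearing in the expansion of $a_1\cdots a_t$ has $\bk$-degree $\sum_s \bk_s$ and $d$-degree $\sum_s \nu_s |\bk_s|$, which is automatically consistent with each factor being of the form $\CB_{\bk',d'}$ with $d' = \mu|\bk'|$ for the appropriate $\mu$, so no extra constraint is lost.
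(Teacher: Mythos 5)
Your proposal is correct and follows essentially the same route as the paper: the upper bound on the slopes appearing in $a_1\cdots a_t$ comes from $\CS_{\leq \max(\nu_s)}$ being a subalgebra that factors over $(-\infty,\max(\nu_s)]$, and the lower bound from the mirror filtration (the paper defines $\CS_{\geq \mu}$ directly via the $\xi\to 0$ limit in \eqref{eqn:slope two} and observes that $\CB_{\bk,d}=\CS_{\geq d/|\bk|}\cap\CS_{\bk,d}$ by homogeneity, which is the same fact your involution $z_{ib}\mapsto z_{ib}^{-1}$ encodes). Your reduction to $t=2$ plus straightening is superfluous --- and the termination of that straightening is less routine than you suggest, being essentially the area argument of Proposition \ref{prop:olivier} --- since your two-sided slope bound applies verbatim to the full product $a_1\cdots a_t$, which is exactly what the paper does.
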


\medskip

\begin{proof}
Since $\CS_{\leq \mu}$ is an algebra for any $\mu$, the fact that $a_1,\dots,a_t$ lie in $\CS_{\leq \max(\nu_s)}$ implies that $a_1\dots a_t \in \CS_{\leq \max(\nu_s)}$. By the sentence following \eqref{eqn:factorization 1}, we have
\begin{equation}
\label{eqn:commute 1}
  a_1 \dots a_t \ \in \bigotimes_{\mu \in (-\infty,\max(\nu_s)]}^{\rightarrow} \CB^+_{\mu}
\end{equation}
However, completely analogously to Definition \ref{def:slope}, one can define the subalgebra $\CS_{\geq \mu} \subset \CS$ with $\CS_{\geq \mu}\cap \CS_{\bk}$ consisting of rational functions such that
\begin{equation}
\label{eqn:slope two}
  \lim_{\xi \rightarrow 0} \frac {R(\dots,\xi z_{i1},\dots,\xi z_{il_i},z_{i,l_{i+1}},\dots,z_{ik_i},\dots)}{\xi^{\mu |\bl|}}
\end{equation}
is finite for all $\bl \leq \bk$. Moreover, for any $(\bk,d) \in \nn \times \BZ$, it is clear that
$$
  \CB_{\bk,d} = \CS_{\geq \frac d{|\bk|}} \cap \CS_{\bk,d}
$$
because properties \eqref{eqn:slope} and \eqref{eqn:slope two} are equivalent for a rational function of homogeneous degree $d=\mu|\bk|$ in $\bk$ variables. Hence, by analogy with \eqref{eqn:commute 1}, we~obtain
\begin{equation}
\label{eqn:commute 2}
  a_1\dots a_t \ \in \bigotimes_{\mu \in [\min(\nu_s),\infty)}^{\rightarrow} \CB^+_{\mu}
\end{equation}
Thus, combining~(\ref{eqn:commute 1},~\ref{eqn:commute 2}) with the fact that the map \eqref{eqn:factorization 1} is an isomorphism, we conclude \eqref{eqn:commute}.
\end{proof}


\medskip

\subsection{Slopes and the pairing}

By abuse of notation, we will also refer to $\CB^+_{\mu}$ as a subalgebra of $\UUp$ via the isomorphism \eqref{eqn:iso}. Therefore, we obtain the following analogue of \eqref{eqn:factorization 1}
\begin{equation*}
  \bigotimes^{\rightarrow}_{\mu \in \BQ} \CB^+_{\mu} \iso \UUp
\end{equation*}
Using the isomorphism \eqref{eqn:iso halves}, we also have a factorization
\begin{equation}
\label{eqn:factorization 3}
  \bigotimes^{\rightarrow}_{\mu \in \BQ} \CB^-_{\mu} \iso \UUm
\end{equation}
where $\CB^-_\mu$ refers to the image of $\CB^+_\mu$ under \eqref{eqn:iso halves} (we will denote the graded pieces of $\CB^-_\mu$ by $\CB_{-\bk,-d}$, with $\mu=\frac{d}{|\bk|}$, in order to differentiate them from those of $\CB^+_\mu$). The following result explains that the pairing \eqref{eqn:pairing} is determined by its values on the slope subalgebras (cf.~\cite[Proposition 3.12]{N1} for the argument in an analogous~setup).

\medskip

\begin{proposition}
\label{prop:pair slopes}
For any $\{b^+_\mu \in \CB^+_\mu, b^-_\mu \in \CB^-_\mu\}_{\mu \in \BQ}$ (with almost all of the $b^+_\mu, b^-_\mu$ being 1) we have the following formula for the pairing \eqref{eqn:pairing}
$$
  \left \langle \prod_{\mu \in \BQ}^{\rightarrow} b^+_\mu, \prod_{\mu \in \BQ}^{\rightarrow} b^-_\mu \right \rangle = \prod_{\mu \in \BQ} \langle b^+_\mu, b^-_\mu \rangle
$$
\end{proposition}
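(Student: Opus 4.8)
The plan is to reduce the statement to an orthogonality/triangularity fact between the slope pieces of $\CS$ and of $\UUm$, using the two complementary factorizations already established. First I would expand both sides using bilinearity of the pairing, so that everything comes down to understanding a single pairing $\langle b^+_{\mu_1} \cdots b^+_{\mu_r}, b^-_{\nu_1} \cdots b^-_{\nu_s}\rangle$ where the $b^+$'s are ordered by increasing slope and the $b^-$'s likewise. The claim then amounts to: the pairing between $\CB^+_\mu$ and $\CB^-_{\mu'}$ vanishes unless $\mu = \mu'$, and on the "diagonal" the pairing factors as the product of the individual slope-pairings with no cross terms. So the two things to prove are a \emph{vanishing} statement and a \emph{compatibility} statement.

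For the vanishing, I would argue by degree reasons combined with the slope bounds. Writing $b^+_\mu \in \CB_{\bk_\mu, d_\mu}$ with $d_\mu = \mu|\bk_\mu|$, the pairing \eqref{eqn:pair shuf generators} is only nonzero between opposite graded components, so we may assume $\sum \bk_{\mu} = \sum \bl_{\nu}$ and $\sum d_\mu = \sum d'_\nu$ (using $\CB^-_\nu \subset \CB_{-\bl_\nu,-d'_\nu}$ with $d'_\nu = \nu|\bl_\nu|$). The key input is Proposition~\ref{prop:commute}: a product $b^+_{\mu_1}\cdots b^+_{\mu_r}$ with $\mu_1 \le \cdots \le \mu_r$ lies in $\bigotimes^{\rightarrow}_{\mu \in [\mu_1,\mu_r]}\CB^+_\mu$, and dually the product of the $b^-$'s lies in the span of ordered products of slope pieces with slopes in $[\nu_1,\nu_s]$. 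I would then invoke the analogue of Proposition~\ref{prop:pair slopes} one already "knows" on generators — i.e.\ bootstrap: the statement is really an induction on the number of distinct slopes appearing, where at each step one peels off the smallest (or largest) slope and uses that elements of $\CB^-_\mu$ pair to zero with elements of $\CS_{\leq \mu'}$ for $\mu' < \mu$ (this is where the contour-integral form of the pairing, or equivalently the interplay of the $\xi \to \infty$ and $\xi \to 0$ limits in Definitions~\ref{def:slope} and the $\CS_{\geq\mu}$ variant from the proof of Proposition~\ref{prop:commute}, does the work). Concretely: if $\nu$ is the smallest slope among the $b^-$'s, I would show $\langle a, b^-_\nu \cdot(\text{rest})\rangle$ picks out only the component of $a$ in $\bigotimes^{\rightarrow}_{\mu \ge \nu}$, so by downward induction on slopes the pairing is forced to be "diagonal" and multiplicative.

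The step I expect to be the main obstacle is precisely this orthogonality between mismatched slopes, i.e.\ showing $\langle \CB^+_\mu, \CB^-_{\mu'}\rangle = 0$ for $\mu \neq \mu'$ and, more delicately, that on a product the pairing does not mix different slope blocks. The cleanest route is probably a coproduct argument: the pairing \eqref{eqn:pairing} is a Hopf pairing between $\UUp \cong \CS$ and $\UUm$ (with respect to the standard topological coproduct on the shuffle side, which is the "restriction + extraction of leading term" operation controlling slopes), so the factorization \eqref{eqn:factorization 1} is compatible with the coproduct in a way that makes the slope grading self-dual under $\langle\cdot,\cdot\rangle$. One then checks that $\CB^+_\mu$ is a sub-coalgebra-like piece whose matrix coefficients against $\CB^-_{\mu'}$ respect the slope. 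I would model this on \cite[Proposition 3.12]{N1} as cited, transporting that argument to the present type-ADE setting; the routine part is verifying that the shuffle coproduct here has the same slope-filtered behavior, and the genuinely new content, if any, is checking the finite-dimensionality and non-degeneracy of the induced pairing $\CB^+_\mu \otimes \CB^-_\mu \to \BQ(q)$ so that the factorization on both sides is honestly compatible. Once the mismatched slopes vanish and the matched ones are finite-dimensional non-degenerate blocks, the multiplicativity formula follows by matching dimensions and the isomorphisms \eqref{eqn:factorization 1}, \eqref{eqn:factorization 3}.
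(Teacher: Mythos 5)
The paper offers no proof of this proposition: it is stated with only a pointer to \cite[Proposition 3.12]{N1} ``for the argument in an analogous setup.'' Your sketch --- degree reasons forcing the slope-diagonal vanishing for individual blocks, plus the Hopf-pairing/coproduct compatibility with the slope filtration (modelled on \loccit) to rule out cross-terms between blocks in a product --- is essentially a reconstruction of that cited argument, so the approach matches. One small correction: non-degeneracy of the restricted pairings $\CB^+_\mu \otimes \CB^-_\mu \to \BQ(q)$ is not needed for this statement (which is only a factorization formula); it enters later, in the proof of Proposition \ref{prop:completion pairing}.
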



\medskip

\subsection{Products and paths}

Let us consider a product
\begin{equation}
\label{eqn:product p}
  \pi = \prod_{\mu \in \BQ}^\rightarrow b_{\mu} \in \UUm_{-\bk,-d}
\end{equation}
with $b_\mu \in \CB^-_\mu$ for all $\mu \in \BQ$, such that almost all of the $b_{\mu}$ are 1. We further assume that each $b_\mu$ is homogeneous of some degree $(-\bk_{\mu}, -d_{\mu}) \in (-\nn) \times \BZ$, almost all of which will be $(0,0)$. To such a product~\eqref{eqn:product p} we associate the sequence of vectors
$$
  \Big( \dots, ( |\bk_{\mu}|, d_{\mu} ) , \dots \Big)_{\mu \in \BQ} \subset \BN \times \BZ
$$
Since almost all of the vectors in the sequence above are $(0,0)$, by placing these vectors head-to-tail (in increasing order of $\mu \in \BQ$), we obtain a convex path $p = \text{path}(\pi)$ in the lattice plane from the origin $(0,0)$ to $(|\bk|,d)$, see an example below. We call $(|\bk|,d)$ the \emph{size} of the path $p$ and the vectors $(|\bk_{\mu}|, d_{\mu} )$ the \emph{legs} of the path.

\medskip

\begin{picture}(100,150)(-110,-17)
\label{pic:par}

\put(0,0){\circle*{2}}\put(20,0){\circle*{2}}\put(40,0){\circle*{2}}\put(60,0){\circle*{2}}\put(80,0){\circle*{2}}\put(100,0){\circle*{2}}\put(120,0){\circle*{2}}\put(0,20){\circle*{2}}\put(20,20){\circle*{2}}\put(40,20){\circle*{2}}\put(60,20){\circle*{2}}\put(80,20){\circle*{2}}\put(100,20){\circle*{2}}\put(120,20){\circle*{2}}\put(0,40){\circle*{2}}\put(20,40){\circle*{2}}\put(40,40){\circle*{2}}\put(60,40){\circle*{2}}\put(80,40){\circle*{2}}\put(100,40){\circle*{2}}\put(120,40){\circle*{2}}\put(0,60){\circle*{2}}\put(20,60){\circle*{2}}\put(40,60){\circle*{2}}\put(60,60){\circle*{2}}\put(80,60){\circle*{2}}\put(100,60){\circle*{2}}\put(120,60){\circle*{2}}\put(0,80){\circle*{2}}\put(20,80){\circle*{2}}\put(40,80){\circle*{2}}\put(60,80){\circle*{2}}\put(80,80){\circle*{2}}\put(100,80){\circle*{2}}\put(120,80){\circle*{2}}\put(0,100){\circle*{2}}\put(20,100){\circle*{2}}\put(40,100){\circle*{2}}\put(60,100){\circle*{2}}\put(80,100){\circle*{2}}\put(100,100){\circle*{2}}\put(120,100){\circle*{2}}\put(0,120){\circle*{2}}\put(20,120){\circle*{2}}\put(40,120){\circle*{2}}\put(60,120){\circle*{2}}\put(80,120){\circle*{2}}\put(100,120){\circle*{2}}\put(120,120){\circle*{2}}

\put(0,-10){\vector(0,1){140}} 
\put(0,60){\vector(1,0){140}} 

\textcolor{red}{\put(0,60){\line(2,-3){40}}} 
\textcolor{red}{\put(37,0){\line(2,1){40}}} 
\textcolor{red}{\put(73,20){\line(2,5){40}}} 

\put(-32,57){$(0,0)$} 
\put(115,125){$(|\bk|,d)$} 

\end{picture}

\medskip

\begin{lemma}
\label{lem:path}
For any convex path $p$, the vector space $\UUm_p$ spanned by products \eqref{eqn:product p} with $\emph{path}(\pi) = p$ is finite-dimensional.
\end{lemma}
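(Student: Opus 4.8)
The idea is to reduce finite-dimensionality of $\UUm_p$ to finite-dimensionality of the individual slope pieces $\CB_{-\bk_\mu,-d_\mu}$, which was asserted (via the proof of Lemma~\ref{lem:path} itself — so we must actually prove it here) when the slope subalgebras were introduced. Fix the convex path $p$ from $(0,0)$ to $(|\bk|,d)$, with legs $(n_\mu, d_\mu)_{\mu\in\BQ}$, where $n_\mu = |\bk_\mu|$; only finitely many legs are nonzero, and convexity forces $d_\mu = \mu n_\mu$ for the slope-$\mu$ leg. By the factorization \eqref{eqn:factorization 3} (or rather its consequence that the multiplication map $\bigotimes^\rightarrow_\mu \CB^-_\mu \to \UUm$ is an isomorphism of vector spaces), the space $\UUm_p$ is precisely the image of $\bigotimes_{\mu}^{\rightarrow}\big(\bigoplus_{|\bk_\mu|=n_\mu,\ d_\mu=\mu n_\mu}\CB_{-\bk_\mu,-d_\mu}\big)$ under multiplication, the tensor product being over the finitely many slopes appearing in $p$. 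Hence $\dim\UUm_p \le \prod_{\mu}\big(\sum_{|\bk_\mu|=n_\mu}\dim\CB_{-\bk_\mu,-d_\mu}\big)$, and it suffices to show each factor is finite, i.e.\ that for each fixed $\mu$ and each $n$, only finitely many $\bk_\mu$ with $|\bk_\mu|=n$ have $\CB_{-\bk_\mu,-d_\mu}\ne 0$ and each such $\CB_{-\bk_\mu,-d_\mu}$ is finite-dimensional.

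The second point — finite-dimensionality of a single graded piece $\CB_{\bk,d}=\CS_{\le d/|\bk|}\cap\CS_{\bk,d}$ — is the heart of the matter. By \eqref{eqn:shuf}, an element $R\in\CS_{\bk,d}$ is $r/\prod(z_{ib}-z_{i'b'})$ with $r$ a color-symmetric Laurent polynomial; fixing $\bk$ pins down the denominator, so $R$ is determined by $r$, whose total degree is fixed at $d$ plus the (fixed) degree of the denominator. The slope condition \eqref{eqn:slope} with $\bl=\bk$ says $R$ grows no faster than $\xi^{\mu|\bk|}=\xi^d$ under uniform rescaling of all variables — which is automatic given the homogeneity degree $d$ — but applied to proper sub-tuples $\bl<\bk$ it bounds the degree in which any subset of variables can appear from above; combined with the complementary lower bound coming from \eqref{eqn:slope two} (the equality $\CB_{\bk,d}=\CS_{\ge d/|\bk|}\cap\CS_{\bk,d}$ noted in the proof of Proposition~\ref{prop:commute}, valid since homogeneity degree equals $\mu|\bk|$), these two constraints sandwich, for each partition of the variables into a block and its complement, the degree of $r$ in that block between two fixed integers. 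A Laurent polynomial of fixed total degree whose partial degrees (with respect to every "block vs.\ complement" splitting) are constrained to a bounded window spans a finite-dimensional space, so $\CB_{\bk,d}$ is finite-dimensional.

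For the first point, that only finitely many $\bk$ of given size $|\bk|=n$ contribute at slope $\mu$: the slope-$\mu$ piece in degree $(\bk,d)$ with $d=\mu n$ is nonzero only for finitely many $\bk$ simply because $\bk$ ranges over the finite set $\{\bk\in\nn : |\bk|=n\}$ — there are finitely many $I$-tuples of nonnegative integers summing to $n$. (No subtlety here; this finiteness is purely combinatorial once the leg sizes $n_\mu$ are fixed by $p$.) Assembling: $\UUm_p$ is a sum, over the finitely many slopes $\mu$ of $p$ and over the finitely many $\bk_\mu$ of the prescribed size, of products of the finite-dimensional pieces $\CB_{-\bk_\mu,-d_\mu}$, hence finite-dimensional.

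\textbf{Main obstacle.} The delicate step is the sandwich argument for $\dim\CB_{\bk,d}<\infty$: one must check carefully that the numerator $r$ really is forced to lie in a finite-dimensional space, i.e.\ that the upper bounds from \eqref{eqn:slope} (for all $\bl\le\bk$) together with the identification of $\CB_{\bk,d}$ with $\CS_{\ge d/|\bk|}\cap\CS_{\bk,d}$ pin down all the relevant partial degrees of $r$ — not just in "contiguous" blocks of variables but for arbitrary subsets, using color-symmetry to reduce to the contiguous case. This is the "proof of Lemma~\ref{lem:path}" already invoked in passing when slope subalgebras were defined, so the expectation is that it runs exactly parallel to the finite-type argument in the cited \cite[Proposition 2.3]{N2}; the only care needed is bookkeeping the shift by the (fixed) degree of the denominator $\prod(z_{ib}-z_{i'b'})$ when translating "slope of $R$" into "degree of $r$."
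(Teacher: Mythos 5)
Your proposal is correct and follows essentially the same route as the paper: the paper's proof is the single observation that the graded pieces of the slope subalgebras $\CB^-_\mu$ are finite-dimensional, because Laurent polynomials of fixed total homogeneous degree with degree bounded above in each variable span a finite-dimensional space (your ``sandwich'' over all blocks $\bl\leq\bk$ is a slightly more elaborate version of this; the single-variable upper bounds plus fixed total degree already suffice). The remaining reductions you make --- finitely many legs, finitely many $\bk$ with $|\bk|=n$, and bounding $\dim\UUm_p$ by the product of the dimensions of the relevant graded pieces --- are exactly what the paper leaves implicit.
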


\medskip

\begin{proof}
This is an immediate consequence of the fact that the graded pieces of the algebras $\CB^-_\mu$ are finite-dimensional, itself a consequence of the fact that the space of Laurent polynomials in $|\bk|$ variables of fixed total homogeneous degree, but with degree bounded from above in each variable, is finite-dimensional.
\end{proof}

\noindent
Consider any collection of homogeneous elements
$$
  a_1, \dots, a_t \in \UUm
$$
By \eqref{eqn:factorization 3}, each of these elements can be written as
$$
  a_s = \pi_s^1+ \pi_s^2 + \dots
$$
where each $\pi_s^1,\pi_s^2,\dots$ is of the form \eqref{eqn:product p}. Then
$$
  P_s = \Big\{ \text{path}(\pi_s^1), \text{path}(\pi_s^2),\dots \Big\}
$$
is a finite set of convex paths of size $(|\bk_s|,d_s)$, and we will write $P$ for the finite collection of concatenations of any path from $P_1$ with any path from $P_2$ \dots with any path from $P_t$.

\medskip

\begin{proposition}
\label{prop:olivier}
With $a_1,\dots,a_t$ as above, the product $a_1\dots a_t$ is a finite sum of products $\pi$ of the form \eqref{eqn:product p}, such that $\emph{path}(\pi)$ lies below some path in $P$.
\end{proposition}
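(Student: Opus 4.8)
The plan is to reduce to a single product by multilinearity and then to induct on the path, viewed as a monovariant in a suitable finite poset. Writing each $a_s = \pi_s^1 + \pi_s^2 + \cdots$ via \eqref{eqn:factorization 3} and distributing, it suffices to prove that a single product $\pi_1 \cdots \pi_t$, with each $\pi_s$ of the form \eqref{eqn:product p}, equals a finite sum of products $\pi$ of the form \eqref{eqn:product p} whose path lies below the concatenation $p := \mathrm{path}(\pi_1) * \cdots * \mathrm{path}(\pi_t)$, which belongs to $P$. Discarding trivial legs, I would regard $\pi_1 \cdots \pi_t$ as an \emph{ordered} product $c_1 c_2 \cdots c_m$ of slope-homogeneous elements $c_j \in \CB^-_{\nu_j}$, where $(c_1, \dots, c_m)$ lists the legs of $\pi_1$ in increasing slope, then those of $\pi_2$, and so on; placing the corresponding leg vectors $(|\bk_{c_j}|, d_{c_j})$ head-to-tail in this order reproduces exactly the path $p$. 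Note that each nontrivial leg has $|\bk_{c_j}| \geq 1$, and that every slope $\nu_j$ lies in the finite interval $[\nu_{\min}, \nu_{\max}]$ spanned by the slopes that occur.

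Let $\CP$ denote the set of all lattice paths from $(0,0)$ to $(|\bk|, d)$ whose legs have positive first coordinate and slope in $[\nu_{\min}, \nu_{\max}]$. Since the first coordinates are positive integers summing to $|\bk|$, such a path has at most $|\bk|$ legs, each drawn from a finite set; hence $\CP$ is finite, and ``$p'$ lies weakly below $p$ and $p' \neq p$'' is a well-founded strict partial order on $\CP$. I would prove, by induction on $p \in \CP$ with respect to this order, the following statement: for \emph{any} ordered product of slope-homogeneous elements of $\UUm$ whose head-to-tail path is $p$, that product is a finite sum of products of the form \eqref{eqn:product p}, each of whose path lies below $p$. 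Combining this with the first paragraph and summing over the finitely many terms of the expansion then gives the proposition.

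For the induction: if the slopes $\nu_1 \leq \nu_2 \leq \cdots \leq \nu_m$ of the ordered product happen to be weakly increasing, I would group maximal runs of equal slope and use that each $\CB^-_\mu$ is an algebra to rewrite $c_1 \cdots c_m$ directly as a single product of the form \eqref{eqn:product p} whose path is $p$ itself — this is the base case. Otherwise there is an adjacent descent $\nu_a > \nu_{a+1}$, and Proposition \ref{prop:commute}, transported to the negative half along \eqref{eqn:iso halves} (which carries $\CB^+_\mu$ onto $\CB^-_\mu$), gives $c_a c_{a+1} \in \bigotimes^{\rightarrow}_{\mu \in [\nu_{a+1}, \nu_a]} \CB^-_\mu$, i.e.\ $c_a c_{a+1} = \sum_k \prod^{\rightarrow}_\mu d^{(k)}_\mu$ with $d^{(k)}_\mu \in \CB^-_\mu$. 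Substituting back expresses $c_1 \cdots c_m$ as a finite sum of ordered products $c_1 \cdots c_{a-1} \big( \prod^{\rightarrow}_\mu d^{(k)}_\mu \big) c_{a+2} \cdots c_m$, whose slopes still lie in $[\nu_{\min}, \nu_{\max}]$ and whose head-to-tail path $p_k$ is obtained from $p$ by replacing the two consecutive legs $(|\bk_{c_a}|, d_{c_a})$, $(|\bk_{c_{a+1}}|, d_{c_{a+1}})$ — which form a \emph{concave} corner lying strictly above the chord joining its endpoints, since $\nu_a > \nu_{a+1}$ — by the \emph{convex} sub-path of the $d^{(k)}_\mu$, which joins the same two endpoints and therefore lies on or below that chord. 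Hence $p_k$ lies weakly below $p$ and strictly below at the abscissa of the replaced corner, so $p_k < p$ in $\CP$; the inductive hypothesis applied to each resulting ordered product completes the step.

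The main obstacle is exactly that a single global application of Proposition \ref{prop:commute} to $c_1 \cdots c_m$ will not suffice: it only yields convex paths with slopes in $[\nu_{\min}, \nu_{\max}]$, and such a path need not lie below $p$, because $p$ itself may dip below the chord of its endpoints at a junction between two consecutive $\pi_s$. One is therefore forced to apply Proposition \ref{prop:commute} locally, at adjacent descents, and to carry the path along as a monovariant in the finite poset $\CP$. The supporting geometric facts — that a concave corner lies strictly above its chord, that a convex path lies on or below the chord of its endpoints, and that splicing a pointwise-lower sub-path into $p$ can only lower $p$ — are elementary; the care is in arranging them so that the monovariant visibly and strictly decreases at every reduction step, guaranteeing termination.
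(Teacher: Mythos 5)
Your proof is correct and follows essentially the same route as the paper's: after reducing to an ordered product of slope-homogeneous elements, you resolve adjacent slope descents one at a time via Proposition \ref{prop:commute} (transported to $\UUm$ by \eqref{eqn:iso halves}) and observe that each such swap replaces a concave corner by a convex sub-path lying strictly below it. The only cosmetic difference is the termination bookkeeping: the paper tracks the half-integer area between the current path and its convexification $p^\sharp$, whereas you invoke well-founded induction on the finite set of candidate paths with legs of slopes in $[\nu_{\min},\nu_{\max}]$ --- both are valid.
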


\medskip
\noindent
The notion ``lie below" in $\BZ^2$ naturally refers to having smaller than or equal second coordinate: we say that a path $p$ lies below a path $p'$ if for any $x \in \BR$, those $y$ such that $(x,y) \in p$ are smaller than or equal to those $y'$ such that $(x,y') \in p'$. In what follows, we only sketch the proof of Proposition \ref{prop:olivier}, and refer the reader to~\cite[Section 5]{S} for the full and original details of this kind of argument.

\begin{proof}
It suffices to assume that $a_s \in \CB_{\mu_s}^-$ for various $\mu_s \in \BQ$, so each $P_s$ consists of a single leg and thus $P$ consists of a single path $p$. If we have $\mu_s > \mu_{s+1}$ for some $s$ (which corresponds to two consecutive legs in $p$ which violate convexity), then we may use (the image under the isomorphism \eqref{eqn:iso halves} of) Proposition \ref{prop:commute} to write
\begin{equation}
\label{eqn:temp}
  a_s a_{s+1} = \sum (\pi \text{ as in \eqref{eqn:product p}})
\end{equation}
The $\pi$'s that appear in the right-hand side of \eqref{eqn:temp} correspond to convex paths with the same endpoints as the \underline{concave} $\text{path}(a_sa_{s+1})$. In particular, the aforementioned convex paths lie strictly below the aforementioned concave path and moreover the area between these convex and concave paths is a positive integer multiple of $\frac 12$. Therefore, if we plug the right-hand side of \eqref{eqn:temp} in the middle of the product
$$
  a_1 \dots a_s a_{s+1} \dots a_t
$$
then we are replacing a product corresponding to the path $p$ by a sum of products corresponding to paths lying below $p$. However, due to Proposition~\ref{prop:commute}, all of these paths still lie above the \emph{convexification} $p^\sharp$ of the path $p$,  which is the convex path obtained by reordering the segments of $p$ in increasing order of slope. Repeating this algorithm will produce paths $p'$ that are lower and lower down, but still bounded by $p^\sharp$ from below (since the convexification $p^\sharp$ lies below of any convexifications $(p')^\sharp$). The fact that the area between these paths and $p^\sharp$ decreases by a positive integer multiple of $\frac 12$ at every step means that the algorithm must terminate after finitely many steps.
\end{proof}


\medskip

\subsection{The completion}
\label{sub:completion}

We are now ready to define our completion of $\UUm$.

\medskip

\begin{definition}
\label{def:completion}
Consider the vector space
\begin{equation}
\label{eqn:completion}
  \wUUm \ = \bigoplus_{(\bk,d) \in \nn \times \BZ} \wUUm_{-\bk,-d}
\end{equation}
where
$$
  \wUUm_{-\bk,-d} \ = \prod^{\text{convex path }p}_{\text{of size } (|\bk|,d)} \UUm_p
$$
\end{definition}

\medskip

\begin{proposition}
\label{prop:completion is an algebra}
The algebra structure on $\UUm$ extends uniquely to an algebra structure on $\wUUm$.
\end{proposition}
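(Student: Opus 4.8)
The plan is to define the multiplication on $\wUUm$ factor-by-factor along paths and check convergence using Proposition \ref{prop:olivier}. Concretely, given two homogeneous elements $\xi \in \wUUm_{-\bk,-d}$ and $\eta \in \wUUm_{-\bl,-e}$, write them as (possibly infinite) sums $\xi = \sum_p \xi_p$ and $\eta = \sum_{p'} \eta_{p'}$, where $p$ ranges over convex paths of size $(|\bk|,d)$, $p'$ over convex paths of size $(|\bl|,e)$, and $\xi_p \in \UUm_p$, $\eta_{p'} \in \UUm_{p'}$. Each $\xi_p$ (resp. $\eta_{p'}$) is by definition a finite sum of products of the form \eqref{eqn:product p}; applying Proposition \ref{prop:olivier} to the concatenation of such a product for $\xi_p$ with such a product for $\eta_{p'}$, we see that $\xi_p \eta_{p'}$ is a finite sum of terms $\pi$ of the form \eqref{eqn:product p} whose path lies below the convexification of the concatenation of $p$ and $p'$. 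In particular $\xi_p \eta_{p'}$ lands in $\bigoplus_{p''} \UUm_{p''}$ where $p''$ ranges over the finitely many paths of size $(|\bk|+|\bl|, d+e)$ lying below that convexification.

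The key point for well-definedness is that, for each fixed target path $p''$ of size $(|\bk|+|\bl|,d+e)$, only finitely many pairs $(p,p')$ contribute a nonzero component of $\xi_p\eta_{p'}$ in $\UUm_{p''}$. This is where convexity does the work: since $\mathrm{path}(\pi)$ must lie below the convexification $q^\sharp$ of the concatenation $q$ of $p$ and $p'$, and the convexification only moves segments down, a contribution to $p''$ forces $p''$ to lie below $q^\sharp$, hence forces $q^\sharp$ (and therefore $q$, whose endpoints and total displacement are fixed) to lie within a bounded region above $p''$. Because $p$ and $p'$ are themselves convex, each is determined by its multiset of legs, and the boundedness of the region — together with the fact that $\UUm_{p''}$ is finite-dimensional by Lemma \ref{lem:path}, so only legs of bounded size can occur — leaves only finitely many admissible $(p,p')$. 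Therefore the formula
$$
  \xi \cdot \eta \ = \ \sum_{p''} \Big( \sum_{(p,p')} (\xi_p \eta_{p'})\big|_{\UUm_{p''}} \Big)
$$
is a well-defined element of $\wUUm_{-\bk-\bl,-d-e}$, with each inner sum finite; extending bilinearly over the grading gives the multiplication on $\wUUm$.

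It then remains to check that this multiplication is associative and restricts to the original multiplication on $\UUm \subset \wUUm$. The latter is immediate: an element of $\UUm$ is a finite sum over paths, and the formula above reduces to the ordinary product in that case. Associativity follows because both $(\xi\eta)\zeta$ and $\xi(\eta\zeta)$, when expanded along paths, equal the same (convergent, by the same finiteness argument applied to triple concatenations) sum $\sum (\xi_p \eta_{p'} \zeta_{p''})$ of genuine products in $\UUm$, reassociated — and associativity of the honest product in $\UUm$ applies termwise. The main obstacle is the combinatorial finiteness claim in the second paragraph: one must argue carefully that, with the size of the output path fixed and required to lie below a convexification, only finitely many convex input paths $p,p'$ are possible. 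Once that bounded-region argument is made precise — essentially, a convex path below a fixed convex path and sharing the same endpoints lies in a compact lattice region, and hence there are finitely many such paths, while the convexification of the concatenation of $p$ and $p'$ is forced into such a region — everything else is bookkeeping.
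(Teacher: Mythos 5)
Your overall strategy is the same as the paper's: expand the product of two infinite sums, control the paths of the resulting terms via Proposition \ref{prop:olivier}, and show that each fixed output path receives contributions from only finitely many pairs $(p,p')$ by trapping the relevant paths in a compact lattice region. However, the central finiteness step as you have written it rests on a misquotation of Proposition \ref{prop:olivier}, and the inequality you then exploit points the wrong way. The proposition asserts that $\mathrm{path}(\pi)$ lies below the \emph{concatenation} $q$ of $p$ and $p'$; from its proof one sees moreover that $\mathrm{path}(\pi)$ lies \emph{above} the convexification $q^\sharp$, not below it. So your premise that a contribution to $p''$ forces $p''$ to lie below $q^\sharp$ is false in general, and the correct inequality $q^\sharp \leq p''$ only bounds $q^\sharp$ from above by $p''$, which yields no finiteness at all (convex paths bounded above can still dip arbitrarily far down). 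The same reversal recurs in your closing remark: a convex path lying \emph{below} a fixed convex path with the same endpoints does not lie in a compact region; it is a convex path lying \emph{above} a fixed path (and hence, by convexity, below its chord) that does.

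The argument is repaired by using the statement of Proposition \ref{prop:olivier} as given: a contribution to $p''$ forces $p'' \leq q$, i.e.\ the concatenation $q$ is bounded below by $p''$; and $q$ is bounded above because $p$ and $p'$ are each convex with fixed endpoints, hence each lies below its own chord. Thus $q$ is confined to a compact lattice region and has at most $|\bk|+|\bl|$ legs, so only finitely many pairs $(p,p')$ can occur — each convex path being determined by its multiset of legs, as you note. This is precisely the content of the paper's proof, which phrases the same compactness argument contrapositively: for any $N$, all but finitely many convex paths of a fixed size contain a point with vertical coordinate $<-N$, so once $N$ exceeds the depth of the fixed path $p''$, the concatenation $q$ dips below $p''$ and cannot dominate it. Your remarks on associativity and on the restriction of the product to $\UUm$ (which the paper leaves implicit) are fine; the aside attributing the finiteness of admissible legs to the finite-dimensionality of $\UUm_{p''}$ is not needed once the bounded-region argument is stated correctly.
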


\medskip

\begin{proof}
We start by remarking that for any given path $p$ of size $(|\bk|,d)$, all but finitely many convex paths of the same size will lie below $p$. Let us argue by contradiction: suppose that infinitely many convex paths of size $(|\bk|,d)$ failed to lie below $p$. Then each of these infinitely many paths would need to pass through at least one of the finitely many lattice points contained inside the trapezoid bounded by the line segment $(0,0)$ to $(|\bk|,d)$, the two vertical lines with $x$-coordinate $0$ and $|\bk|$, and the horizontal line corresponding to the smallest $y$-coordinate of any point on the path $p$. It is clearly impossible for infinitely many convex paths of fixed size to pass through a fixed lattice point (other than the endpoints of the path).

\medskip
\noindent
With this in mind, our task is to show the well-definedness of a product
\begin{equation}
\label{eqn:the product}
  (\underbrace{\pi_1+\pi_2+\pi_3+\dots}_{\text{elements of }\UUm_{-\bk,-d}}) \cdot (\underbrace{\pi_1'+\pi_2'+\pi_3'+\dots}_{\text{elements of } \UUm_{-\bk',-d'}})
\end{equation}
where $\pi_s$ (respectively $\pi'_t$) correspond to different convex paths $p_s$ of size $(|\bk|,d)$ (respectively $p'_t$ of size $(|\bk'|,d')$). We may evaluate the product \eqref{eqn:the product} by foiling out the brackets and expressing each $\pi_s \pi_t'$ as a linear combination of products
\begin{equation}
\label{eqn:eph}
  \widetilde{\pi} \text{ as in \eqref{eqn:product p}}
\end{equation}
(cf.~\eqref{eqn:commute}). In order for such an infinite sum to be a well-defined element of $\wUUm$, one needs to show that each fixed $\widetilde{\pi}$ only appears for finitely many $(s,t)$. Let us analyze the possible paths $\widetilde{p} = \text{path}(\widetilde{\pi})$ that correspond to elements \eqref{eqn:eph}, and it suffices to show that any such path appears only for finitely many pairs $(s,t)$.

\medskip
\noindent
By Proposition \ref{prop:olivier}, any path $\widetilde{p}$ corresponding to a product $\widetilde{\pi}$ in \eqref{eqn:eph} must lie below the concatenation of $p_s$ with $p_t'$. However, by the argument in the first paragraph of the proof, for any $N \in \BN$ all but finitely many paths $p_s$ (respectively all but finitely many paths $p_t'$) contain some point with vertical coordinate $<-N$. Therefore, the same is true for the concatenation of $p_s$ and $p_t'$. By choosing $N$ large enough compared to any given path $\widetilde{p}$, we can ensure that $\widetilde{p}$ does not lie below the concatenation of $p_s$ with $p_t'$, except for finitely many pairs $(s,t)$.
\end{proof}

\medskip
\noindent
The proof of Proposition \ref{prop:completion is an algebra} also proves the following stronger statement.

\medskip

\begin{proposition}
\label{prop:completion strong}
For any $(\bk,d) \in \nn \times \BZ$, any countable sum of products
$$
  a_1 \dots a_t, \quad \text{where } a_s \in \wUUm_{-\bk_s,-d_s}, \forall s \in \{1,\dots,t\}, \sum_{s=1}^t \bk_s = \bk, \sum_{s=1}^t d_s = d
$$
also lies in $\wUUm$ as long as all but finitely many such products have the property that the (not necessarily convex) path with legs $(|\bk_1|,d_1), \dots, (|\bk_t|,d_t)$ lies below any given convex path of size $(|\bk|,d)$ in $\BN \times \BZ$.
\end{proposition}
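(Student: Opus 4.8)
The plan is to deduce Proposition~\ref{prop:completion strong} from the proof of Proposition~\ref{prop:completion is an algebra} by a reduction to binary products, noting that the essential mechanism — ``for any fixed convex path $\widetilde{p}$ of a given size, all but finitely many products contribute paths that fail to lie below the concatenation'' — is already in hand. First I would observe that it suffices to treat the case where each $a_s$ is a single product $\pi_s$ of the form~\eqref{eqn:product p}, since by~\eqref{eqn:completion} an arbitrary $a_s \in \wUUm_{-\bk_s,-d_s}$ is itself an (infinite) sum of such $\pi_s$, indexed by convex paths of size $(|\bk_s|,d_s)$; distributing over all these sums turns the countable sum of products $a_1\cdots a_t$ into a countable sum of products $\pi_1\cdots\pi_t$ of elements of the slope subalgebras, and the hypothesis on path-legs is preserved (the leg data $(|\bk_s|,d_s)$ only depends on the degree of $a_s$, not on which $\pi_s$ we pick). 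By Proposition~\ref{prop:olivier}, each product $\pi_1 \cdots \pi_t$ expands as a finite sum of products $\widetilde{\pi}$ as in~\eqref{eqn:product p} whose paths $\widetilde{p} = \text{path}(\widetilde{\pi})$ lie below the concatenation of the legs $(|\bk_1|,d_1),\dots,(|\bk_t|,d_t)$.

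The next step is the finiteness claim: each fixed $\widetilde{\pi}$ (equivalently, by Lemma~\ref{lem:path}, each fixed path $\widetilde{p}$ together with a basis vector of the finite-dimensional space $\UUm_{\widetilde{p}}$) appears in only finitely many of the products $\pi_1\cdots\pi_t$ in the sum. This is exactly where the hypothesis is used. Fix $\widetilde{p}$, and let $-N$ be the smallest vertical coordinate attained on $\widetilde{p}$. By assumption, all but finitely many of the products $\pi_1\cdots\pi_t$ in our countable sum have the property that their leg-path lies below \emph{every} convex path of size $(|\bk|,d)$; I would want the slightly stronger-looking but equivalent consequence that such a leg-path must dip below vertical coordinate $-N$ (this follows because, as in the first paragraph of the proof of Proposition~\ref{prop:completion is an algebra}, there are only finitely many convex paths of size $(|\bk|,d)$ that do \emph{not} dip below $-N$, and any leg-path lying below all of them in particular lies below one that stays above $-N$ only if it itself stays below it — here I should be a little careful and instead argue directly: a path lying below all convex paths of fixed size lies below the lowest one, which certainly reaches well below $-N$ once $N$ is fixed, since convex paths of a fixed size get arbitrarily low). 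Hence for all but finitely many $(\pi_1,\dots,\pi_t)$ the concatenation of their legs reaches below $-N$, so $\widetilde{p}$ — which never goes below $-N$ — cannot lie below that concatenation, and therefore $\widetilde{\pi}$ does not arise from it by Proposition~\ref{prop:olivier}. Thus $\widetilde{\pi}$ occurs with a well-defined (finite) total coefficient, and summing over all $\widetilde{p}$ of size $(|\bk|,d)$ produces a bona fide element of $\wUUm_{-\bk,-d}$.

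The main obstacle I anticipate is purely bookkeeping rather than conceptual: making precise the two-step passage from ``$a_s$'' to ``$\pi_s$'' to ``$\widetilde{\pi}$'' while controlling that the relevant finiteness is uniform. Concretely, one must check that after distributing, the countable collection of products $\pi_1\cdots\pi_t$ still satisfies the hypothesis in the form needed — i.e.\ that ``almost all'' in the statement (which is about the original products $a_1\cdots a_t$) correctly propagates to ``almost all'' of the refined products once we remember that each $a_s$ with $\bk_s \neq 0$ or $d_s \neq 0$ contributes infinitely many $\pi_s$. The point is that the leg $(|\bk_s|,d_s)$ is unchanged under refinement, so the leg-path of $\pi_1\cdots\pi_t$ equals that of $a_1\cdots a_t$; thus the hypothesis transfers verbatim, and no genuine new input is required beyond Proposition~\ref{prop:olivier} and the elementary lattice-geometry observation already established. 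I would close by remarking that this is exactly the statement needed downstream to make sense of infinite sums defining fused currents.
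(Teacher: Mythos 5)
Your argument is correct and is essentially the paper's: the paper offers no separate proof of this statement, merely noting that the proof of Proposition \ref{prop:completion is an algebra} carries over, and your reduction to single products $\pi_s$ via Proposition \ref{prop:olivier}, together with the observation that almost all leg-paths must dip below any fixed level $-N$, is exactly that adaptation. One small caution: the hypothesis must be read as ``for each convex path $p$ of size $(|\bk|,d)$, all but finitely many leg-paths lie below $p$'' --- no single path can lie below \emph{every} convex path of a given size (there is no lowest one, since convex paths of fixed size dip arbitrarily low), so your intermediate phrasing is vacuous as written, but your final deduction (choose one convex path dipping below $-N$ and apply the hypothesis to that path) uses the correct reading and goes through.
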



\medskip

\subsection{Quantum $\widehat{\fg}$}

We will now recall another completion of quantum affine algebras, that we will shortly connect with $\wUUm$ above. Let $\UUext$ be the standard central extension of $\UU$ with an extra central generator $C$ such that
\begin{equation*}
  \UU \simeq \UUext/(C-1)
\end{equation*}
We will not need the precise definition of the central extension, but observe that it does not affect the subalgebra $\UUm$, or its completion \eqref{eqn:completion}.
The algebra $\UUext$ admits an automorphism $\varpi$ defined on the generators via
\begin{equation}
\label{eq:varpi-autom}
  \varpi\colon e_{i,d}\mapsto f_{i,-d},\quad f_{i,d} \mapsto e_{i,-d},\quad \ph^\pm_{i,d'} \mapsto \ph^\mp_{i,d'}, \quad C\mapsto C^{-1}
\end{equation}
for all $i\in I, d\in \BZ, d'\in \BN$.
Furthermore, let $\UUaff$ denote the Drinfeld--Jimbo quantum group associated to the affinization of the Lie algebra $\fg$, which is generated~by
$$
  \big\{e_i,f_i,\ph^{\pm 1}_i\big\}_{i\in \wI} \quad \mathrm{with} \quad \wI=I\sqcup 0
$$
A result of Drinfeld (proved by Beck, Damiani) establishes an algebra isomorphism
\begin{equation}
\label{eqn:iso beck}
  \Phi\colon \UUext \iso \UUaff
\end{equation}
Let $\UUaffp$, $\UUaffm$, $\UUaffh$ denote the subalgebras of $\UUaff$ generated by $e_i$, $f_i$, $\ph^\pm_i$, respectively. Let $\UUaffgl$ denote the subalgebras generated by $\UUaffpm$ and $\UUaffh$. According to~\cite{B} (respectively~\cite{KT2}), the subalgebras $\UUaffpm$ admit PBW bases in the root vectors $\{e_{\pm \gamma}\}_{\gamma\in \wDelta^+}$\footnote{For imaginary roots $\gamma$, we actually have $|I|$ root vectors $\{e^{(i)}_\gamma\}_{i\in I}$ instead of a single $e_\gamma$.} (termed \emph{Cartan--Weyl basis} in~\cite{KT2}) constructed through Lusztig's braid group action (respectively, via iterated $q$-commutators in~\cite{KT2}, for every normal ordering of $\wDelta^+$) where $\wDelta^+$ denotes the set of positive affine roots.

\medskip

\begin{claim}\label{claim:deep}
The root generators $e_\gamma$ with $\gamma\in \wDelta^+$ (respectively $-\gamma\in \wDelta^+$) can be expressed as non-commutative polynomials in $\{e_i\}_{i\in \wI}$ (respectively $\{f_i\}_{i\in \wI}$) of degree $\height(\gamma)$, with the height of an affine root defined by $\height(\sum_{i\in \wI} r_i\alpha_i)=\sum_{i\in \wI} r_i$.
\end{claim}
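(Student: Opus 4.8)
The plan is to induct on $\height(\gamma)$, building the root vectors up from the simple ones by iterated $q$-commutators, and tracking the degree (in the free algebra on the Chevalley generators) at each step. The base case is $\height(\gamma)=1$: then $\gamma=\alpha_i$ for some $i\in\wI$ and $e_{\gamma}=e_i$ (up to normalization), which is a degree-$1$ polynomial in the $\{e_i\}_{i\in\wI}$. For the inductive step, fix a normal ordering of $\wDelta^+$; by the construction of \cite{KT2} (the Cartan–Weyl basis via iterated $q$-commutators), or equivalently by the Levendorskii–Soibelman type relations \eqref{eqn:commutation intro}--\eqref{eqn:commutation intro minimal} applied in the affine setting, any non-simple $\gamma\in\wDelta^+$ can be written as $\gamma=\gamma_1+\gamma_2$ with $\gamma_1<\gamma<\gamma_2$ a minimal pair of positive affine roots, and
$$
  e_{\gamma}\ \in\ \BZ[q,q^{-1}]^{\times}\cdot\big(e_{\gamma_1}e_{\gamma_2}-q^{(\gamma_1,\gamma_2)}e_{\gamma_2}e_{\gamma_1}\big).
$$
Since $\height(\gamma_1),\height(\gamma_2)<\height(\gamma)$ and $\height(\gamma_1)+\height(\gamma_2)=\height(\gamma)$, the inductive hypothesis expresses each $e_{\gamma_s}$ as a non-commutative polynomial of degree $\height(\gamma_s)$ in $\{e_i\}_{i\in\wI}$, and the $q$-commutator above is then a non-commutative polynomial of degree $\height(\gamma_1)+\height(\gamma_2)=\height(\gamma)$. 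The statement for $-\gamma\in\wDelta^+$ follows by applying the automorphism $\varpi$ of \eqref{eq:varpi-autom} (or rather $\Phi\circ\varpi\circ\Phi^{-1}$), which exchanges $\UUaffp$ and $\UUaffm$ and sends $e_i\mapsto f_i$ up to the grading shift $d\mapsto -d$ that does not affect the affine Chevalley generators.

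For imaginary roots $\gamma=n\delta$ one argues identically: each of the $|I|$ vectors $e^{(i)}_{n\delta}$ is produced in \cite{KT2} by a $q$-commutator of a real positive root vector with a real negative-of-a-positive root vector whose heights sum to $\height(n\delta)=nh^{\vee}$ wait — more precisely, $e^{(i)}_{n\delta}$ is obtained as a $q$-bracket $[e_{\gamma'},e_{\gamma''}]_q$ with $\gamma'+\gamma''=n\delta$, $\gamma',\gamma''\in\wDelta^+$ real, so that $\height(\gamma')+\height(\gamma'')=\height(n\delta)$, and the inductive estimate goes through verbatim. The only point requiring care is the bookkeeping of which minimal-pair decomposition to use for a given normal ordering; but for the purposes of the claim any valid decomposition suffices, since we only need the existence of \emph{some} expression of the asserted degree.

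The main obstacle is verifying that the affine analogue of the Levendorskii–Soibelman minimal-pair formula \eqref{eqn:commutation intro minimal} holds for the chosen normal ordering of $\wDelta^+$ with \emph{no lower-order correction terms}, i.e.\ that the $q$-commutator of the two root vectors in a minimal pair is exactly a scalar multiple of $e_{\gamma}$ rather than $e_{\gamma}$ plus products of other root vectors. In the finite-type case this is \eqref{eqn:commutation intro minimal}; in the affine case it is part of the content of \cite{B, KT2}, and in fact one does not strictly need the sharp "no correction terms" statement — even the weaker inclusion \eqref{eqn:commutation intro} suffices, because every correction term $e_{\delta_1}\cdots e_{\delta_t}$ there satisfies $\sum_s\delta_s=\gamma_1+\gamma_2=\gamma$, hence $\sum_s\height(\delta_s)=\height(\gamma)$, so each such monomial is again a non-commutative polynomial of degree $\height(\gamma)$ in the Chevalley generators by induction. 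Thus the degree bound is robust, and the claim reduces to the PBW/Cartan–Weyl structure theory already cited.
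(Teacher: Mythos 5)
The paper gives no proof of this Claim at all: it is presented as an immediate consequence of the cited constructions of Beck and Khoroshkin--Tolstoy via iterated $q$-commutators, and your induction is exactly the natural filling-in of that reasoning, so your route is consistent with the paper's (implicit) one and is correct. That said, the observation in your last paragraph can be promoted to a one-line proof that makes the minimal-pair bookkeeping, the correction-term discussion, and the separate treatment of imaginary roots unnecessary: $\UUaffp$ is graded by the root lattice with $\deg e_i = \alpha_i$, and $e_\gamma$ is a weight vector of weight $\gamma$ (e.g.\ because $T_{i}$ maps the weight-$\mu$ space to the weight-$s_{i}(\mu)$ space), so every monomial $e_{i_1}\cdots e_{i_k}$ occurring in \emph{any} expression of $e_\gamma$ satisfies $\alpha_{i_1}+\dots+\alpha_{i_k}=\gamma$ and hence $k=\height(\gamma)$; the only substantive input is that $e_{\pm\gamma}$ lies in $\UUaffpm$, which is part of the cited PBW statements. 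One small caveat on your argument as written: the map $\Phi\circ\varpi\circ\Phi^{-1}$ does not obviously act as $e_i\mapsto f_i$ on the Drinfeld--Jimbo Chevalley generators for $i\in\wI$ (one would need to track what $\varpi$ does to $e_0,f_0$ through $\Phi$), so for the negative half it is cleaner to either run the same induction with the $f_i$'s directly (the constructions of \cite{B,KT2} are symmetric) or use the Cartan involution of $\UUaff$ in its Drinfeld--Jimbo presentation.
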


\medskip
\noindent
Consider the $\BZ$-grading of $\UUaffpm$ given by $\height$, i.e.\ setting $\deg e_i = 1$ and $\deg f_i = -1$ for all $i \in \wI$. With this in mind, we define the completion
\begin{equation}
\label{eqn:second completion}
  \wUUaff
\end{equation}
where a basis of neighborhoods of the identity are images under multiplication of
\begin{equation}
\label{eqn:neighborhoods of zero}
\UUaffp_{\geq N} \otimes \UUaffh \otimes \UUaffm_{\leq -N}
\end{equation}
as $N$ ranges over the natural numbers. Therefore, elements in the completion \eqref{eqn:second completion} are infinite sums of products, all but finitely many of which are of the form
$$
  \Big(\text{at least }N \text{ factors } e_i  \Big)_{i \in \wI} \cdot \Big(\text{anything}\Big) \cdot \Big(\text{at least } N \text{ factors }  f_i \Big)_{i \in \wI}
$$

\medskip

\begin{remark}\label{rem:DK-completion}
This completion is closely related to the one considered in~\cite{DK2}, which was defined to consist of infinite sums
$$
  \UUaffh \cdot e_{-\gamma}^{n_\gamma} \dots e_{-\beta}^{n_\beta} e_{-\alpha}^{n_\alpha} e_{\alpha}^{m_\alpha} e_{\beta}^{m_\beta} \dots e_{\gamma}^{m_\gamma}
$$
where $\alpha<\beta<\dots<\gamma$ with respect to the above normal ordering, such that for any weight $\lambda$ and any $N\in \BZ$, there are only finitely many terms of total weight $\lambda$ which satisfy the condition $(n_\alpha+m_\alpha)\height(\alpha) + (n_\beta+m_\beta)\height(\beta) + \dots + (n_\gamma+m_\gamma)\height(\gamma) \leq N$.
\end{remark}


\medskip

\subsection{The two completions}

We will now connect the completion $\wUUm$ of the subalgebra $\UUm$ of the left-hand side of \eqref{eqn:iso beck} with the completion \eqref{eqn:second completion} of the right-hand side. Our main result on this matter is the following.

\medskip

\begin{proposition}
\label{prop:completions compatible}
The map \eqref{eqn:iso beck} induces an injective algebra homomorphism
\begin{equation}
\label{eqn:completions compatible}
  \Phi\colon \wUUm \longrightarrow \wUUaff
\end{equation}
\end{proposition}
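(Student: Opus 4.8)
The plan is to take an element of $\wUUm$, apply $\Phi$ term by term, and show that the resulting infinite sum of products of $f_i$'s (for $i \in \wI$) converges in $\wUUaff$, i.e.\ that all but finitely many summands lie in the neighborhood $\UUaffp_{\geq N} \cdot \UUaffh \cdot \UUaffm_{\leq -N}$ of the identity, for every $N$. The first step is to understand $\Phi$ on a single product $\pi = \prod^{\rightarrow}_{\mu} b_\mu$ of the form \eqref{eqn:product p}. Each $b_\mu \in \CB^-_\mu$ is a fixed element of $\UUm$ (independent of the completion), so $\Phi(b_\mu)$ is a fixed element of $\UUaff$; writing it in the PBW (Cartan-Weyl) basis of $\UUaffpm$ via Claim \ref{claim:deep}, one sees that $\Phi(\pi)$ is a finite $\BQ(q)$-linear combination of monomials in the affine root vectors, and hence — again by Claim \ref{claim:deep}, expressing each root vector $e_{-\gamma}$ as a non-commutative polynomial of degree $\height(\gamma)$ in the $f_i$'s — a finite combination of monomials in $\{f_i\}_{i\in\wI}$. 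The key quantitative point is to relate the $\height$-degree of these monomials to the geometry of the path $p = \text{path}(\pi)$: a leg $(|\bk_\mu|,d_\mu)$ of $p$ produces, under $\Phi$, a contribution whose $\height$-degree (counting $f_i$ for $i \in I$ with weight $+1$ and $f_0$ with weight $+1$ as well) grows like a linear function of $|\bk_\mu|$ and $d_\mu$ — roughly $\height = |\bk| \cdot(\text{something}) - d$, reflecting that the imaginary root direction $\delta = \alpha_0 + \theta$ carries $\height = \height(\theta)+1$ and the affine grading mixes the loop grading $d$ with the finite height. So the deeper a point of the path $p$ sits (more negative vertical coordinate), the larger the $\height$-degree of the corresponding monomials in the $f_i$'s, i.e.\ the deeper inside the filtration \eqref{eqn:neighborhoods of zero}.

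With this dictionary in hand, the second step is to assemble the full sum. An element of $\wUUm_{-\bk,-d}$ is a sum $\sum_s \pi_s$ over convex paths $p_s$ of size $(|\bk|,d)$, one $\UUm_{p_s}$-component for each $p_s$ (and each $\UUm_{p_s}$ is finite-dimensional by Lemma \ref{lem:path}, so within a fixed path there is no convergence issue). By the first paragraph of the proof of Proposition \ref{prop:completion is an algebra}, for any $N$ all but finitely many convex paths $p_s$ of size $(|\bk|,d)$ pass through a point of vertical coordinate $< -N$; by the height estimate of Step 1, the image $\Phi(\pi_s)$ of such a $\pi_s$ is then a combination of monomials in the $f_i$'s of $\height$-degree $> N'$ (with $N' \to \infty$ as $N \to \infty$), hence lies in $\UUaffh \cdot \UUaffm_{\leq -N'} \subset \UUaffp_{\geq N'}\cdot\UUaffh\cdot\UUaffm_{\leq -N'}$. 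Thus $\sum_s \Phi(\pi_s)$ converges in $\wUUaff$, defining the map \eqref{eqn:completions compatible} on each graded piece, and hence on all of $\wUUm$.

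The third step is to check that $\Phi$ is an algebra homomorphism and is injective. Multiplicativity is automatic on $\UUm$ itself (it is the restriction of \eqref{eqn:iso beck}), and it passes to the completions provided the relevant infinite sums can be rearranged — which is exactly the content of Proposition \ref{prop:completion strong} on the $\wUUm$ side (a countable sum of products whose leg-paths eventually lie below any fixed convex path still lies in $\wUUm$) together with the fact that the topology on $\wUUaff$ is that of a linearly topologized algebra, so products of convergent sums converge to the product. Injectivity follows because $\Phi|_{\UUm}$ is injective (it is the restriction of the isomorphism \eqref{eqn:iso beck}), the grading by $(\bk,d)$ is preserved, and on each graded piece the path components $\UUm_p$ are sent to linearly independent families (their images have distinct leading $\height$-behavior, or more simply: a nonzero element of $\wUUm_{-\bk,-d}$ has a lowest path $p$ in its support with $\Phi(\pi_p)\neq 0$ not cancelled by images of strictly-lower paths, by the height estimate). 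I expect the main obstacle to be Step 1 — making the correspondence between path geometry (vertical depth) and $\height$-degree in the $f_i$'s precise and uniform, since this requires controlling how Lusztig's / the Khoroshkin-Tolstoy PBW root vectors for $\UUaff$ decompose under $\Phi$ across \emph{all} slopes $\mu$ simultaneously, not just for a single product; the inequality one needs is essentially that the $\height$-degree of $\Phi(b_\mu)$ for $b_\mu\in\CB^-_{\bk_\mu,-d_\mu}$ is bounded below by a fixed affine-linear function of $(|\bk_\mu|,d_\mu)$ with negative $d_\mu$-coefficient, and verifying this uniformly is the technical heart of the argument.
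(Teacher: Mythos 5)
There is a genuine gap, and it sits exactly where you predicted the ``technical heart'' would be: your Step 1 dictionary rests on an incorrect premise about what $\Phi$ does to $\UUm$. You treat $\Phi(\pi)$ as a combination of monomials in $\{f_i\}_{i\in\wI}$ alone, and conclude that a deep convex path produces an element of $\UUaffh\cdot\UUaffm_{\leq -N'}$, which you then place inside the neighborhood $\UUaffp_{\geq N'}\cdot\UUaffh\cdot\UUaffm_{\leq -N'}$. Both steps fail. First, the Beck--Damiani isomorphism does \emph{not} send the loop-negative half $\UUm$ into the Kac--Moody negative half: one has $\Phi(f_{i,d})\in\UUaffm_{\leq -1}$ only for $d\leq 0$, while $\Phi(f_{i,d})\in\UUaffg_{\geq 1}$ for $d>0$, i.e.\ the image involves the $e_j$'s for $j\in\wI$. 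Second, even granting your description, $\UUaffh\cdot\UUaffm_{\leq -N'}$ is \emph{not} contained in $\UUaffp_{\geq N'}\cdot\UUaffh\cdot\UUaffm_{\leq -N'}$ for $N'\geq 1$: membership in the latter requires a left factor of height degree at least $N'$ built from $e$'s, so piling up $f$'s alone never takes you deeper into the filtration \eqref{eqn:neighborhoods of zero}.

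The actual mechanism, which the paper's proof makes precise, uses both Borel halves of $\UUaff$ and is where convexity of the path genuinely enters. There are only finitely many nonzero ``fundamental'' slope pieces $\CB_{-\bk',-d'}$ with $0\leq d'<|\bk'|$ and $\bk'\leq\bk$, each generated by $f_{i,d}$ with $|d|\leq M(\bk)$; transporting these by the shift automorphism $f_{i,d}\mapsto f_{i,d-1}$ shows that legs of very positive slope $\mu=d'/|\bk'|$ are built from $f_{i,d}$ with $d\leq 0$ and hence land in $\UUaffm$ with very negative height degree, while legs of very negative slope are built from $f_{i,d}$ with $d\geq 1$ and land in $\UUaffg$ with very large positive height degree (the height degree of $\Phi(\CB_{-\bk',-d'})$ being $-|\bk'|-d'\ell$). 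Since a convex path is ordered by increasing slope, the very-negative-slope legs come first and the very-positive-slope legs come last, so $\Phi(\pi)$ has precisely the shape $(\text{many }e_i)\cdot(\text{anything})\cdot(\text{many }f_i)$; a path dipping below $-N$ forces \emph{both} a steep initial descent and a steep final ascent, supplying the two outer factors simultaneously. Your Steps 2 and 3 (quoting the first paragraph of the proof of Proposition \ref{prop:completion is an algebra} for finiteness, and compatibility of the two filtrations for injectivity) are in the right spirit and survive once Step 1 is repaired along these lines, but as written the argument does not go through.
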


\medskip

\begin{proof}
We recall the fact (see~\cite{B2,Da}) that the isomorphism \eqref{eqn:iso beck} satisfies
$$
  \Phi(f_{i,d})\in
  \begin{cases}
    \UUaffm_{\leq -1} &\text{if }d \leq 0 \\
    \UUaffg_{\geq 1} &\text{if }d > 0
  \end{cases}
$$
In order to conclude the existence of a homomorphism \eqref{eqn:completions compatible}, we need to show that infinite sums of \eqref{eqn:product p} corresponding to convex paths lie in the completion \eqref{eqn:second completion}. To this end, let us fix a degree $\bk \in \nn$, and consider the \underline{finitely many} subspaces
\begin{equation}
\label{eqn:the subspaces}
  \CB_{-\bk',-d'} \neq 0
\end{equation}
with $0 \leq d' < |\bk'|$ and $\bk' \leq \bk$. All the subspaces \eqref{eqn:the subspaces} are finite-dimensional, so we may conclude that all their elements are written as sums of products of $f_{i,d}$'s with $d \in [-M,M]$ for some large enough natural number $M = M(\bk)$. Because of this, and the fact that the shift automorphism $\{f_{i,d} \mapsto f_{i,d-1}\}_{i \in I}^{d\in \BZ}$ sends $\CB_{-\bk',-d'} \iso \CB_{-\bk',-d'-|\bk'|}$, we have
$$
  \CB_{-\bk',-d'} \subset
  \begin{cases}
    \Big \langle f_{i,0},f_{i,-1},\dots \Big \rangle_{i \in I}  \subseteq \UUaffm &\text{if }  d' \geq M|\bk'| \\
    \Big \langle f_{i,1},f_{i,2},\dots \Big \rangle_{i \in I} \subseteq \UUaffg &\text{if } d' < -M|\bk'|
  \end{cases}
$$
Since the degree by height of $\Phi(\CB_{-\bk',-d'})\subset \UUaff$ is equal to $-|\bk'|-d'\ell$, where $\ell \in \BZ_{>0}$ denotes the height of the minimal imaginary root $\delta$, and $\bk' \leq \bk$, we conclude that the subalgebras $\CB_{-\bk',-d'}$ correspond to elements of $\UUaff$ as follows
$$
  \begin{cases}
    \text{of degree}\leq O(-\mu) &\text{if }\mu\geq M\\
    \text{of bounded degree } & \text{if } \mu \in [-M ,M) \\
    \text{of degree} \geq O(-\mu) &\text{if } \mu < - M
  \end{cases}
$$
where $\mu = \frac {d'}{|\bk'|}$ (above and below, ``$O(\mu)$" refers to $a\mu+b$ for some $a>0$, $b \in \BR$). For any $N \in \BN$, we may subdivide a convex path into ``segments of slope $\leq O(-N)$", ``segments of intermediate slope" and ``segments of slope $\geq O(N)$". In doing so, we can ensure that any product \eqref{eqn:product p} give rise to an element of the form \eqref{eqn:neighborhoods of zero} in $\UUaff$. This implies both that the map $\Phi$ is well-defined and that it is injective, because the slope (which measures the depth of the neighborhoods of 0 in the completion \eqref{eqn:completion}) is linear in the number $N$ (which measures the depth of the neighborhoods of zero in the completion \eqref{eqn:second completion}).
\end{proof}

\medskip
\noindent
Although it is inconsequential for the remainder of the present paper, we make the following conjecture, which will be proved in~\cite{N24}.

\medskip

\begin{conjecture}
\label{conj:completions compatible}
The map $\Phi$ of~\eqref{eqn:iso beck} sends the slope subalgebras as follows
\begin{align*}
  & \Phi(\CB_{\mu}^+) \subset \UUaffp , \qquad \text{if } \mu \geq 0 \\
  & \Phi(\CB_{\mu}^+) \subset \UUaffl , \qquad \text{if } \mu < 0  \\
  & \Phi(\CB_{\mu}^-) \subset \UUaffm , \qquad \text{if } \mu \leq 0 \\
  & \Phi(\CB_{\mu}^-) \subset \UUaffg , \qquad \text{if } \mu > 0
\end{align*}
\end{conjecture}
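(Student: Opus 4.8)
The plan is to reduce all four inclusions to the two statements about $\CB^+_\mu$, and for those to isolate the routine case $\mu\ge 0$ from the substantive case $\mu<0$. For the reduction, note that the automorphism $\varpi$ of~\eqref{eq:varpi-autom} satisfies $\Phi\circ\varpi=\omega\circ\Phi$, where $\omega$ is the Chevalley-type involution of $\UUaff$ interchanging $\UUaffp\leftrightarrow\UUaffm$ and $\UUaffg\leftrightarrow\UUaffl$ and fixing $\UUaffh$; since $\varpi$ acts on Drinfeld generators by $e_{i,d}\mapsto f_{i,-d}$, it carries each $\CB^+_\mu$ onto the matching $\CB^-$ (with a sign on $\mu$ fixed by our grading conventions), so the two $\CB^-$-inclusions follow from the two $\CB^+$-inclusions. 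For later use, applying $\varpi$ to the Beck--Damiani fact recalled in the proof of Proposition~\ref{prop:completions compatible} yields $\Phi(e_{i,d})\in\UUaffp$ for $d\ge 0$ and $\Phi(e_{i,d})\in\UUaffm$ for $d<0$; in fact, for $d\ge 0$ the element $\Phi(e_{i,d})$ is, up to a scalar, the positive real affine root vector of $\alpha_i+d\delta$, and for $d<0$ the negative real affine root vector of $-(|d|\delta-\alpha_i)$.

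For $\mu\ge 0$, the claim is that $\CB^+_\mu$ lies in the subalgebra of $\UUp$ generated by the non-negative modes $\{e_{i,d}\}_{i\in I}^{d\ge 0}$; call this subalgebra $\UUp_{\flat}$. Granting this, $\Phi(e_{i,d})\in\UUaffp$ for $d\ge 0$ together with the fact that $\UUaffp$ is a subalgebra gives $\Phi(\UUp_\flat)\subseteq\UUaffp$, hence $\Phi(\CB^+_\mu)\subseteq\UUaffp$. To prove $\CB^+_\mu\subseteq\UUp_\flat$, I would transport to $\CS$ via $\Upsilon$ and check --- by essentially the rescaling analysis behind Definition~\ref{def:slope} --- that $\Upsilon(\UUp_\flat)$ equals the ``slope $\ge 0$'' subalgebra $\bigotimes_{\mu\ge 0}^{\rightarrow}\CB^+_\mu$ of~\eqref{eqn:factorization 1}: a shuffle element whose leading exponent under rescaling any subset of its variables to infinity is nonnegative per rescaled variable is exactly one that can be written using nonnegative modes, and every individual $\CB^+_\mu$ with $\mu\ge 0$ is one of the tensor factors. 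At $\mu=0$ this identifies $\CB^+_0$ with the copy of $\uup$ in $\UU$ generated by $\{e_{i,0}\}_{i\in I}$, so $\Phi(\CB^+_0)\subseteq\uup\subseteq\UUaffp$. Since the degree-shift automorphism $\tau\colon e_{i,d}\mapsto e_{i,d+1}$ of $\CS$ satisfies $\tau(\CB^+_\mu)=\CB^+_{\mu+1}$ and $\Phi\tau^{-n}\Phi^{-1}$ sends $e_i=\Phi(e_{i,0})$ to $\Phi(e_{i,-n})\in\UUaffm$ for $n\ge 1$, we also obtain $\Phi(\CB^+_{-n})=(\Phi\tau^{-n}\Phi^{-1})(\uup)=\big\langle\Phi(e_{i,-n})\big\rangle_{i\in I}\subseteq\UUaffm$, which disposes of all \emph{integral} negative slopes.

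The remaining --- and hardest --- case is $\mu<0$ with $\mu\notin\BZ$. Now $\CB^+_\mu$ is no longer contained in $\UUp_\flat$, and $\Phi\tau^{-1}\Phi^{-1}$ does not preserve $\UUaffp$, so the statement genuinely requires that certain products of positive and negative affine root vectors cancel down into $\UUaffl$. The route I would pursue is to use an explicit generating set of $\CB^+_\mu$, namely the ``primitive'' shuffle elements of bidegree $(\bk_0,d_0)$ with $d_0/|\bk_0|=\mu$ and $|\bk_0|$ minimal, together with their degree shifts; these are iterated $q$-commutators of the $e_{i,d}$, so after applying $\Phi$ and reorganizing with the Levendorskii--Soibelman-type relations of $\UUaff$ attached to the Auslander--Reiten convex order, one must verify that each $e_i=\Phi(e_{i,0})$ that appears gets absorbed into the negative part --- which should follow from the fact that the pertinent weight differences $\alpha_i+\alpha_j+\dots-m\delta$ are negative affine roots precisely by the convexity/AR combinatorics that this paper is built around. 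A more conceptual alternative would be to show that, under $\Phi$ and $\Upsilon$, the shuffle pairing~\eqref{eqn:pairing} matches (up to normalization) the Drinfeld pairing on $\UUaff$, for which $\UUaffl$ is the orthogonal complement of $\UUaffp_{\ge 1}$; one could then read off $\Phi(\CB^+_\mu)\subseteq\UUaffl$ from the orthogonality of $\CB^+_\mu$ to $\Phi^{-1}(\UUaffp_{\ge 1})$, tested slope-by-slope via Proposition~\ref{prop:pair slopes}. In either approach, controlling these cancellations for non-integral negative slopes is the crux.
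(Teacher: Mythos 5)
The statement you are proving is labeled as a \emph{conjecture} in the paper, and the authors explicitly decline to prove it (``Although it is not important to the present paper, we propose the following''). So there is no proof in the paper to compare yours against; any complete argument here would be new content. Judged on its own terms, your proposal is not yet a proof, and you essentially say so yourself: the case of non-integral negative slope is flagged as ``the crux'' and is only addressed by two candidate strategies, neither of which is carried out. That case is genuinely where the content of the conjecture lives --- for $\mu\in\BQ_{<0}\setminus\BZ$ the images $\Phi(e_{i,d})$ of the Drinfeld generators entering a given element of $\CB^+_\mu$ land in both $\UUaffp$ and $\UUaffm$, and the assertion $\Phi(\CB^+_\mu)\subset\UUaffl$ requires showing that all the strictly positive affine root vectors cancel out of the product. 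Neither the Levendorskii--Soibelman reorganization nor the pairing/orthogonality route is developed far enough to see why this cancellation happens, so the gap is not a technicality but the main point.

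Two further steps you treat as routine also need justification. First, the claim that for $\mu\geq 0$ one has $\CB^+_\mu$ inside the subalgebra of $\UUp$ generated by $\{e_{i,d}\}_{i\in I}^{d\geq 0}$ (equivalently, that this subalgebra maps under $\Upsilon$ onto $\bigotimes^{\rightarrow}_{\mu\geq 0}\CB^+_\mu$, and in particular that $\CB^+_0$ is the image of $\uup$) is plausible and consistent with analogous results in the literature, but it is not proved in this paper and your ``rescaling analysis'' is only a one-sentence sketch; the containment of a slope subalgebra in the span of products of specified modes is exactly the kind of statement that requires the PBW-type bookkeeping of \eqref{eqn:factorization 1}, not just a degree count. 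Second, the reduction of the $\CB^-_\mu$ statements to the $\CB^+_\mu$ ones via $\varpi$ requires the intertwining $\Phi\circ\varpi=\omega\circ\Phi$ with a Chevalley-type involution $\omega$ of $\UUaff$ exchanging $\UUaffg\leftrightarrow\UUaffl$; this is true for the Beck--Damiani isomorphism but should be cited or checked on generators, since the conjecture's asymmetry in the sign of $\mu$ (strict versus non-strict inequalities at $\mu=0$) is exactly the kind of detail such an involution argument can scramble. In short: the architecture of your reduction is reasonable and matches how one would expect this conjecture to eventually be proved, but as written it establishes at most the integral-slope and nonnegative-slope cases modulo an unproven lemma, and leaves the essential case open.
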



\medskip

\subsection{The braid group action}

Although they do not use the language of completions (but point out the correct completion in their other paper~\cite{DK2}, see Remark~\ref{rem:DK-completion}) and instead work in a certain class of admissible representations,~\cite{DK} defined an action of the braid group of type $\fg$, generated by $\{T_i\}_{i \in I}$, on the completion \eqref{eqn:second completion}.
The authors of~\cite{DK} work only with the simply-laced $\fg$, in which case the explicit formulas for $T_i$ can be read off from~\cite[(1.15)--(1.21), (2.1)--(2.5)]{DK}:
\begin{itemize}

\item[$\bullet$]
if $a_{ij}=0$, then for any $d\in \BZ, d'\in \BN$
\begin{equation}\label{eq:non-adjacent}
  T_i(e_{j,d})=e_{j,d}, \qquad T_i(f_{j,d})=f_{j,d}, \qquad T_i(\ph^\pm_{j,d'})=\ph^\pm _{j,d'}
\end{equation}

\item[$\bullet$]
if $a_{ij}=-1$, then for any $d\in \BZ, d'\in \BN$
\begin{equation}\label{eq:adjacent}
\begin{split}
  & T_i(e_{j,d}) = q^{2d} \left( -e_{j,d}e_{i,0} + q^{-1}e_{i,0}e_{j,d} - (q-q^{-1})\sum_{k\geq 1} q^{-k} e_{i,k} e_{j,d-k} \right) , \\
  & T_i(f_{j,d}) = q^{2d} \left( f_{i,0}f_{j,d} - qf_{j,d}f_{i,0} - (q-q^{-1})\sum_{k\geq 1} q^{k} f_{j,d+k}f_{i,-k} \right) , \\
  & T_i(\ph^\pm_{j,d'}) = \sum_{k=0}^{d'} q^{\mp(k-2d')} \ph^\pm_{i,k} \ph^\pm_{j,d'-k}
\end{split}
\end{equation}

\item[$\bullet$]
if $j=i$, then for any $d\in \BZ, d'\in \BN$
\begin{equation}\label{eq:same-vertices}
\begin{split}
  & T_i(e_{i,d}) = \sum_{k\geq 0} \bph^+_{i,k}f_{i,d-k}C^{2d-k}, \\
  & T_i(f_{i,d}) = \sum_{k\geq 0} e_{i,d+k}\bph^-_{i,k}C^{2d+k}, \\
  & T_i(\ph^\pm_{i,d'}) = \bph^\pm_{i,d'},
\end{split}
\end{equation}
where $\{\bph^\pm_{i,k}\}_{i\in I}^{k\geq 0}$ are defined so that
$\bph^\pm_i(z)=\sum_{k\geq 0} \bph^\pm_{i,k} z^{\mp k} = \left(\ph^\pm_i(z)\right)^{-1}$.

\end{itemize}
where we applied the automorphism $\varpi$ of~\eqref{eq:varpi-autom}, as our $e_{i,d}, f_{i,d}$ are $f_{i,-d}, e_{i,-d}$ of \emph{loc.~cit.}
With this in mind, we propose the following reinterpretation of the main result of~\cite{DK}.

\medskip

\begin{proposition}
The maps $T_i$ of~(\ref{eq:non-adjacent})--(\ref{eq:same-vertices}) give rise to well-defined automorphisms
$$
  \wUUaff \iso \wUUaff
$$
which induce an action of the braid group on $\wUUaff$.
\end{proposition}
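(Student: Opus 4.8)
The plan is to verify, in turn, that each of the formulas \eqref{eq:non-adjacent}--\eqref{eq:same-vertices} produces a well-defined element of $\wUUaff$, that the resulting assignment on the Drinfeld generators extends to a continuous algebra endomorphism of $\wUUaff$, that the same holds for the inverse operators so that one in fact obtains automorphisms, and finally that the braid relations of type $\fg$ hold. The first two points rest on the very same height bookkeeping that was carried out in the proof of Proposition \ref{prop:completions compatible}. One first records how \eqref{eqn:iso beck} places the Drinfeld generators relative to the height grading: arguing for the $e_{i,d}$ and $\ph^\pm_{i,d'}$ exactly as was done for the $f_{i,d}$ in \loccit{} (and invoking Claim \ref{claim:deep} together with the shift automorphism, which moves height by $\ell=\height(\delta)$), one finds that $|\height(\Phi(e_{i,d}))|$ and $|\height(\Phi(f_{i,d}))|$ grow linearly in $|d|$, with the side of the triangular decomposition containing $\Phi$ of the generator determined by the sign of $d$, and that $\ph^{\pm}_{i,d'}$ — and hence its inverse $\bph^{\pm}_{i,d'}$ — has weight $\pm d'\delta$ and therefore height components scaling with $d'$.

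Granting this dictionary, the series in \eqref{eq:adjacent} and \eqref{eq:same-vertices} are seen to converge in $\wUUaff$: in each summand one of the two factors forces arbitrarily large positive height while the other forces arbitrarily large negative height as the summation index grows, so that, after reordering into the normal form $\UUaffp \cdot \UUaffh \cdot \UUaffm$, all but finitely many summands lie inside any prescribed neighborhood \eqref{eqn:neighborhoods of zero}; the remaining formulas \eqref{eq:non-adjacent}, and the ones for $T_i(\ph^{\pm}_{j,d'})$, involve only finite sums. Extending $T_i$ multiplicatively then only requires knowing that a product of two convergent series of $\wUUaff$ is again convergent, which follows from the multiplicativity of the height filtration analogously to Proposition \ref{prop:completion strong}; and that $T_i$ carries each neighborhood \eqref{eqn:neighborhoods of zero} into another one (continuity) follows from the same estimates. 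Thus $T_i$ is first defined on all of $\UUaff$ with values in $\wUUaff$, and then, being continuous on the dense subalgebra $\UUaff$, extends uniquely to an endomorphism of $\wUUaff$.

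That $T_i$ so extended respects the relations \eqref{eqn:rel 0 affine}--\eqref{eqn:rel 3 affine}, that the inverse operators (given by the analogous formulas, see \cite{DK}) are likewise continuous, and that the braid relations among the $T_i$ ($i\in I$) hold, are all identities between continuous algebra endomorphisms of $\wUUaff$, hence may be checked on the generators $e_{i,d},f_{i,d},\ph^{\pm}_{i,d'}$ of the dense subalgebra $\UUaff$; there they are precisely the computations of \cite{DK}, whose verification in the admissible representations of \loccit{} is equivalent to the corresponding identities of formal series in $\wUUaff$, since the completion acts on, and is separated by, that family of representations. Composing $T_i$ with the inverse operator and comparing on the generators shows the composite is the identity, so each $T_i$ is an automorphism, and the braid relations propagate from the generators to all of $\wUUaff$ by continuity. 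The main obstacle in this program is the first step — pinning down precisely the rate and sign of the height growth of $\Phi(e_{i,d})$, $\Phi(f_{i,d})$ and, especially, of the Cartan currents $\Phi(\ph^{\pm}_{i,d'})$ and their inverses — since it is exactly this control that makes the infinite sums in \eqref{eq:adjacent} and \eqref{eq:same-vertices} converge in the completion and that forces the operators $T_i$ to be continuous.
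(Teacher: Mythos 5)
The paper does not actually prove this proposition: it is presented as a reinterpretation of the main result of \cite{DK} (who work in a class of admissible representations rather than in a completion), and no argument is supplied beyond the citation. Your outline therefore does more than the paper, and its well-definedness part is sound and is exactly the height bookkeeping the authors use in the proof of Proposition \ref{prop:completions compatible}: in each infinite sum of \eqref{eq:adjacent} and \eqref{eq:same-vertices}, one factor is pushed into $\UUaffp_{\geq N}$ and the other into $\UUaffm_{\leq -N}$ as the summation index grows (e.g.\ $e_{i,k}$ has affine weight $\alpha_i+k\delta$ and $\ph^{+}_{i,k}$ has weight $k\delta$, so their heights grow linearly in $k$), hence the tails lie in arbitrarily deep neighborhoods \eqref{eqn:neighborhoods of zero}.

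The genuine gap is the step where you transfer all remaining verifications (compatibility with the defining relations, invertibility, braid relations) from \cite{DK} to $\wUUaff$ ``since the completion acts on, and is separated by, that family of representations.'' The separation claim --- that an element of $\wUUaff$ acting by zero on every admissible representation of \cite{DK} must vanish --- is precisely what would need proof, and it is established neither in the paper nor in your argument; without it, identities verified in representations do not automatically become identities of formal series in the completion. There is also a mild circularity in your ordering: to ``extend $T_i$ multiplicatively'' from its values on generators to all of $\UUaff$, you must already know that the images satisfy the defining relations, so relation-checking cannot be deferred until after the extension. The cleaner route (implicit in Remark \ref{rem:DK-completion}) is to carry out the computations of \cite{DK,DK2} directly as identities in the completion, where each graded piece reduces to a finite check; failing that, one must actually prove the faithfulness of the admissible category on $\wUUaff$.
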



\medskip

\subsection{Fused currents}
\label{sub:fused}

Recall that any reduced decomposition of the longest element in the Weyl group associated to~$\fg$ (with $s_i=s_{\alpha_i}$ being the simple reflections)
\begin{equation}
\label{eqn:reduced decomposition}
  w_0 = s_{i_1} \dots s_{i_n}
\end{equation}
yields a total (convex) order on the set $\Delta^+$ of positive roots of $\fg$, by setting
\begin{equation}
\label{eqn:total order}
  s_{i_n} \dots s_{i_2}(\alpha_{i_1}) > s_{i_n} \dots s_{i_3}(\alpha_{i_2}) >  \dots > s_{i_n}(\alpha_{i_{n-1}}) > \alpha_{i_n}
\end{equation}

\medskip

\begin{definition}
\label{def:fused currents}
(\cite{DK}) For any reduced decomposition \eqref{eqn:reduced decomposition}, any positive root $\alpha = s_{i_n} \dots s_{i_{k+1}}(\alpha_{i_k})$ (for some $k \in \{1,\dots,n\}$), and any $d \in \BZ$, let
\begin{equation}
\label{eqn:fused general}
  \tf_{\alpha,d} = T^{-1}_{i_n} \dots T^{-1}_{i_{k+1}}(f_{i_k,d}) \in \wUUaff
\end{equation}
The fused current is defined by $\tf_{\alpha}(x) = \sum_{d \in \BZ} \frac {\tf_{\alpha,d}}{x^d}$.
\end{definition}

\medskip
\noindent
The original definition of \cite{DK} had the expressions \eqref{eqn:fused general} defined in an appropriate class of representations in which certain countable sums are well-defined. We posit that their definition is equivalent to the completion of Definition \ref{def:completion}.

\medskip

\begin{conjecture}
\label{conj:dk}
For any $(\alpha,d) \in \Delta^+ \times \BZ$, we have
$$
  \tf_{\alpha,d} \in \emph{Im}\Big( \wUUm_{-\alpha,d} \longrightarrow \wUUaff \Big)
$$
with respect to the algebra homomorphism~\eqref{eqn:completions compatible} of Proposition \ref{prop:completions compatible}.
\end{conjecture}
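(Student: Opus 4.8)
Fix the reduced decomposition \eqref{eqn:reduced decomposition}, a positive root $\alpha = s_{i_n}\cdots s_{i_{k+1}}(\alpha_{i_k})$ as in Definition \ref{def:fused currents}, and $d \in \BZ$. For $m = k, k+1, \dots, n$ set
$$
  \alpha^{(m)} = s_{i_m}\cdots s_{i_{k+1}}(\alpha_{i_k}), \qquad
  h^{(m)} = T^{-1}_{i_m}\cdots T^{-1}_{i_{k+1}}(f_{i_k,d}) \in \wUUaff ,
$$
so that $\alpha^{(k)} = \alpha_{i_k}$ and $h^{(k)} = \Phi(f_{i_k,d})$, while $\alpha^{(n)} = \alpha$ and $h^{(n)} = \tf_{\alpha,d}$; for $m > k$ we have $\alpha^{(m)} = s_{i_m}(\alpha^{(m-1)})$ and $h^{(m)} = T^{-1}_{i_m}(h^{(m-1)})$, and each $\alpha^{(m)}$ is a positive root because $s_{i_k}s_{i_{k+1}}\cdots s_{i_m}$ is a reduced subword of $w_0$. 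The plan is to prove by induction on $m$ that $h^{(m)}$ lies in the subalgebra $\Phi(\wUUm) \subseteq \wUUaff$. The base case $m=k$ is immediate; and since $\tf_{\alpha,d} = h^{(n)}$ is homogeneous of degree $(-\alpha,d)$, the conclusion $h^{(n)} \in \Phi(\wUUm)$ forces $\tf_{\alpha,d} \in \Phi(\wUUm_{-\alpha,d})$, which is the assertion of the conjecture.

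For the inductive step, write $h^{(m-1)} = \Phi(h)$ with $h \in \wUUm$ (unique by injectivity of \eqref{eqn:completions compatible}); then $h$ is a path-indexed sum of products of the generators $f_{j,\bullet}$, and we apply $T^{-1}_{i_m}$ factor by factor (it is an algebra automorphism), using the $T^{-1}$-forms of \eqref{eq:non-adjacent}--\eqref{eq:same-vertices} on generators under the identification \eqref{eqn:iso beck}. A factor $f_{j,\bullet}$ with $a_{i_m j}=0$ is left untouched. For a factor $f_{j,\bullet}$ with $a_{i_m j}=-1$, formula \eqref{eq:adjacent} returns a countable sum of products of $f$-generators, and to see that the whole contribution still lies in $\wUUm$ one invokes Proposition \ref{prop:completion strong}: all but finitely many of the products so produced have legs lying below any prescribed convex path of size $(\height(\alpha^{(m)}),d)$, the bookkeeping being furnished by the explicit $q$-power prefactors and degree shifts in \eqref{eq:adjacent} and by the finite-dimensionality of the spaces $\UUm_p$ (Lemma \ref{lem:path}). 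The decisive case is a factor $f_{i_m,\bullet}$: here \eqref{eq:same-vertices} introduces terms containing $e_{i_m,\bullet}$, $\bph^\pm_{i_m,\bullet}$ and powers of the central $C$, which a priori leave $\Phi(\wUUm)$. One must show they telescope away --- roughly: \eqref{eqn:rel 3 affine} turns the commutators $[e_{i_m,\bullet},f_{i_m,\bullet}]$ produced against the remaining $i_m$-colored currents of $h$ into $\ph^\pm_{i_m}$-series, \eqref{eqn:rel 2 affine} then migrates all such series to the far right, where they recombine with the $\bph^\pm_{i_m}$ of \eqref{eq:same-vertices} and cancel (using $\bph^\pm_i(z)=\ph^\pm_i(z)^{-1}$ and $\ph_{i,0}^+\ph_{i,0}^-=1$), and the leftover pure $e_{i_m}$-contributions cancel against one another by the same mechanism that makes Lusztig's root vectors remain in $\uum$ in finite type. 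The $i_m$-colored currents of $h$ needed for these cancellations are available precisely because $\alpha^{(m)} = s_{i_m}(\alpha^{(m-1)})$ is again a positive root.

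The step I expect to be the main obstacle is this last cancellation in the ``same-vertex'' case. In finite type it is packaged by the clean assertion that $T_i$ preserves $\uum$ whenever $w(\alpha_i)>0$; in the loop setting it becomes an identity between infinite sums inside the completion $\wUUm$ --- already delicate in rank two and genuinely combinatorial in higher rank --- and it is really the content that must be unlocked, the mere well-definedness of the infinite sums being the routine part handled by Proposition \ref{prop:completion strong}. A logically cleaner, though not obviously easier, alternative would bypass the braid operators: induct on $\height(\gamma)$ for $\gamma \in \Delta^+$; use convexity to choose a minimal pair $\alpha<\beta$ with $\alpha+\beta=\gamma$; establish for the Ding--Khoroshkin currents a loop Levendorskii--Soibelman relation of the shape \eqref{eqn:comm intro}; and then extract $\tf_{\gamma,d}$ from the $\delta$-coefficient of the left-hand side of that relation, which belongs to $\Phi(\wUUm)[[x^{\pm1},y^{\pm1}]]$ by the inductive hypothesis and the fact that $\Phi(\wUUm)$ is a subalgebra of $\wUUaff$ (Propositions \ref{prop:completion is an algebra} and \ref{prop:completions compatible}). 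This route, however, merely substitutes Conjecture \ref{conj:comm} for the obstacle above.
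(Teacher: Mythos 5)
You have not proved the statement, and in fact neither does the paper: this is Conjecture \ref{conj:dk}, which the authors leave open. The only thing the paper establishes about it is the conditional reduction in the paragraph following Conjecture \ref{conj:comm}, namely that the conjectural commutation relation \eqref{eqn:conj comm} for a minimal pair $\alpha<\beta$ with $\alpha+\beta=\gamma$ lets one write $\tf_{\gamma,d}$ as a countable sum of products $\tf_{\beta,\bullet}\tf_{\gamma,\bullet}$ controlled by Proposition \ref{prop:completion strong}, so that Conjecture \ref{conj:comm} implies Conjecture \ref{conj:dk} by induction on height. Your ``alternative route'' at the end of the proposal is precisely this argument, and, as you yourself observe, it merely substitutes one open conjecture for another.

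Your primary route, via induction on the braid operators $T_{i_m}^{-1}$, contains a genuine and self-acknowledged gap at exactly the point where the real content lies: the same-vertex case \eqref{eq:same-vertices}. There you assert ``roughly'' that the $e_{i_m,\bullet}$, $\bph^\pm_{i_m,\bullet}$ and $C$ contributions telescope away inside $\Phi(\wUUm)$, invoking \eqref{eqn:rel 3 affine} and \eqref{eqn:rel 2 affine} and an analogy with the finite-type fact that $T_i$ preserves $\uum$ when $w(\alpha_i)>0$. This is not an argument but a statement of what would need to be proved: in the loop setting it is an identity between infinite rearranged sums in the completion, and no mechanism is exhibited for why the leftover pure $e_{i_m}$-terms cancel. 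Moreover, even in the adjacent case $a_{i_m j}=-1$ you apply $T_{i_m}^{-1}$ ``factor by factor'' to a path-indexed infinite sum and claim that the resulting legs lie below any prescribed convex path ``by the explicit $q$-power prefactors and degree shifts'' --- but \eqref{eq:adjacent} produces terms $f_{j,d-k}$ with $k\to\infty$, i.e.\ legs whose slopes are unbounded below, and verifying that the reassembled products still organize into convex paths in $\wUUm_{-\alpha^{(m)},d}$ (rather than merely into $\wUUaff$) is nontrivial bookkeeping that is not furnished. As it stands, the proposal correctly locates the difficulty but does not resolve it, and the statement remains a conjecture.
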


\medskip
\noindent
We expect that the leading term of $\tf_{\alpha,0}$ is precisely the root vector $f_{\alpha} \in \uum$ of~\eqref{eqn:root vectors} under the natural embedding $\uum\hookrightarrow \UUm$ (that is, $f_{\alpha,0} = f_{\alpha}$ plus products \eqref{eqn:product p} whose associated convex paths lie strictly below the horizontal line). Therefore, it is natural to develop the commutation relations for fused currents, akin to Levendorskii--Soibelman formulas \eqref{eqn:commutation intro} in the finite type quantum group. In particular, we will focus on the case of a minimal pair of positive roots $\alpha < \beta$, and seek an analogue of \eqref{eqn:commutation intro minimal}.

\medskip

\begin{conjecture}
\label{conj:comm}
For any minimal pair $\alpha < \beta$ w.r.t.\ the order~\eqref{eqn:total order}, we have
\begin{equation}
\label{eqn:conj comm}
  \tf_{\alpha}(x) \tf_{\beta}(y) - \tf_{\beta}(y) \tf_{\alpha}(x) \cdot \frac {xq^{\tt} - yq^{(\alpha,\beta)}}{xq^{\tt+(\alpha,\beta)}-y} =
  c(q) \cdot \delta\left( \frac {xq^{\tt+(\alpha,\beta)}}y \right) \tf_{\alpha+\beta}(xq^u)
\end{equation}
for some $u,\tt \in \BZ$ and $c(q) \in \BZ[q,q^{-1}]^\times$ depending on $\alpha,\beta$, where $\delta(x) = \sum_{d \in \BZ} x^d$. In the second term from the LHS of \eqref{eqn:conj comm}, the rational function is expanded in the region $|x|\gg |y|$.
\end{conjecture}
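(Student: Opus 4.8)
The plan is to verify the operator identity \eqref{eqn:fused intro equality} by pairing both sides against an arbitrary $R\in\CS_{\alpha+\beta}$. Since the coefficients of $f_\alpha(x),f_\beta(y),f_{\alpha+\beta}(x)$ lie in the completion $\wUUm$, and the pairing \eqref{eqn:pairing} extends to $\CS_{\alpha+\beta}\otimes\wUUm_{-\alpha-\beta}\to\BQ(q)$ non-degenerately on the right (each graded piece of $\CS_{\alpha+\beta}$ pairing nontrivially with only finitely many of the path-pieces $\UUm_p$ of $\wUUm_{-\alpha-\beta}$, by Proposition \ref{prop:pair slopes} and Lemma \ref{lem:path}), it is enough to show that, for all such $R$,
$$
  \Big\langle R,\,f_\alpha(x)f_\beta(y)\Big\rangle-\Big\langle R,\,f_\beta(y)f_\alpha(x)\Big\rangle\cdot\frac{xq-yq^{-1}}{x-y}=\delta\!\left(\frac xy\right)\cdot\espec^{(x)}_{\alpha+\beta}(R).
$$

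To compute the two pairings I would expand $f_\alpha(x)$ and $f_\beta(y)$ as completed linear combinations of ordered monomials in the $f_{i,d}$'s and insert them into the nested-contour formula \eqref{eqn:pair shuf generators}. The ordering of contours $|z_1|\ll\dots\ll|z_k|$ lets one contract the two blocks of variables one at a time: contracting the block carried by $f_\alpha(x)$ freezes its variables to $z_{ib}\mapsto xq^{\tau(i)}$ and produces the factor $\gamma^{(x)}_\alpha$ --- this is exactly the content of the definition of $\espec^{(x)}_\alpha$ --- and likewise the $f_\beta(y)$-block freezes to $z_{jc}\mapsto yq^{\tau(j)}$ with factor $\gamma^{(y)}_\beta$, leaving only the cross factors $\zeta_{ij}(z_{ib}/z_{jc})^{-1}$ linking the two blocks, evaluated at the specialization. (This contraction is a coproduct computation on $\CS$ together with the slope-filtration factorization of the pairing, in the spirit of Propositions \ref{prop:pair slopes} and \ref{prop:olivier}; along the way one also checks $\espec^{(x)}_\alpha$ is well defined, since a shuffle element has poles only on diagonals $z_{ib}=z_{i'b'}$ with $i,i'$ adjacent, whereas the recipe $\tau(i)=\tau(i')\pm1$ keeps $xq^{\tau(i)}\ne xq^{\tau(i')}$.) Using color-symmetry of $R$ one arrives at
$$
  \Big\langle R,\,f_\alpha(x)f_\beta(y)\Big\rangle=c_{\alpha,\beta}\,\Theta^{<}(x,y)\,\bar R(x,y),\qquad\Big\langle R,\,f_\beta(y)f_\alpha(x)\Big\rangle=c_{\beta,\alpha}\,\Theta^{>}(x,y)\,\bar R(x,y),
$$
where $\bar R(x,y)$ is obtained from $R$ by sending the color-$i$ variables to $xq^{\tau(i)}$ with multiplicity $k_i^{(1)}$ (the coefficient of $\bs^i$ in $\alpha$) and to $yq^{\tau(i)}$ with multiplicity $k_i^{(2)}$ (its coefficient in $\beta$); $c_{\alpha,\beta},c_{\beta,\alpha}$ are monomials built from the $\gamma$'s; and $\Theta^{<}$ (resp.\ $\Theta^{>}$) is the cross $\zeta$-product with the $\alpha$-block preceding (resp.\ following) the $\beta$-block, expanded for $|x|\ll|y|$ (resp.\ $|y|\ll|x|$) as dictated by the contours.

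The crux is the rational-function identity $c_{\alpha,\beta}\,\Theta^{<}(x,y)=c_{\beta,\alpha}\,\Theta^{>}(x,y)\cdot\frac{xq-yq^{-1}}{x-y}$. Using \eqref{eqn:rel 0 affine} to swap the two orderings of each $\zeta$, a cross pair $(i,j)$ contributes to $\Theta^{<}/\Theta^{>}$ the factor $(xq^{\tau(i)-(\alpha_i,\alpha_j)}-yq^{\tau(j)})/(xq^{\tau(i)}-yq^{\tau(j)-(\alpha_i,\alpha_j)})$, which is $1$ for $i,j$ non-adjacent, equals $\frac{x-yq^2}{xq^2-y}$ for $i=j$, and equals $\frac{xq^{\tau(i)+1}-yq^{\tau(j)}}{xq^{\tau(i)}-yq^{\tau(j)+1}}$ for $i,j$ adjacent. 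The assertion is that, summed with multiplicities $k_i^{(1)}k_j^{(2)}$, this huge product telescopes to $\frac{xq-yq^{-1}}{x-y}$ up to a monomial absorbed into $c_{\alpha,\beta}/c_{\beta,\alpha}$; equivalently, the pole of $\Theta^{<}$ along the fused diagonal $x=y$ has order exactly $\sum_{i\to j\text{ in }Q}k_i^{(1)}k_j^{(2)}-\sum_i k_i^{(1)}k_i^{(2)}=1$ (which is $-\langle\alpha,\beta\rangle_Q$ for the Euler form of $Q$, so the claim is the homological fact that a minimal pair satisfies $\langle\alpha,\beta\rangle_Q=-1$), and $c_{\alpha,\beta}\Theta^{<}\bar R$ has no pole on any other line $x=yq^{\ell}$, the would-be poles of $\bar R$ on $x=yq^{\pm1}$ coming from adjacent colors being cancelled by zeros of $\Theta^{<}$ thanks to the wheel conditions \eqref{eqn:wheel}. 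This is exactly where being a \emph{minimal} pair for the \emph{Auslander-Reiten} order enters: the rigid description of such pairs via the Auslander-Reiten quiver (Section \ref{sec:quivers}), together with compatibility of $\{\tau(i)\}$ with the orientation, is what forces the displayed count to be $1$. I expect this telescoping/pole-count --- not the surrounding formal steps --- to be the main obstacle, to be handled by reducing to the rank-two sub-configurations visible to a minimal pair and a uniform argument over ADE via Auslander-Reiten theory.

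Granting the fusion identity, write $G(x,y):=c_{\alpha,\beta}\Theta^{<}(x,y)\bar R(x,y)$, a rational function whose only pole in the ratio $x/y$ is simple, at $x=y$. The left-hand side of the displayed identity is then $[G]_{|x|\ll|y|}-[G]_{|x|\gg|y|}$, which by the standard formal-delta identity equals $\delta(x/y)$ times the specialization of $G$ along $x=y$. But at $x=y$ the two blocks of variables of $\bar R(x,y)$ merge into the single specialization $z_{ib}\mapsto xq^{\tau(i)}$ for all $b$ --- the one defining $\espec^{(x)}_{\alpha+\beta}(R)$ --- so, after reconciling the $\gamma$-prefactors (the routine bookkeeping that pins down the normalization in Definition \ref{def:specialization ar}), that specialization of $G$ is exactly $\espec^{(x)}_{\alpha+\beta}(R)=\langle R,f_{\alpha+\beta}(x)\rangle$. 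Comparing coefficients of $x^{-a}y^{-b}$ and invoking non-degeneracy of \eqref{eqn:pairing} completes the proof.
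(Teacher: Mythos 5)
The statement you were asked to prove is Conjecture \ref{conj:comm}, which concerns the Ding--Khoroshkin fused currents $\tf_\alpha(x)$ of Definition \ref{def:fused currents} --- objects defined through the braid group operators $T_i^{-1}$ acting on the completion $\wUUaff$ --- for an arbitrary finite type $\fg$ and an arbitrary convex order \eqref{eqn:total order}. The paper does not prove this statement; it is left as a conjecture. What your proposal actually argues (and announces from its first sentence) is the identity \eqref{eqn:fused intro equality} for the currents $f_\alpha(x)$ that are \emph{defined} as duals of the specialization maps $\espec^{(x)}_\alpha$, and only in type ADE for orders refining the Auslander--Reiten partial order. The bridge between the two families of currents, $\tf_\alpha(x)=c_\alpha^{(x)}\cdot f_\alpha(x)$, is itself Conjecture \ref{conj:match} and is established nowhere (you do not address it); even Conjecture \ref{conj:dk}, which asserts that the coefficients $\tf_{\alpha,d}$ lie in $\wUUm$ so that pairing them with $\CS$ makes sense at all, is open. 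So the proposal proves a different, strictly weaker statement and leaves the actual conjecture untouched: nothing in your argument sees the braid-group definition \eqref{eqn:fused general}, which is the only handle one has on $\tf_\alpha(x)$.

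Within the scope of what you do prove, your route coincides with the paper's proof of Theorem \ref{thm:main intro}: pair against $R\in\CS_{\alpha+\beta}$, reduce via \eqref{eqn:expansion 1}--\eqref{eqn:expansion 2} to the difference of the two expansions of a single rational function in $x/y$, and extract the residue at its unique simple pole $x=y$, identifying it with $\espec^{(x)}_{\alpha+\beta}(R)$ after matching the $\gamma$-prefactors. However, you explicitly defer the decisive step --- that the function \eqref{eqn:count} has no poles on the lines $x=yq^{\pm2}$ and that the pole at $x=y$ is exactly simple --- calling it ``the main obstacle, to be handled by reducing to rank-two sub-configurations.'' In the paper this step is Lemma \ref{lem:minimal ar} (a minimal AR pair satisfies $\langle\alpha,\beta\rangle=-1$ and $\langle\beta,\alpha\rangle=0$, proved via Baer sums and Claims \ref{claim:1}--\ref{claim:2}) combined with Proposition \ref{lem:pole ar}, whose proof runs through the global combinatorial bound \eqref{eqn:claim ineq} and the surjectivity of the restriction map $\mathrm{Ext}^1(W,V)\twoheadrightarrow\mathrm{Ext}^1(W_{I'},V_{I'})$ to full subquivers (Claim \ref{claim:gen}) --- not a rank-two reduction. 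Until that pole count is actually carried out, the heart of the argument is missing; the surrounding formal steps (the coproduct computation, the delta-function extraction, the non-degeneracy of \eqref{eqn:pairing} on slope pieces) are correct and match the paper.
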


\medskip
\noindent
Note that Conjecture \ref{conj:comm} implies Conjecture \ref{conj:dk}, by the following inductive argument. Formula \eqref{eqn:conj comm} allows us to write for all $(\alpha,d) \in \Delta^+ \times \BZ$ (with $\height(\alpha)>1$)
\begin{equation*}
  \tf_{\alpha,d} = \text{constant} \cdot \tf_{\beta,d} \tf_{\gamma,0} + \sum_{n=0}^{\infty} \text{constant} \cdot \tf_{\gamma,n} \tf_{\beta, d-n}
\end{equation*}
for some $\beta,\gamma \in \Delta^+$ with $\alpha = \beta+\gamma$. If all the $\tf$'s in the right-hand side lie in $\wUUm$ by the induction hypothesis, then Proposition \ref{prop:completion strong} implies that so does the entire right-hand side. Therefore, so does the left-hand side, which establishes the induction step.


\medskip

\subsection{Pairing with elements in the completion}

We will now explain the importance of Conjecture \ref{conj:dk}, i.e.\ why is it important that $\tf_{\alpha,d}$ should be interpreted as lying in the completion \eqref{eqn:completion} rather than the completion \eqref{eqn:second completion}. To us, this is relevant because the former completion interacts with the pairing \eqref{eqn:pairing} in the following natural way.

\medskip

\begin{proposition}
\label{prop:completion pairing}
The pairing \eqref{eqn:pairing} naturally extends to a pairing
\begin{equation}
\label{eqn:extended pairing}
  \CS \otimes \wUUm \xrightarrow{\langle \cdot, \cdot \rangle} \BQ(q)
\end{equation}
For any $(\bk,d) \in \nn \times \BZ$, any linear functional $\CS_{\bk,d} \xrightarrow{\lambda} \BQ(q)$ can be written as
\begin{equation}
\label{eqn:required for linear functional}
  \lambda(R) = \langle R, f \rangle, \qquad \forall\, R\in \CS_{\bk,d}
\end{equation}
for a unique $f \in \wUUm_{-\bk,-d}$.
\end{proposition}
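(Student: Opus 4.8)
The plan is to leverage the slope factorizations \eqref{eqn:factorization 1} and \eqref{eqn:factorization 3} together with Proposition \ref{prop:pair slopes}, in order to express the pairing as a countable direct sum of \emph{finite}-dimensional non-degenerate pairings indexed by convex paths. From \eqref{eqn:factorization 1}, each graded piece decomposes as a vector space direct sum $\CS_{\bk,d}=\bigoplus_p\CS^{(p)}_{\bk,d}$, where $p$ runs over convex paths of size $(|\bk|,d)$ and $\CS^{(p)}_{\bk,d}=\CB_{\bk_1,d_1}\cdots\CB_{\bk_t,d_t}$ for the legs $(\bk_1,d_1),\dots,(\bk_t,d_t)$ of $p$; since each $\CB_{\bk_l,d_l}$ is finite-dimensional, so is each $\CS^{(p)}_{\bk,d}$, and — this being a genuine direct sum — every $R\in\CS_{\bk,d}$ is supported on finitely many $p$. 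Dually, \eqref{eqn:factorization 3} gives $\UUm_{-\bk,-d}=\bigoplus_p\UUm_p$ with $\UUm_p=\CB_{-\bk_1,-d_1}\cdots\CB_{-\bk_t,-d_t}$, while $\wUUm_{-\bk,-d}=\prod_p\UUm_p$ by Definition \ref{def:completion}.

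The first key step is to show that the pairing \eqref{eqn:pairing} is block-diagonal: $\CS^{(p)}_{\bk,d}$ pairs trivially with $\UUm_{p'}$ for $p\ne p'$, while the restriction $\CS^{(p)}_{\bk,d}\otimes\UUm_p\to\BQ(q)$ is non-degenerate. By bilinearity it suffices to pair ordered products of slope-homogeneous factors, and Proposition \ref{prop:pair slopes} evaluates such a pairing as the product of the factorwise pairings $\langle b^+_\mu,b^-_\mu\rangle$; since each of these vanishes unless $b^+_\mu,b^-_\mu$ have opposite degrees (in particular, unless both have degree $(0,0)$ or both are genuine legs of matching multidegree), the product vanishes unless the two paths have identical legs. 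For $p=p'$ the same formula identifies the pairing with the tensor product of the pairings $\CB_{\bk_l,d_l}\otimes\CB_{-\bk_l,-d_l}\to\BQ(q)$, so it remains to prove that each of these is non-degenerate. This is the crux: if some nonzero $b^-\in\CB_{-\bk_l,-d_l}$ were orthogonal to all of $\CB_{\bk_l,d_l}$, then, since $\CB_{\bk_l,d_l}$ is precisely the single-leg summand $\CS^{(p_0)}_{\bk_l,d_l}$ of $\CS_{\bk_l,d_l}$ and block-diagonality forces $b^-$ to annihilate every other summand, $b^-$ would be orthogonal to the whole of $\CS_{\bk_l,d_l}$, contradicting the non-degeneracy in Theorem \ref{thm:dual}.

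Granting this, one extends \eqref{eqn:pairing} by declaring $\langle R,f\rangle:=\sum_p\langle R,\pi_p\rangle$ for $R\in\CS_{\bk,d}$ and $f=\sum_p\pi_p\in\wUUm_{-\bk,-d}$ (and $0$ between non-opposite degrees); this sum is finite because $R$ has finite path-support and the pairing is block-diagonal, and it visibly agrees with \eqref{eqn:pairing} on $\UUm\subset\wUUm$. For the last assertion, given $\lambda\colon\CS_{\bk,d}\to\BQ(q)$, restrict it to each finite-dimensional summand $\CS^{(p)}_{\bk,d}$ and use the non-degeneracy established above to obtain the unique $\pi_p\in\UUm_p$ with $\langle\,\cdot\,,\pi_p\rangle=\lambda|_{\CS^{(p)}_{\bk,d}}$; then $f:=\sum_p\pi_p$ lies in $\prod_p\UUm_p=\wUUm_{-\bk,-d}$, and for $R=\sum_pR_p\in\CS_{\bk,d}$ (a finite sum) block-diagonality gives $\langle R,f\rangle=\sum_p\langle R_p,\pi_p\rangle=\sum_p\lambda(R_p)=\lambda(R)$. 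Equivalently, the extended pairing identifies $\wUUm_{-\bk,-d}=\prod_p\UUm_p$ with the full algebraic dual $\prod_p\bigl(\CS^{(p)}_{\bk,d}\bigr)^{*}=\bigl(\CS_{\bk,d}\bigr)^{*}$.

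The main obstacle is the non-degeneracy of the finite-dimensional pairing $\CB_{\bk_l,d_l}\otimes\CB_{-\bk_l,-d_l}\to\BQ(q)$; everything else is bookkeeping once \eqref{eqn:factorization 1}, \eqref{eqn:factorization 3} and Proposition \ref{prop:pair slopes} are in hand. A secondary point to keep track of is that, although both $\CS_{\bk,d}$ and $\UUm_{-\bk,-d}$ may be infinite-dimensional, passing from $\UUm_{-\bk,-d}=\bigoplus_p\UUm_p$ to $\wUUm_{-\bk,-d}=\prod_p\UUm_p$ replaces a proper subspace of $\bigl(\CS_{\bk,d}\bigr)^{*}$ by all of it — which is precisely what makes the surjectivity statement true, and why it is the direct \emph{product} that appears in Definition \ref{def:completion}.
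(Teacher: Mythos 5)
Your proof is correct and follows essentially the same route as the paper: decompose both sides along convex paths via the slope factorizations \eqref{eqn:factorization 1} and \eqref{eqn:factorization 3}, use Proposition \ref{prop:pair slopes} to see that the pairing is block-diagonal with finite-dimensional blocks, and dualize block by block. The only difference is that you derive the non-degeneracy of the restricted pairing $\CB_{\bk,d}\otimes\CB_{-\bk,-d}\to\BQ(q)$ from Theorem \ref{thm:dual} together with block-diagonality, whereas the paper simply recalls this fact; your derivation is a welcome addition (and since $\dim\CB_{-\bk,-d}=\dim\CB_{\bk,d}$ by \eqref{eqn:iso halves}, injectivity of $\CB_{-\bk,-d}\to(\CB_{\bk,d})^{*}$ does suffice for the perfectness you need in the surjectivity step).
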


\medskip

\begin{proof}
Let us recall the decomposition \eqref{eqn:factorization 1} for $\CS$ and \eqref{eqn:factorization 3} for $\UUm$. Any given $R \in \CS$ decomposes in terms of the slope subalgebras $\CB_\mu^+$ for $\mu$ in a finite subset $\Omega \subset \BQ$. Proposition \ref{prop:pair slopes} says that the pairing \eqref{eqn:pairing} respects these decompositions, and so $R$ pairs non-trivially only with products \eqref{eqn:product p} whose corresponding convex path has legs with slopes in $\Omega$. But only finitely many convex paths of any given size have legs with slopes in the finite subset $\Omega \subset \BQ$, so we conclude that the pairing of $R$ with any countable sum of products making up $\wUUm$ is well-defined.

\medskip
\noindent
Let us now prove the statement about $\lambda$. Recall that the restriction of \eqref{eqn:pairing} to
\begin{equation*}
  \CB_\mu^+ \otimes \CB_\mu^- \xrightarrow{\langle \cdot, \cdot \rangle} \BQ(q)
\end{equation*}
is a non-degenerate pairing of graded vector spaces which are finite-dimensional in every degree (and thus a perfect pairing). Therefore, so is
\begin{equation}
\label{eqn:pair restricted 2}
  \Big( \CB_{\bk_1,d_1} \otimes \dots \otimes \CB_{\bk_t,d_t} \Big) \otimes \Big( \CB_{-\bk_1,-d_1} \otimes \dots \otimes \CB_{-\bk_t,-d_t} \Big) \xrightarrow{\langle \cdot, \cdot \rangle} \BQ(q)
\end{equation}
for any convex path with legs $(|\bk_1|,d_1), \dots, (|\bk_t|,d_t)$ in $\BN \times \BZ$ of size $(|\bk|,d)$, in virtue of Proposition \ref{prop:pair slopes}. For such a convex path, let us denote
$$
  \lambda_{(\bk_1,d_1),\dots,(\bk_t,d_t)}\colon \underbrace{\CB_{\bk_1,d_1} \otimes \dots \otimes \CB_{\bk_t,d_t}}_{\text{a direct summand of }\CS} \rightarrow \BQ(q)
$$
the appropriate restriction of $\lambda$. Since the pairing \eqref{eqn:pair restricted 2} is perfect, there exists
$$
  f_{(\bk_1,d_1),\dots,(\bk_t,d_t)} \in \underbrace{\CB_{-\bk_1,-d_1} \otimes \dots \otimes \CB_{-\bk_t,-d_t}}_{\text{a direct summand of } \UUm}
$$
such that $\lambda_{(\bk_1,d_1),\dots,(\bk_t,d_t)}$ is given by pairing with $f_{(\bk_1,d_1),\dots,(\bk_t,d_t)}$. Then letting
$$
  f = \mathop{\sum_{\text{convex paths of size } (|\bk|,d)}}_{\text{with legs } (|\bk_1|,d_1),\dots,(|\bk_t|,d_t)} f_{(\bk_1,d_1),\dots,(\bk_t,d_t)}
$$
yields the required element of $\wUUm$ in \eqref{eqn:required for linear functional}. The uniqueness of such $f$ satisfying \eqref{eqn:required for linear functional} is due to non-degeneracy of \eqref{eqn:pairing}.
\end{proof}


\medskip

\subsection{Specialization maps}
\label{sub:specialization}

If the coefficients of the fused currents $\tf_\alpha(x)$ lie in the completion $\wUUm$, as predicted by Conjecture \ref{conj:dk}, then Proposition \ref{prop:completion pairing} implies that they have a well-defined pairing with elements of the shuffle algebra.

\medskip

\begin{definition}
\label{def:spec}
For any positive root $\alpha$, define the specialization map
\begin{equation}
\label{eqn:specialization}
  \CS_{\alpha} \xrightarrow{\tespec^{(x)}_{\alpha}} \BQ(q)[x,x^{-1}], \qquad \tespec^{(x)}_\alpha(R) = \Big \langle R, \tf_{\alpha}(x) \Big \rangle
\end{equation}
\end{definition}

\medskip
\noindent
Such specialization maps were studied in (super)type $A$ in \cite{Ts, Ts0}, in types $B_n$ and $G_2$ in \cite{HT}, and in affine type $A$ in \cite{N0}, for a specific choice of the order on $\Delta^+$. In all of these cases, the specialization maps were given by setting
\begin{equation}
\label{eqn:collection}
  \tspec^{(x)}_\alpha(R) = \widetilde{\gamma}_\alpha^{(x)} \cdot R(\dots,z_{ib},\dots)\Big|_{z_{ib} \mapsto xq^{\sigma_{ib}}, \forall i,b}
\end{equation}
for some collection of integers $(\sigma_{ib})_{i \in I, b \geq 1}$ and some prefactor $\widetilde{\gamma}^{(x)}_{\alpha} \in \BQ(q)^\times \cdot x^\BZ$. In general, we expect the specialization maps to be given by a suitable derivative of $R$ evaluated at a collection as in the right-hand side of \eqref{eqn:collection}, up to a prefactor.

\medskip
\noindent
It would be very interesting to obtain a complete description of the specialization maps \eqref{eqn:specialization} for any finite type root system and any reduced decomposition \eqref{eqn:reduced decomposition} of the longest word; in Section \ref{sec:quivers}, we provide such a description in the particular case of ADE type quivers. However, we emphasize the fact that the specialization maps $\tspec^{(x)}_{\alpha}$ should be considered in relation to the conjectural formula \eqref{eqn:conj comm}. Specifically, if $\alpha < \beta$ is a minimal pair and $R \in \CS_{\alpha+\beta}$, then we have
\begin{equation}
\label{eqn:expansion 1}
  \Big \langle R, \tf_{\alpha}(x) \tf_{\beta}(y) \Big \rangle =
  \frac {\tspec_\alpha^{(x)} \otimes \tspec_{\beta}^{(y)}(R)}{\prod_{i,b} \prod_{j,c} \zeta_{ij} \left(\frac {xq^{\sigma_{ib}}}{yq^{\tau_{jc}}}\right)} \Big|_{\text{expanded as } |x| \ll |y|}
\end{equation}
(this formula comes from a topological coproduct on the Cartan-extended version of $\CS$, cf.~\cite[(2.35)]{N1}), where $(\sigma_{ib})$ and $(\tau_{jc})$ are the collections of integers associated to the specialization maps $\tspec_{\alpha}^{(x)}$ and $\tspec_{\beta}^{(y)}$, respectively. Here, $\tspec_\alpha^{(x)} \otimes \tspec_{\beta}^{(y)}(R)$ means that one divides the variables of $R$ into two groups: the variables in one of the groups are specialized according to $\tspec_{\alpha}^{(x)}$, and the variables in the other group are specialized according to $\tspec_{\beta}^{(y)}$. Meanwhile, we postulate that
\begin{equation}
\label{eqn:rational function}
  \prod_{i,b} \prod_{j,c} \frac {\zeta_{ji} \left(\frac {yq^{\tau_{jc}}}{xq^{\sigma_{ib}}}\right)}{\zeta_{ij} \left(\frac {xq^{\sigma_{ib}}}{yq^{\tau_{jc}}}\right)} =
  \frac {xq^{\tt} - yq^{(\alpha,\beta)}}{xq^{\tt+(\alpha,\beta)}-y}
\end{equation}
with $\tt$ as in~\eqref{eqn:conj comm}, so that we have
\begin{equation}
\label{eqn:expansion 2}
  \left \langle R,  \tf_{\beta}(y) \tf_{\alpha}(x) \frac {xq^{\tt} - yq^{(\alpha,\beta)}}{xq^{\tt+(\alpha,\beta)}-y} \right \rangle =
  \frac {\tspec_\alpha^{(x)} \otimes \tspec_{\beta}^{(y)}(R)}{\prod_{i,b} \prod_{j,c} \zeta_{ij} \left(\frac {xq^{\sigma_{ib}}}{yq^{\tau_{jc}}}\right)} \Big|_{\text{expanded as } |x| \gg |y|}
\end{equation}
Comparing the right-hand sides of the expansions \eqref{eqn:expansion 1} and \eqref{eqn:expansion 2}, we see that we have the exact same rational function in $x$ and $y$, first expanded as $|x| \ll |y|$ and then expanded as $|x| \gg |y|$. Therefore, formula \eqref{eqn:conj comm} precisely entails the fact that said rational function has a single pole at $y = xq^{\tt+(\alpha,\beta)}$, and that the corresponding residue at this pole is none other than $c(q)\cdot \tspec_{\alpha+\beta}^{(xq^u)}(R)$. We conclude that specialization maps give us a novel (and dual) way of thinking about the conjectural commutation relation \eqref{eqn:conj comm} of fused currents.


\bigskip

\section{Quivers of ADE type}
\label{sec:quivers}


\medskip

\subsection{Quivers and Hall algebras}

We will henceforth assume that $\fg$ is a simply-laced finite type Lie algebra, and we choose an orientation $Q$ of the Dynkin diagram of $\fg$. The inner product $(\cdot, \cdot)$ on the root lattice satisfies $(\beta,\alpha)\in \{-1,0,1\}$ for any $\alpha,\beta\in \Delta$ with $\beta\ne \pm \alpha$. Having made this choice, we may consider the pairing
\begin{equation}
\label{eqn:lattice pairing}
  \langle \cdot, \cdot \rangle \colon  \BZ^I \times \BZ^I \longrightarrow \BZ, \qquad \langle \bv, \bw \rangle = \sum_{i \in I} v_i w_i - \sum_{\overrightarrow{ij}} v_i w_j
\end{equation}
and note that $(\bv,\bw) = \langle \bv, \bw \rangle + \langle \bw, \bv \rangle$. Let $\BF_{q^2}$ be a finite field.

\medskip

\begin{definition}
\label{def:hall}
Consider the category $\mathcal{C}$ of finite-dimensional representations of the quiver $Q$, i.e.\ collections of finite-dimensional vector spaces over $\BF_{q^2}$ associated to the vertices of $Q$ and linear maps associated to the edges of $Q$
\begin{equation}\label{eqn:arrow map}
  V = \Big( V_i \xrightarrow{\phi_e} V_j \Big)_{i,j \in I, e = \oij}
\end{equation}
modulo change of basis of the vector spaces $V_i$. The Hall algebra of $\mathcal{C}$ is defined as
$$
  \Hall = \Hall(\mathcal{C}) \, = \bigoplus_{[V] \in \emph{Ob}(\mathcal{C})/\sim} \BQ \cdot [V]
$$
endowed with the multiplication
\begin{multline}
\label{eqn:hall}
  [V] \cdot [W] = q^{\langle \bv,\bw \rangle} \cdot \\ \sum_{[X] \in \emph{Ob}(\mathcal{C})/\sim} [X] \cdot \#\big \{\text{subreps } Y \subset X \text{ s.t. } Y \simeq W ,\ X/Y \simeq V \big\}
\end{multline}
where $\bv = (\dim V_i)_{i \in I}$ and $\bw = (\dim W_i)_{i \in I}$.
\end{definition}

\medskip
\noindent
It is well-known that the structure constants of the algebra $\Hall$ (i.e.\ the numbers that appear in the RHS of \eqref{eqn:hall}) are Laurent polynomials in $q$ with rational coefficients. Thus, one can think of $q$ as a formal parameter, and of $\Hall$ as an algebra over $\BQ(q)$. With this in mind, we have the following foundational result.

\medskip

\begin{theorem}(\cite{Gr,R7})
There is an algebra isomorphism
\begin{equation}
\label{eqn:iso hall}
  \uup \iso \Hall
\end{equation}
determined by sending the generator $e_i$ to the simple quiver representation with a one-dimensional vector space at the vertex $i$ and $0$ everywhere else.
\end{theorem}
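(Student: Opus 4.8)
The plan is to build the map in the stated direction and then verify it is an isomorphism by a graded dimension count, following Ringel's original approach. First, I would exhibit an algebra homomorphism $\Psi\colon \uup \to \Hall$. Since $\uup$ is presented by the generators $e_i$ subject to the quantum Serre relations, it suffices to check those relations for the classes $[S_i]$ of the simple representations inside the twisted Hall algebra of Definition~\ref{def:hall}. Observe that the pairing \eqref{eqn:lattice pairing} is the Euler form, $\langle \bv, \bw \rangle = \dim \mathrm{Hom}(V,W) - \dim \mathrm{Ext}^1(V,W)$ for representations of dimension vectors $\bv, \bw$. When $i, j$ are non-adjacent we have $\mathrm{Ext}^1(S_i, S_j) = \mathrm{Ext}^1(S_j, S_i) = 0$ and $\langle \bs^i, \bs^j \rangle = 0$, so $[S_i][S_j] = [S_i \oplus S_j] = [S_j][S_i]$; when $i, j$ are joined by an edge, a direct computation of the relevant Hall numbers (there is a unique non-split extension of $S_i$ by $S_j$, namely the indecomposable of dimension vector $\bs^i + \bs^j$, and the twisting exponents are read off from \eqref{eqn:lattice pairing}) yields $[S_i]^2 [S_j] - (q + q^{-1}) [S_i][S_j][S_i] + [S_j][S_i]^2 = 0$, and symmetrically with the roles of $i$ and $j$ interchanged. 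This is the classical Ringel computation; the $q^{\langle \cdot, \cdot \rangle}$ twist in \eqref{eqn:hall} is precisely what produces the coefficient $q + q^{-1} = [2]_q$. Thus $\Psi$ is a well-defined homomorphism of $\nn$-graded algebras.

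For surjectivity, note that the image of $\Psi$ is the composition subalgebra $\Hall^c \subseteq \Hall$ generated by the $[S_i]$. As $\fg$ is of type ADE, Gabriel's theorem identifies the indecomposable representations of $Q$ with the positive roots $\Delta^+$, and every representation is uniquely a direct sum of indecomposables; ordering the indecomposables $M_{\alpha_1}, \dots, M_{\alpha_N}$ by a directed order refining the Auslander-Reiten order of \cite{Ri} (so that there are no nonzero homomorphisms or extensions ``pointing upward''), one obtains a triangular basis of $\Hall$ made of ordered products of divided powers of the $[M_{\alpha_a}]$, with leading term $\big[\bigoplus_a M_{\alpha_a}^{\oplus n_a}\big]$. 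Hence $\Hall$ is spanned by products of indecomposable classes modulo strictly lower-order corrections, and it suffices to prove $[M_\alpha] \in \Hall^c$ for every $\alpha \in \Delta^+$. This I would establish by induction on $\height(\alpha)$: the base case is $[S_i] = \Psi(e_i)$, and for $\height(\alpha) \geq 2$ one picks a decomposition $\alpha = \beta + \gamma$ into positive roots admitting a non-split extension (for instance a minimal pair $\beta < \gamma$), expands the product $[M_\beta][M_\gamma] \in \Hall^c$ via \eqref{eqn:hall}, and solves for $[M_\alpha]$, the remaining terms being classes of decomposable representations of dimension vector $\alpha$ whose indecomposable summands have strictly smaller height and therefore lie in $\Hall^c$ by the induction hypothesis (the full bookkeeping is carried out in \cite{R7}). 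Equivalently, one may invoke BGP reflection functors at sink and source vertices, which preserve $\Hall^c$ since on composition subalgebras they match Lusztig's symmetries, together with the fact that every indecomposable is obtained from a simple by iterated reflection.

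Injectivity then follows by comparing dimensions in each graded degree $\bv \in \nn$. On the Hall side, $\dim_{\BQ(q)} \Hall_\bv$ is the number of isomorphism classes of representations of dimension vector $\bv$, which by Gabriel's theorem equals the number $\CK(\bv)$ of multisets of positive roots summing to $\bv$, i.e.\ the value of the Kostant partition function. On the quantum-group side, the PBW theorem for $\uup$ (for example via Lusztig's root vectors attached to any convex order, such as the AR order) gives $\dim_{\BQ(q)} (\uup)_\bv = \CK(\bv)$ as well. Since $\Psi$ is a degreewise surjection between finite-dimensional spaces of the same dimension, it is an isomorphism, and by construction $e_i \mapsto [S_i]$.

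The step I expect to be the main obstacle is surjectivity --- proving that the composition subalgebra exhausts all of $\Hall$. Verifying the Serre relations is a finite, mechanical Hall-number computation, and injectivity is immediate once one has Gabriel's theorem together with PBW; but extracting each indecomposable class $[M_\alpha]$ from products of lower ones requires either careful control of Hall numbers along the chosen convex order, or the reflection-functor machinery and its compatibility with Lusztig's symmetries, both of which carry genuine content.
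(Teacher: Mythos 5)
The paper does not prove this theorem --- it is quoted as a foundational result with citations to Green and Ringel, so there is no in-paper argument to compare against. Your sketch is the standard Green--Ringel proof and is essentially correct: the Serre-relation check for the $[S_i]$, the reduction of surjectivity to showing each indecomposable class $[M_\alpha]$ lies in the composition subalgebra (using directedness of the module category in finite type to turn ordered products of indecomposable classes into a basis), and the injectivity via the Kostant-partition-function dimension count against PBW are all the right steps. You correctly flag surjectivity as the step with genuine content; note that the fact you need there --- that for a suitable decomposition $\alpha=\beta+\gamma$ the indecomposable $M_\alpha$ actually occurs as the middle term of a non-split extension, so that one can solve for $[M_\alpha]$ --- is precisely the kind of minimal-pair analysis the paper carries out later in Lemma \ref{lem:minimal ar} and Claim \ref{claim:1}, and the remaining correction terms are classes of direct sums of lower-height indecomposables, which lie in the composition subalgebra because a directed ordered product of their classes equals a nonzero scalar times the class of the direct sum.
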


\medskip
\noindent
We may grade $\Hall$ by associating to any quiver representation $V$ its dimension vector $\bv=(\text{dim }V_i)_{i \in I} \in \nn$, with respect to which \eqref{eqn:iso hall} becomes an isomorphism of $\nn$-graded algebras. For any $V,W\in \mathcal{C}$ with dimension vectors $\bv$ and $\bw$, respectively, the pairing $\langle \bv,\bw \rangle$ of~\eqref{eqn:lattice pairing} coincides with the Euler form (see~\cite{R1}):
$$
  \langle \bv,\bw \rangle = \sum_{k\geq 0} (-1)^k \dim \text{Ext}^k(V,W) = \dim\, \text{Hom}(V,W) - \dim \text{Ext}^1(V,W)
$$
with the second equality based on the vanishing
$$
  \text{Ext}^k(V,W)=0, \qquad \forall\, k\geq 2, \quad \forall\, V,W \in \text{Ob}(\mathcal{C})
$$


\medskip

\subsection{The Auslander--Reiten partial order}

It is well-known (\cite{Ga}) that indecomposable representations of $Q$ are in one-to-one correspondence with positive roots of $\fg$, i.e.\ up to isomorphism there is a single indecomposable representation $V_\alpha$ with dimension vector $\alpha\in \Delta^+$ (and there are no other indecomposables). For any two positive roots $\alpha$ and $\beta$, we have (\cite[Section 4]{Ri}) that
\begin{equation}
\label{eqn:Hom-or-Ext}
  \text{either} \quad \text{Hom}(V_\alpha,V_\beta) = 0 \quad \text{or} \quad \text{Ext}^1(V_\alpha,V_\beta) = 0
\end{equation}
As shown in \cite[Theorem 7]{Ri}, the isomorphism \eqref{eqn:iso hall} sends
$$
  e_{\alpha} \mapsto q^{\kappa_\alpha}\cdot [V_{\alpha}]
$$
for every positive root $\alpha$, where the root vectors $e_{\alpha}$ are defined with respect to a certain reduced decomposition of the longest word $w_0$ (see \cite[Section 13]{Ri} for how to construct this reduced decomposition starting from an orientation of the Dynkin diagram of $\fg$) and $\kappa_\alpha\in \BZ$ are not presently important to us (see~\cite[Section~3]{Ri}). Very interestingly, the behavior and commutation relations of the $e_{\alpha}$'s do not depend on the total order on $\Delta^+$ induced by the reduced decomposition, but rather only on the Auslander--Reiten partial order, which we will now recall.

\medskip

\begin{definition}
\label{def:ar}
The Auslander--Reiten quiver associated to $Q$ has the set $\Delta^+$ of positive roots as vertices, and has an arrow $\alpha \rightarrow \beta$ if
\begin{itemize}

\item
$\alpha \neq \beta$,

\item
$\langle \alpha, \beta \rangle > 0$,

\item
if $\gamma \in \Delta^+$ satisfies $\langle \alpha, \gamma \rangle > 0$ and $\langle \gamma,\beta \rangle>0$, then $\gamma \in \{\alpha,\beta\}$.

\end{itemize}
The Auslander--Reiten (AR for short) partial order on $\Delta^+$ is defined by the property that $\alpha > \beta$ if and only if there is a path from $\alpha$ to $\beta$ in the Auslander--Reiten quiver.
\end{definition}

\medskip
\noindent
The AR partial order has the following properties for all positive roots $\alpha \neq \beta$:
\begin{itemize}
	
\item
If $\alpha$ and $\beta$ are incomparable, then $\langle \alpha,\beta \rangle = 0$,
	
\item
If $\alpha < \beta$ then $\langle \beta,\alpha \rangle \geq 0 \geq  \langle \alpha, \beta \rangle$,
	
\item
If $\langle \alpha , \beta \rangle < 0$ or $\langle \beta , \alpha \rangle > 0$, then $\alpha < \beta$.

\end{itemize}
The first and the second properties follow from the third one, which the interested reader may find in \cite[Section~4]{Ri}. Moreover, the AR partial order is convex in the sense of \eqref{eqn:convex intro} (this claim is implicit in \cite[Section 13]{Ri}, who shows that any refinement of the AR partial order to a total order corresponds to a reduced decomposition of the longest word in the Weyl group; such total orders are known to be convex).


\medskip

\begin{lemma}
\label{lem:minimal ar}
If $\alpha < \beta$ is a minimal pair adding up to a positive root $\alpha+\beta$, then
\begin{equation}
\label{eqn:minimal ar}
  \langle \alpha,\beta\rangle = -1 \quad \text{and} \quad \langle \beta,\alpha\rangle = 0
\end{equation}
\end{lemma}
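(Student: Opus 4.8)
The plan is to express $\langle\alpha,\beta\rangle$ and $\langle\beta,\alpha\rangle$ in terms of the homological invariants of $V_\alpha$ and $V_\beta$, to reduce the statement to the computation of one Hall number, and then to invoke the Levendorskii--Soibelman relation attached to the minimal pair $\alpha<\beta$.

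First I would record that $(\alpha,\beta)=-1$: since $\alpha,\beta,\alpha+\beta$ are roots of a simply-laced system, $(\gamma,\gamma)=2$ for each of them, and expanding $(\alpha+\beta,\alpha+\beta)=2$ forces $(\alpha,\beta)=-1$. As $(\alpha,\beta)=\langle\alpha,\beta\rangle+\langle\beta,\alpha\rangle$ while $\alpha<\beta$ gives $\langle\beta,\alpha\rangle\ge0\ge\langle\alpha,\beta\rangle$ in the AR order, we may write $\langle\alpha,\beta\rangle=-m$ and $\langle\beta,\alpha\rangle=m-1$ for an integer $m\ge1$, and the Lemma amounts to $m=1$. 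Passing to the category $\mathcal C$, the dichotomy \eqref{eqn:Hom-or-Ext} combined with $\langle\alpha,\beta\rangle=-m<0$ gives $\text{Hom}(V_\alpha,V_\beta)=0$ and $\dim_{\BF_{q^2}}\text{Ext}^1(V_\alpha,V_\beta)=m$; likewise $\text{Ext}^1(V_\beta,V_\alpha)=0$, since otherwise \eqref{eqn:Hom-or-Ext} would give $\text{Hom}(V_\beta,V_\alpha)=0$ and hence $\langle\beta,\alpha\rangle<0$. Finally, because $\alpha+\beta\in\Delta^+$, the module $V_{\alpha+\beta}$ is the \emph{unique} indecomposable of dimension vector $\alpha+\beta$, and $\langle\alpha+\beta,\alpha+\beta\rangle=2+(\alpha,\beta)=1$ shows that $V_{\alpha+\beta}$ is a brick with no self-extensions (as are $V_\alpha$ and $V_\beta$, every indecomposable over a Dynkin quiver being exceptional); in particular $|\text{Aut}\,V_\gamma|=q^2-1$ for $\gamma\in\{\alpha,\beta,\alpha+\beta\}$.

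The heart of the argument, and the step I expect to be the main obstacle, is the claim that every nonzero class in $\text{Ext}^1(V_\alpha,V_\beta)$ has middle term isomorphic to $V_{\alpha+\beta}$. For a non-split $0\to V_\beta\to E\to V_\alpha\to 0$, the module $E$ has dimension vector $\alpha+\beta$, so it is $\cong V_{\alpha+\beta}$ or decomposable; in the decomposable case I would write $E=A\oplus B$ with $A,B\ne0$ and examine the four maps $V_\beta\to A$, $V_\beta\to B$, $A\to V_\alpha$, $B\to V_\alpha$ coming from the inclusion and the projection. If $V_\beta$ has zero component into one summand, or one summand has zero component to $V_\alpha$, then — using that $V_\alpha,V_\beta$ are bricks and $\text{Hom}(V_\alpha,V_\beta)=0$ — one finds $E\cong V_\alpha\oplus V_\beta$ with the sequence split, contrary to hypothesis. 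The remaining configuration, with all four maps nonzero, is the delicate one, to be eliminated by a direct analysis of the dimension vectors of $A$ and $B$, or, more safely, by invoking Ringel's explicit Hall polynomials (or the known existence of a short exact sequence $0\to V_\beta\to V_{\alpha+\beta}\to V_\alpha\to 0$ for a minimal pair). Granting the claim, exactly $|\text{Ext}^1(V_\alpha,V_\beta)|-1=q^{2m}-1$ of the extension classes give $V_{\alpha+\beta}$ as middle term, so Riedtmann's formula, together with $|\text{Aut}\,V_\gamma|=q^2-1$ and $\text{Hom}(V_\alpha,V_\beta)=0$, yields
$$
  g^{V_{\alpha+\beta}}_{V_\alpha,V_\beta}\;=\;\frac{q^{2m}-1}{q^{2}-1}\;=\;1+q^{2}+\dots+q^{2m-2}\,,\qquad g^{V_{\alpha+\beta}}_{V_\beta,V_\alpha}\;=\;0\,,
$$
where $g^{X}_{V,W}:=\#\{Y\subset X:\ Y\simeq W,\ X/Y\simeq V\}$ as in \eqref{eqn:hall}, the second equality holding because $\text{Ext}^1(V_\beta,V_\alpha)=0$.

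It then remains to feed this into $\Hall\cong\uup$. Since the convex order refining the AR order comes from a reduced decomposition of $w_0$, the minimal pair $\alpha<\beta$ obeys the analogue of \eqref{eqn:commutation intro minimal} for $\uup$, whose right-hand side is a single term: $[V_\alpha][V_\beta]-q^{s}[V_\beta][V_\alpha]=c\,[V_{\alpha+\beta}]$ for the appropriate integer $s$, with $c\in\BZ[q,q^{-1}]^\times$. Extracting the coefficient of $[V_{\alpha+\beta}]$, using $g^{V_{\alpha+\beta}}_{V_\beta,V_\alpha}=0$, and restoring the twist $q^{\langle\cdot,\cdot\rangle}$ of \eqref{eqn:hall}, one obtains $c=(\text{a monomial in }q)\cdot g^{V_{\alpha+\beta}}_{V_\alpha,V_\beta}=(\text{a monomial in }q)\cdot(1+q^2+\dots+q^{2m-2})$. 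As the right-hand side is a unit of $\BZ[q,q^{-1}]$ precisely when $m=1$, we conclude $m=1$, i.e.\ $\langle\alpha,\beta\rangle=-1$ and $\langle\beta,\alpha\rangle=0$. The hypothesis that $\alpha+\beta$ be a root (not merely a sum of positive roots) is used only through the uniqueness of $V_{\alpha+\beta}$ in the crux step, while the minimality of the pair enters only through the one-term shape of the Levendorskii--Soibelman relation.
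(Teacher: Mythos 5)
Your overall route is genuinely different from the paper's: after the common setup ($(\alpha,\beta)=-1$, $\mathrm{Hom}(V_\alpha,V_\beta)=0$, $\dim\mathrm{Ext}^1(V_\alpha,V_\beta)=m$, $\mathrm{Ext}^1(V_\beta,V_\alpha)=0$), the paper shows $m=1$ by taking two linearly independent extensions with middle term $V_{\alpha+\beta}$, forming their Baer sum, identifying its kernel as $V_\beta\oplus V_{\alpha+\beta}$ (via a second combinatorial claim), and using $\mathrm{End}(V_{\alpha+\beta})=\BF_{q^2}$ to force the two extensions to be proportional. You instead count the Hall number $g^{V_{\alpha+\beta}}_{V_\alpha,V_\beta}=1+q^2+\dots+q^{2m-2}$ via Riedtmann's formula and appeal to the Levendorskii--Soibelman relation, transported through Ringel's identification $e_\gamma\mapsto[V_\gamma]$, to force this coefficient to be a unit of $\BZ[q,q^{-1}]$. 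That endgame is a legitimate and rather elegant alternative, and it would be correct if its input were established.

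The gap is exactly the step you flag as ``the main obstacle'' and then do not close: the claim that \emph{every} nonzero class in $\mathrm{Ext}^1(V_\alpha,V_\beta)$ has middle term $V_{\alpha+\beta}$. Your four-maps analysis only disposes of the degenerate configurations; in the remaining case one decomposes the middle term into indecomposables $S_1\oplus\dots\oplus S_k$ and finds, via \eqref{eqn:Hom-or-Ext} and the properties of the AR order, that their dimension vectors satisfy $\alpha<\gamma_a<\beta$ and $\sum_a\gamma_a=\alpha+\beta$ --- and ruling this out for $k\geq 2$ is precisely the paper's Claim \ref{claim:1}, whose proof uses the minimality of the pair directly (for $k=2$) together with a root-length computation and convexity (for $k\geq3$). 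Neither of your proposed substitutes fills this hole: the existence of one short exact sequence $0\to V_\beta\to V_{\alpha+\beta}\to V_\alpha\to 0$ says nothing about the middle terms of the \emph{other} extension classes, and ``Ringel's explicit Hall polynomials'' is an appeal to exactly the kind of computation being asked for. Relatedly, your closing sentence --- that minimality enters only through the one-term shape of the Levendorskii--Soibelman relation --- is not accurate: minimality is indispensable for the crux step itself, since for a non-minimal pair $\alpha<\alpha'<\beta'<\beta$ with $\alpha'+\beta'=\alpha+\beta$, a decomposable middle term $V_{\alpha'}\oplus V_{\beta'}$ is not excluded by any of your other hypotheses.
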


\medskip

\begin{proof}
Let us assume that the positive roots $\alpha$, $\beta$ and $\alpha+\beta$ correspond to indecomposable quiver representations $V$, $W$ and $X$, respectively.
Then, combining $\langle \alpha,\beta\rangle\leq 0\leq \langle \beta,\alpha \rangle$ with $-1=(\alpha,\beta)=\langle \alpha,\beta\rangle+\langle \beta,\alpha \rangle$ and~\eqref{eqn:Hom-or-Ext}, we get:
$$
  \dim \text{Ext}^1(V,W) = -\langle \alpha,\beta\rangle = t
$$
and
$$
  \dim \text{Hom}(W,V) = \langle \beta,\alpha\rangle = t-1
$$
for some $t \geq 1$. We will assume for the purpose of contradiction that $t \geq 2$, which implies that the space of extensions $\text{Ext}^1(V,W)$ is at least 2-dimensional. For any non-trivial extension
$$
  0 \rightarrow W \rightarrow S_1 \oplus \dots \oplus S_k \rightarrow V \rightarrow 0
$$
(for various indecomposables $S_1,\dots,S_k$) the fact that $V$ and $W$ are indecomposable implies that $\text{Hom}(W,S_a)$ and $\text{Hom}(S_a,V)$ are non-zero for all $1\leq a\leq k$. Therefore, the dimension vectors $\gamma_a$ of the $S_a$ are contained between $\alpha$ and $\beta$, due to the assumption that the extension is non-trivial as well as the vanishing of $\text{Ext}^1$ in~\eqref{eqn:Hom-or-Ext} and the third property of the AR partial order above. The following result will be proved at the end of the present proof.

\medskip

\begin{claim}
\label{claim:1}
If $\alpha< \beta$ is a minimal pair adding up to a positive root $\alpha+\beta$, there do not exist positive roots $\alpha < \gamma_1,\dots,\gamma_k < \beta$ with $k >1$ which add up to $\alpha+\beta$.
\end{claim}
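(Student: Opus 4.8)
\textbf{Proof plan for Claim~\ref{claim:1}.} The plan is to argue by induction on $k$, for a fixed minimal pair $\alpha<\beta$, the idea being to fuse two of the summands into a single positive root that still lies strictly between $\alpha$ and $\beta$, thereby shortening the decomposition until we contradict the definition of a minimal pair. The fusion step rests on two elementary facts about type ADE root systems: first, if two positive roots $\gamma,\gamma'$ satisfy $(\gamma,\gamma')<0$ then in fact $(\gamma,\gamma')=-1$ and $\gamma+\gamma'$ is again a positive root; and second, $(\gamma,\gamma')\neq 0$ forces $\gamma$ and $\gamma'$ to be comparable in the AR order, since the first listed property of that order makes incomparable roots orthogonal (both Euler pairings vanish, hence so does the symmetric form). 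Given comparability, say $\gamma<\gamma'$, convexity of the AR order \eqref{eqn:convex intro} then places $\gamma+\gamma'$ strictly between $\gamma$ and $\gamma'$.

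Here is how I would run the induction. Suppose $\alpha<\gamma_1,\dots,\gamma_k<\beta$ are positive roots with $k>1$ and $\gamma_1+\dots+\gamma_k=\alpha+\beta$. Since $\sum_{a,b}(\gamma_a,\gamma_b)=(\alpha+\beta,\alpha+\beta)=2$ while $\sum_a(\gamma_a,\gamma_a)=2k$, we obtain $\sum_{a\neq b}(\gamma_a,\gamma_b)=2-2k<0$, so some pair $a\neq b$ satisfies $(\gamma_a,\gamma_b)=-1$; after relabelling, assume $\gamma_a<\gamma_b$ in the AR order. Then $\gamma':=\gamma_a+\gamma_b$ is a positive root and, by the observation above, $\gamma_a<\gamma'<\gamma_b$, whence $\alpha<\gamma'<\beta$. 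If $k=2$ then $\gamma'=\alpha+\beta$, so $\alpha<\gamma_a<\gamma_b<\beta$ together with $\gamma_a+\gamma_b=\alpha+\beta$ contradicts the assumption that $\alpha<\beta$ is a minimal pair; this is the base case. If $k\geq 3$ then $\gamma'\neq\alpha+\beta$, and replacing the two summands $\gamma_a,\gamma_b$ by the single summand $\gamma'$ exhibits $\alpha+\beta$ as a sum of $k-1\geq 2$ positive roots, all strictly between $\alpha$ and $\beta$, contradicting the induction hypothesis. This closes the induction.

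I do not expect a genuine obstacle; the only delicate bookkeeping is in the fusion step, namely producing a pair of summands whose sum is a root (handled by the numerical count) and checking that this fused root remains strictly between $\alpha$ and $\beta$ (handled by comparability plus convexity). Two minor points deserve a remark: the roots $\gamma_a$ and $\gamma_b$ are automatically distinct, since $2\gamma$ is never a root and so $\gamma_a+\gamma_b$ could not otherwise be one; and in the inductive step the fused root $\gamma'$ may happen to coincide with one of the remaining $\gamma_i$, which is harmless because Claim~\ref{claim:1} concerns decompositions into lists of positive roots counted with multiplicity. The same scheme works verbatim if one instead fixes a convex total order refining the AR order and reads all the inequalities with respect to it — then comparability of $\gamma_a$ and $\gamma_b$ is automatic and the numerical count is needed only to guarantee that some pair of summands fuses to a root.
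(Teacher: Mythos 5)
Your proof is correct and follows essentially the same route as the paper's: the identity $2=(\alpha+\beta,\alpha+\beta)=2k+2\sum_{a<b}(\gamma_a,\gamma_b)$ produces a pair with $(\gamma_a,\gamma_b)=-1$, which is fused into a positive root to contradict minimality (the paper phrases this as a minimal-$k$ counterexample rather than an induction on $k$, which is the same argument). Your explicit verification that the fused root stays strictly between $\alpha$ and $\beta$ --- non-orthogonality forces comparability in the AR order, and then convexity applies --- is a detail the paper leaves implicit.
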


\medskip
\noindent
As a consequence of Claim~\ref{claim:1}, we conclude that all non-zero elements in $\text{Ext}^1(V,W)$ are of the form
\begin{equation}
\label{eqn:ses 1}
  0 \rightarrow W \xrightarrow{f} X \xrightarrow{g} V \rightarrow 0
\end{equation}
Since we assumed that the space of extensions in question is at least 2-dimensional, let us consider another extension
\begin{equation}
\label{eqn:ses 2}
  0 \rightarrow W \xrightarrow{f'} X \xrightarrow{g'} V \rightarrow 0
\end{equation}
which is linearly independent from \eqref{eqn:ses 1}. The Baer sum of these two extensions
\begin{equation*}
  0 \rightarrow W \xrightarrow{f''} Q \xrightarrow{g''} V \rightarrow 0
\end{equation*}
which is defined by setting $Q = \text{Ker}(g,g')/\text{Im}(f,f')$ with
$$
  W \xrightarrow{(f,f')} X \oplus X \xrightarrow{(g,g')} V
$$
is nonzero. By Claim \ref{claim:1} and the sentence immediately following it, we therefore have $Q \simeq X$. Consider the quiver representation $\text{Ker}(g,g')=T_1\oplus \dots \oplus T_l$ (for various indecomposables $T_a$'s) which has dimension $\alpha+2\beta$. Invoking \eqref{eqn:Hom-or-Ext} and the third property of the AR partial order stated after Definition \ref{def:ar}, we conclude that $\dim X\leq \dim T_a\leq \dim W$ for all $1\leq a\leq l$, because $\text{Hom}(T_a,X)\ne 0$ and either $\text{Hom}(W,T_a)\ne 0$ or $T_a$ is an indecomposable summand in $Q\simeq X$ so that $T_a\simeq X$. We can thus apply the following analogue of Claim \ref{claim:1}, which will be proved several paragraphs down.

\medskip

\begin{claim}
\label{claim:2}
If $\alpha< \beta$ is a minimal pair adding up to a positive root $\alpha+\beta$, there do not exist positive roots $\alpha +\beta < \gamma_1,\dots,\gamma_k < \beta$ (respectively $\alpha < \gamma_1,\dots,\gamma_k < \alpha+\beta$) which add up to $\alpha+2\beta$ (respectively $2\alpha+\beta$).
\end{claim}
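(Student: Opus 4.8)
### Proof plan for Claim~\ref{claim:2}

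\textbf{Setup and reduction.} The statement is the ``one size up'' analogue of Claim~\ref{claim:1}, and I expect to prove it by the same representation-theoretic mechanism, so the first task is to recycle as much of the Claim~\ref{claim:1} machinery as possible. Let me treat the case $\alpha+2\beta$ (the case $2\alpha+\beta$ is symmetric under swapping the roles of $\alpha$ and $\beta$ and reversing arrows in the AR quiver). Suppose for contradiction that there are positive roots $\gamma_1,\dots,\gamma_k$ with $\alpha+\beta < \gamma_a < \beta$ for all $a$ and $\sum_a \gamma_a = \alpha+2\beta$. Since all $\gamma_a$ lie strictly between $\alpha+\beta$ and $\beta$ in the AR order, the first step is to exploit convexity of the AR order (stated after Definition~\ref{def:ar}) together with the third property of the AR partial order: any root strictly between $\alpha+\beta$ and $\beta$ must satisfy $\langle \alpha+\beta, \gamma_a\rangle \geq 0 \geq \langle \gamma_a, \beta\rangle$, and by Lemma~\ref{lem:minimal ar} plus $(\alpha,\beta)=-1$ the ``interval'' $(\alpha+\beta,\beta)$ is tightly constrained. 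In particular I would first show that $k\ge 2$ is forced (if $k=1$ then $\gamma_1 = \alpha+2\beta$, but then $\alpha+2\beta$ strictly between $\alpha+\beta$ and $\beta$ contradicts $(\alpha+2\beta,\beta) = (\alpha,\beta)+2(\beta,\beta) = -1+4 = 3 \notin\{-1,0,1\}$, so $\alpha+2\beta$ cannot even be a root unless... — actually one must check $\alpha+2\beta$ is a root at all, which in ADE forces $\beta$ long relative to... no, all roots same length; here $(\alpha+2\beta,\alpha+2\beta) = 2 - 4 + 8 = 6$, impossible, so $\alpha+2\beta$ is never a root and $k\ge 2$ genuinely). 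Wait — if $\alpha+2\beta$ is itself not a root, the claim is about a sum of $k>1$ roots equalling a non-root vector, which is still a nontrivial combinatorial constraint.

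\textbf{The representation-theoretic core.} Here is where the real work lies. As in the proof of Claim~\ref{claim:1} and the surrounding argument, I would pass to indecomposable quiver representations: $\gamma_a \leftrightarrow S_a$, and the vector $\alpha+2\beta$ is realized as $\operatorname{Ker}(g,g') = T_1\oplus\dots\oplus T_l$ with each $\dim X \le \dim T_a \le \dim W$ in the AR order (this containment is exactly what the main proof established before invoking this claim). The key point is that the $T_a$ are sandwiched in the AR order, and I want to derive a contradiction from the existence of $\ge 2$ of them (or from a single one that is too big). The strategy: use the convexity of the AR order to control $\operatorname{Hom}$ and $\operatorname{Ext}^1$ between the $T_a$ and between $T_a$ and $V, W$, then compute the Euler form $\langle \alpha+2\beta, \alpha+2\beta\rangle$ in two ways — directly (it equals $2 - \sum_{\oij}$ applied to the dimension vector, always $\le$ some bound since it's $\dim\operatorname{Hom}(T,T) - \dim\operatorname{Ext}^1(T,T)$ for $T = \bigoplus T_a$) and via the decomposition $\sum_a \langle\gamma_a,\gamma_a\rangle + \sum_{a\ne b}\langle\gamma_a,\gamma_b\rangle$. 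Since each $\gamma_a$ is a real root, $\langle\gamma_a,\gamma_a\rangle = 1$; and because all $\gamma_a$ lie in the interval $(\alpha+\beta,\beta)$, the cross terms $\langle\gamma_a,\gamma_b\rangle$ are forced to be $\ge 0$ for $a<b$ (reading the interval left-to-right in the total order refining AR) and $\le 0$ for $a>b$. The contradiction should come from $\langle\alpha+2\beta,\alpha+2\beta\rangle$ being too large: $\ge k \ge 2$ from the diagonal terms plus a nonnegative contribution... but in ADE, $\langle\bv,\bv\rangle \le 1$ for any $\bv$ that could be a sum of roots of bounded size — no, that's false in general. The cleaner route: $\langle\alpha+2\beta,\alpha+2\beta\rangle = \tfrac12(\alpha+2\beta,\alpha+2\beta) = 3$, and simultaneously $\langle\alpha+2\beta,\alpha+2\beta\rangle = \sum_a\langle\gamma_a,\gamma_a\rangle + \sum_{a\ne b}\langle\gamma_a,\gamma_b\rangle = k + (\text{cross terms})$; combining with bounds on the cross terms from convexity pins down $k$ and the $\gamma_a$ so tightly that one can enumerate the possibilities and rule each out by a dimension count on $\operatorname{Hom}$ and $\operatorname{Ext}^1$ inside the short exact sequence $0\to W\to \operatorname{Ker}(g,g')\to$ (image in $X\oplus X$) coming from the Baer sum construction.

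\textbf{Closing the loop and the symmetric case.} Once the $\alpha+2\beta$ case is done, the $2\alpha+\beta$ case follows by applying the already-established duality: the AR quiver of the opposite orientation $Q^{\mathrm{op}}$ reverses all arrows, swaps $\operatorname{Hom}\leftrightarrow\operatorname{Ext}^1$, and reverses the AR partial order; under this, a minimal pair $\alpha<\beta$ for $Q$ becomes a minimal pair $\beta<\alpha$ for $Q^{\mathrm{op}}$, and the statement ``no $\alpha<\gamma_a<\alpha+\beta$ summing to $2\alpha+\beta$'' transforms into the $\alpha+2\beta$-type statement already proved. Alternatively, and perhaps more cleanly, I would mirror the proof verbatim with $\alpha$ and $\beta$ interchanged, since nothing in the core argument used the inequality $\alpha<\beta$ except through the Euler-form signs, which flip consistently. \textbf{The main obstacle} I anticipate is the enumeration step: controlling exactly how many indecomposable summands $T_a$ can appear and bounding the cross terms $\langle\gamma_a,\gamma_b\rangle$ using only convexity of the AR order plus the constraint that everything lies in the narrow AR-interval $(\alpha+\beta,\beta)$ (resp.\ $(\alpha,\alpha+\beta)$). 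One has to be careful that convexity of the AR order gives $\alpha+\beta<\gamma_a<\beta$ implies, via the third AR property, sign conditions on $\langle\alpha+\beta,\gamma_a\rangle$ and $\langle\gamma_a,\beta\rangle$ but \emph{not} immediately on $\langle\gamma_a,\gamma_b\rangle$ for two interior roots — that comparison needs the fixed refining total order and a short argument that in that total order the $\gamma_a$'s are themselves linearly ordered with the expected Euler-form signs. Getting this bookkeeping exactly right, rather than the final Euler-form arithmetic, is where the subtlety lies.
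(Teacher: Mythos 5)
There is a genuine gap. Your setup is on track: the contradiction framework, the observation that $\alpha+2\beta$ is not a root (so $k\geq 2$), and the quadratic-form identity $6=(\alpha+2\beta,\alpha+2\beta)=2k+2\sum_{a<b}(\gamma_a,\gamma_b)$ are all exactly how the paper begins. But the identity alone only disposes of $k>3$ (a negative cross term $(\gamma_a,\gamma_b)=-1$ lets you merge $\gamma_a+\gamma_b$ into a single root in the same interval, contradicting minimality of $k$); for $k=3$ the cross terms sum to zero and no contradiction follows, and $k=2$ is not touched by it at all. These two cases are the entire content of the claim, and your proposal leaves them at ``enumerate the possibilities and rule each out by a dimension count,'' which is not an argument. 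The paper kills $k=2$ by the \emph{linear} condition $(\gamma_1+\gamma_2,\beta)=(\alpha+2\beta,\beta)=3$, impossible since $(\gamma,\beta)\leq 1$ for any positive root $\gamma\neq\beta$. For $k=3$ it uses the two linear conditions $(\sum_a\gamma_a,\beta)=3$ and $(\sum_a\gamma_a,\alpha+\beta)=3$ to force $(\gamma_a,\beta)=(\gamma_a,\alpha+\beta)=1$ for every $a$, hence $\beta-\gamma_a$ and $\alpha+\beta-\gamma_a$ are roots; a height count shows at most one $\gamma_a$ has $\height(\gamma_a)\geq\height(\alpha+\beta)$, and each of the two resulting subcases is reduced, via convexity, either to the impossibility of $\delta_1+\delta_2+\delta_3=2\alpha+\beta$ with all $\delta_a<\alpha<\alpha+\beta$, or to a configuration $\gamma_1+\gamma_2+\varepsilon=\alpha+\beta$ forbidden by Claim \ref{claim:1}. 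None of these ideas --- the linear pairing conditions, the height dichotomy, the reduction back to Claim \ref{claim:1} --- appear in your proposal, and they are not recoverable from the quadratic form plus convexity alone.

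A secondary issue: you steer the core of the argument toward quiver representations (Baer sums, $\mathrm{Hom}/\mathrm{Ext}^1$ dimension counts on the summands $T_a$ of $\mathrm{Ker}(g,g')$). That machinery belongs to the surrounding proof of Lemma \ref{lem:minimal ar}, which \emph{invokes} Claim \ref{claim:2}; the claim itself is proved purely root-combinatorially, and importing the representation theory here risks circularity and in any case does not supply the missing case analysis. Also, your asserted sign pattern for the Euler cross terms is reversed: by the properties listed after Definition \ref{def:ar}, $\gamma_a<\gamma_b$ gives $\langle\gamma_a,\gamma_b\rangle\leq 0\leq\langle\gamma_b,\gamma_a\rangle$, not the inequalities you wrote --- though this is harmless since only the symmetrized pairing $(\gamma_a,\gamma_b)$ enters the quadratic-form identity.
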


\medskip
\noindent
Therefore and with the convexity in mind, we have $\text{Ker}(g,g')\simeq W \oplus X$, so we must have a short exact sequence
$$
  0 \rightarrow W \oplus X \rightarrow X \oplus X \xrightarrow{(g,g')} V \rightarrow 0
$$
Since the only endomorphisms of $X$ are scalars (this is true for all indecomposable representations, due to~\eqref{eqn:Hom-or-Ext} and the equality $\langle \gamma,\gamma\rangle = \tfrac{1}{2} (\gamma,\gamma) = 1$ for any positive root $\gamma$), we conclude that $g$ and $g'$ are scalar multiples of each other. Similarly, one proves that $f$ and $f'$ are scalar multiples of each other. But this contradicts the fact that the short exact sequences \eqref{eqn:ses 1} and \eqref{eqn:ses 2} are linearly independent.

\medskip
\noindent
Let us now prove Claim \ref{claim:1}. Assume for the purpose of contradiction that such $\gamma_1,\dots,\gamma_k$ existed, and choose a minimal $k \geq 2$ with said property. If $k = 2$, then this violates the minimality of the pair $\alpha < \beta$, hence $k \geq 3$. However, the fact that
$$
  2 = (\alpha+\beta,\alpha+\beta) = (\gamma_1+\dots+\gamma_k,\gamma_1+\dots +\gamma_k) = 2k + 2\sum_{a<b} (\gamma_a,\gamma_b)
$$
implies that there must exist $a < b$ with $(\gamma_a,\gamma_b) = -1$. Therefore, $\gamma_a + \gamma_b$ is a positive root, which contradicts the minimality of $k$.

\medskip
\noindent
Let us now prove Claim \ref{claim:2}. Assume for the purpose of contradiction that there exist positive roots
$$
  \alpha+\beta < \gamma_1 ,\dots,\gamma_k < \beta
$$
which add up to $\alpha+2\beta$ (the non-existence of $\alpha < \gamma_1 ,\dots,\gamma_k < \alpha+\beta$ which add up to $2\alpha+\beta$ is analogous, and is left as an exercise to the reader). Let us assume that $k$ is minimal with this property. Since $\alpha$, $\beta$ and $\alpha+\beta$ are positive roots, $\alpha+2\beta$ is not a positive root (as we cannot have all four of them having the same length), hence $k\geq 2$. If $k = 2$, then we have
$$
  (\gamma_1+\gamma_2,\beta) = (\alpha+2\beta,\beta) = 3
$$
which is impossible because $(\gamma,\beta) \leq 1$ for every positive root $\gamma \neq \beta$. Therefore, $k \geq 3$ and we have
$$
  6 = (\alpha+2\beta,\alpha+2\beta) = (\gamma_1+\dots+\gamma_k,\gamma_1+\dots +\gamma_k) = 2k + 2\sum_{a<b} (\gamma_a,\gamma_b)
$$
If $k > 3$, then there would exist $a < b$ with $(\gamma_a,\gamma_b) = -1$, hence $\gamma_a+\gamma_b$ is a positive root, which contradicts the minimality of $k$. Therefore, we must have $k=3$. However,
$$
  (\gamma_1+\gamma_2+\gamma_3,\beta) = (\alpha+2\beta,\beta) = 3
$$
and
$$
  (\gamma_1+\gamma_2+\gamma_3,\alpha+\beta) = (\alpha+2\beta,\alpha+\beta) = 3
$$
implies that $(\gamma_a,\beta) =  (\gamma_a,\alpha+\beta) =1$ for all $a \in \{1,2,3\}$. Therefore, for every $a \in \{1,2,3\}$, each of $\beta-\gamma_a$ and $\alpha+\beta-\gamma_a$ is a (positive or negative) root. Let $\height(\gamma)\in \mathbb{Z}_{>0}$ denote the height of a positive root $\gamma\in \Delta^+$. Because $\gamma_1+\gamma_2+\gamma_3=\alpha+2\beta$, we cannot have two or more of the $\gamma_a$'s of height $\geq \height(\alpha+\beta)$, and so either

\medskip

\begin{itemize}[leftmargin=*]

\item
$\height(\gamma_1),\height(\gamma_2),\height(\gamma_3)<\height(\alpha+\beta)$. Thus, we have that $\alpha+\beta-\gamma_a = \delta_a$ is a positive root for all $a \in \{1,2,3\}$, and we must have $\delta_a < \alpha$ for all $a$ due to the minimality of the pair $\alpha < \beta$ and the convexity. However, the fact that $\delta_1+\delta_2+\delta_3 = 2\alpha+\beta$ but $\delta_1,\delta_2,\delta_3 < \alpha < \alpha+\beta$ yields a contradiction.\footnote{A more general statement is true for convex orders $<$ on $\Delta^+$: one cannot have $\alpha_1 < \dots < \alpha_k < \beta_1 < \dots < \beta_l$ such that $\alpha_1+\dots+\alpha_k = \beta_1+\dots+\beta_l$.}

\medskip

\item
Exactly one of the $\gamma_a$'s has a height larger than $\height(\alpha+\beta)$. Thus, up to relabeling, we may assume that $\alpha+\beta-\gamma_1 = \delta_1$, $\alpha+\beta-\gamma_2 = \delta_2$ but $\delta_3 = \gamma_3 - \alpha - \beta$ for positive roots $\delta_1,\delta_2,\delta_3$. As before, we must have
$$
  \delta_1,\delta_2 < \alpha < \alpha+\beta < \gamma_1,\gamma_2,\gamma_3 < \beta
$$
and $\delta_3 > \gamma_3$ by convexity. However, as explained before, $\varepsilon = \gamma_3 - \beta = \delta_3 + \alpha$ is also a positive root. Furthermore, we have $\varepsilon > \alpha$ and $\varepsilon < \gamma_3 < \beta$ by convexity. But then the equality
$$
  \gamma_1+\gamma_2+\varepsilon = \gamma_1+\gamma_2+\gamma_3 - \beta = \alpha+\beta
$$
yields a contradiction to Claim \ref{claim:1}.

\end{itemize}

\medskip
\noindent
This completes our proof of Claim~\ref{claim:2}, and hence also of Lemma~\ref{lem:minimal ar}.
\end{proof}


\medskip

\subsection{Specialization maps for the AR partial order}
\label{sub:specialization ar}

Let $\tau\colon I \rightarrow \BZ$ be any function with the property that $\tau(i) = \tau(j)+1$ if there exists an arrow from $i$ to $j$ in the quiver $Q$. Such a map exists because finite type Dynkin diagrams do not have cycles, and it is unique up to a simultaneous translation. In Definition \ref{def:shuf} (and the paragraph following it), we noticed that elements of the shuffle algebra are defined as a certain Laurent polynomial $r$ divided by certain linear factors. More precisely, elements of the shuffle algebra $R \in \CS_\bv$ are of the form
\begin{equation}
\label{eqn:shuf ar}
  R(\dots,z_{i1},\dots,z_{iv_i},\dots) = \frac {r(\dots,z_{i1},\dots,z_{iv_i},\dots)}{\prod_{\oij} \prod_{1 \leq b \leq v_i}^{1\leq c \leq v_j} (z_{ib} - z_{jc})}
\end{equation}
where $r$ is a color-symmetric Laurent polynomial which satisfies the following three-variable wheel conditions whenever $i$ and $j$ are connected by an edge
\begin{equation}
\label{eqn:wheel ar}
  r(\dots,z_{ib},\dots) \Big|_{z_{i1} = qz_{j1}, z_{i2} = q^{-1}z_{j1}} = \, 0
\end{equation}
(indeed, \eqref{eqn:wheel ar} is just the particular case of \eqref{eqn:wheel} when $a_{ij} = -1)$.

\medskip

\begin{definition}
\label{def:specialization ar}
For any $\bv = (v_i)_{i \in I} \in \nn$, define the specialization map
\begin{equation}
\label{eqn:specialization ar}
  \CS_{\bv} \xrightarrow{\espec^{(x)}_{\bv}} \BQ(q)[x,x^{-1}],
\end{equation}
$$
  \espec^{(x)}_\bv(R) = \gamma_{\bv}^{(x)} \cdot r(\dots,z_{i1},\dots,z_{iv_i},\dots) \Big|_{z_{ib} \mapsto x q^{\tau(i)}, \forall i,b}
$$
where $r$ is the Laurent polynomial associated to $R \in \CS_{\bv}$ by formula \eqref{eqn:shuf ar} \footnote{Compared with our general expectation in \eqref{eqn:collection}, the specialization map \eqref{eqn:specialization ar} sets all the variables $z_{ib}$ for a given $i\in I$  to one and the same power of $q$ (times $x$).} and
\begin{equation}
\label{eqn:formula gamma}
  \gamma_{\bv}^{(x)} = q^{-\sum_{\oij} \tau(i) v_iv_j } \left[ x q^{-\frac 12} (q-q^{-1}) \right]^{-\bv \cdot \bv}
\end{equation}
\end{definition}

\noindent
Because of $\gamma_{\bv}^{(x)}$, the map \eqref{eqn:specialization ar} actually takes values in $\BQ(q^{\frac 12})[x,x^{-1}]$, but we will ignore this technicality, as it will produce no meaningful effects in the present paper.


\medskip

\subsection{A key result}

In the present Subsection, we will prove some key results pertaining to the specialization maps \eqref{eqn:specialization ar}. Let $\bv \cdot \bw = \sum_{i \in I} v_i w_i$ for all $\bv,\bw \in \nn$.

\medskip

\begin{lemma}
\label{lem:key}
For any $\bv,\bw \in \nn$ and any $R \in \CS_{\bv+\bw}$, we have
\begin{equation}
\label{eqn:count}
\begin{split}
  & \frac {\espec_{\bv}^{(x)} \otimes \espec_{\bw}^{(y)}(R)}{\prod_{i,j \in I} \zeta_{ij} \left( \frac {xq^{\tau(i)}}{yq^{\tau(j)}} \right)^{v_iw_j}} = \\
  & \quad \quad \frac{\gamma_{\bv}^{(x)} \gamma_{\bw}^{(y)}  \cdot r(\dots, xq^{\tau(i)},\dots,yq^{\tau(j)},\dots)}
       {(x-y)^{-\langle \bv,\bw \rangle} (y q^2 - x)^{\bv \cdot \bw - \langle \bw,\bv \rangle} (x - y q^{-2})^{\bv \cdot \bw}q^{\sum_{\oij}(\tau(i)v_iw_j+\tau(j)v_jw_i)}}
\end{split}
\end{equation}
In the formula above, $\espec_{\bv}^{(x)} \otimes \espec_{\bw}^{(y)}(R)$ means that we split the variables of $R$ into two sets, to which we separately apply the specialization maps $\espec_{\bv}^{(x)}$ and $\espec_{\bw}^{(y)}$.
\end{lemma}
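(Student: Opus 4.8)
The plan is to make the left-hand side of \eqref{eqn:count} completely explicit, so that the asserted equality collapses to an elementary identity between monomials in the linear forms $x-y$, $xq-y$, $x-yq$, $x-yq^{-2}$, $yq^2-x$ and in $q$. First I would unwind the definition of $\espec^{(x)}_\bv\otimes\espec^{(y)}_\bw(R)$. Write $R=r/D$ with $D=\prod_{\oij}\prod_{b\le v_i+w_i}^{c\le v_j+w_j}(z_{ib}-z_{jc})$ as in \eqref{eqn:shuf ar}, split the variables of $R$ into an ``$x$-group'' $A$ (the $z_{i1},\dots,z_{iv_i}$ of each color $i$) and a ``$y$-group'' $B$ (the remaining $w_i$ variables of each color), and write accordingly $D=D_A\,D_B\,D_{AB}$, where $D_{AB}$ is the product of the ``cross'' factors $(z_{ib}-z_{jc})$ for which $z_{ib}$ and $z_{jc}$ lie in different groups. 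Writing $(\cdot)\big|_{\mathrm{spec}}$ for the substitution sending the color-$i$ variables of $A$ (resp.\ of $B$) to $xq^{\tau(i)}$ (resp.\ to $yq^{\tau(i)}$), the point is that $\espec^{(x)}_\bv$ and $\espec^{(y)}_\bw$ pass from a shuffle element to its numerator before specializing, so that the within-group factors $D_A,D_B$ get cancelled and only $D_{AB}$ survives:
$$
  \espec^{(x)}_\bv\otimes\espec^{(y)}_\bw(R)\ =\ \gamma_\bv^{(x)}\gamma_\bw^{(y)}\cdot\frac{r\big|_{\mathrm{spec}}}{D_{AB}\big|_{\mathrm{spec}}}\,,
$$
where $r\big|_{\mathrm{spec}}$ is exactly the expression $r(\dots,xq^{\tau(i)},\dots,yq^{\tau(j)},\dots)$ of the lemma.

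Granting this, the common factor $\gamma_\bv^{(x)}\gamma_\bw^{(y)}\,r\big|_{\mathrm{spec}}$ cancels from both sides of \eqref{eqn:count}, and the lemma becomes the identity
\begin{multline*}
  D_{AB}\big|_{\mathrm{spec}}\cdot\prod_{i,j\in I}\zeta_{ij}\!\left(\frac{xq^{\tau(i)}}{yq^{\tau(j)}}\right)^{v_iw_j}\ = \\
  (x-y)^{-\langle\bv,\bw\rangle}\,(yq^2-x)^{\bv\cdot\bw-\langle\bw,\bv\rangle}\,(x-yq^{-2})^{\bv\cdot\bw}\,q^{\sum_{\oij}(\tau(i)v_iw_j+\tau(j)v_jw_i)}\,.
\end{multline*}
I would prove this by splitting the product over $i,j\in I$ into three cases. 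For $i=j$: $\zeta_{ii}(x/y)=\frac{x-yq^{-2}}{x-y}$, contributing $(x-yq^{-2})^{\bv\cdot\bw}(x-y)^{-\bv\cdot\bw}$ after multiplying over $i$. For non-adjacent $i\ne j$: $\zeta_{ij}\equiv 1$ and there are no cross factors between colors $i$ and $j$. For an adjacent pair $\{i,j\}$ carrying an arrow $i\to j$ of $Q$ (so $\tau(i)=\tau(j)+1$): a direct computation gives $\zeta_{ij}(xq^{\tau(i)}/yq^{\tau(j)})=\frac{q(x-y)}{xq-y}$ and $\zeta_{ji}(xq^{\tau(j)}/yq^{\tau(i)})=\frac{x-yq^2}{x-yq}$, while the part of $D_{AB}\big|_{\mathrm{spec}}$ indexed by this arrow contributes $v_iw_j$ copies of $q^{\tau(j)}(xq-y)$ and $w_iv_j$ copies of $-q^{\tau(j)}(x-yq)$. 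The $xq-y$ and $x-yq$ factors then cancel between $D_{AB}\big|_{\mathrm{spec}}$ and $\prod\zeta$; rewriting $x-yq^2=-(yq^2-x)$ (the resulting sign absorbing the $(-1)^{\sum_{\oij}v_jw_i}$ produced by $D_{AB}$) and collecting the surviving exponents of $x-y$, $yq^2-x$, $x-yq^{-2}$ and the power of $q$, I would match them against $-\langle\bv,\bw\rangle$, $\bv\cdot\bw-\langle\bw,\bv\rangle$, $\bv\cdot\bw$ and $\sum_{\oij}(\tau(i)v_iw_j+\tau(j)v_jw_i)$ respectively, using \eqref{eqn:lattice pairing} in the form $\langle\bv,\bw\rangle=\bv\cdot\bw-\sum_{\oij}v_iw_j$.

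There is no genuinely deep ingredient here: the statement is a bookkeeping identity once the left-hand side is made explicit. The main obstacle is exactly that first step — correctly determining which linear factors of the denominator of $R$ are killed by the two specialization maps (the within-group ones) and which survive (the cross factors $D_{AB}$), while keeping every sign and power of $q$ under control; and then the exact cancellation of the $xq-y$ and $x-yq$ factors, which occur on the left of the reduced identity but not on the right and so must disappear completely.
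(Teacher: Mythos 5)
Your proposal is correct and follows essentially the same route as the paper's (very terse) proof: unwind $\espec_{\bv}^{(x)}\otimes\espec_{\bw}^{(y)}(R)$ so that only the specialized cross factors of the denominator of \eqref{eqn:shuf ar} survive, compute $\zeta_{ij}\big(xq^{\tau(i)}/yq^{\tau(j)}\big)$ case by case according to whether $i=j$, there is an arrow $\oij$ or $\oji$, or the vertices are non-adjacent, and then match exponents using $\langle\bv,\bw\rangle=\bv\cdot\bw-\sum_{\oij}v_iw_j$. Your intermediate expressions agree with the paper's (e.g.\ $\frac{q(x-y)}{xq-y}=\frac{x-y}{x-yq^{-1}}$), so the bookkeeping checks out.
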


\medskip

\begin{proof}
By the definition of the specialization maps in \eqref{eqn:specialization ar}, we have
$$
  \spec_{\bv}^{(x)} \otimes \spec_{\bw}^{(y)}(R) =
  \frac{\gamma_{\bv}^{(x)} \gamma_{\bw}^{(y)} \cdot r(\dots, xq^{\tau(i)},\dots,yq^{\tau(j)},\dots)}
       {\prod_{\oij} \left[ (x - y q^{-1})^{v_i w_j}(yq - x)^{w_i v_j} q^{\tau(i)v_iw_j+\tau(j)v_jw_i}\right]}
$$
Meanwhile, by the very definition of $\zeta_{ij}$ in~\eqref{eqn:zeta}, we have
$$
  \zeta_{ij} \left( \frac {x q^{\tau(i)}}{yq^{\tau(j)}} \right) =
  \begin{cases}
    \frac {x-yq^{-2}}{x-y} &\text{if } i = j \\
    \frac {x-y}{x-yq^{-1}} &\text{if there is an arrow }\oij \\
    \frac {yq^2-x}{yq-x} &\text{if there is an arrow }\oji \\
    1 &\text{otherwise}
  \end{cases}
$$
Dividing the formulas above yields \eqref{eqn:count}.
\end{proof}

\medskip
\noindent
Because of the wheel conditions \eqref{eqn:wheel ar}, the numerator of the RHS of \eqref{eqn:count} is also divisible by a number of linear factors of the form $(x-yq^{\pm 2})$. In the following result, we count these linear factors in the case when $\bv$ and $\bw$ are positive roots.

\medskip

\begin{proposition}
\label{lem:pole ar}
If $\bv$ and $\bw$ are positive roots, then for any $R \in \CS_{\bv+\bw}$, the rational function \eqref{eqn:count} has a pole at $x=yq^2$ of order
$$
  \leq \max \big( 0,-\langle \bw,\bv\rangle \big)
$$
and a pole at $x=yq^{-2}$ of order
$$
  \leq \max \big( 0,\langle \bv,\bw\rangle \big)
$$
If $\bv < \bw$, then the inequalities $\langle \bv,\bw \rangle \leq 0 \leq \langle \bw,\bv \rangle$ imply that the maxima above are both 0, hence, the rational function \eqref{eqn:count} does not have poles at $x = y q^{\pm 2}$.
\end{proposition}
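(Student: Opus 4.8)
The plan is to plug Lemma~\ref{lem:key} into \eqref{eqn:count} and reduce to a lower bound for the order of vanishing of the numerator. Write $\mathcal{N}(x,y) := r(\ldots,xq^{\tau(i)},\ldots,yq^{\tau(j)},\ldots)$, with $r$ the Laurent polynomial attached to $R$ by \eqref{eqn:shuf ar}. Lemma~\ref{lem:key} expresses \eqref{eqn:count} as $\mathcal{N}$ times a nonzero constant, a monomial in $x,y$ (coming from $\gamma_\bv^{(x)}\gamma_\bw^{(y)}$ through \eqref{eqn:formula gamma}), and the reciprocal of $(x-y)^{-\langle\bv,\bw\rangle}(yq^2-x)^{\bv\cdot\bw-\langle\bw,\bv\rangle}(x-yq^{-2})^{\bv\cdot\bw}$. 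The monomial prefactor is a unit away from $x=0$ and $y=0$, so the two asserted pole bounds are equivalent to $\mathrm{ord}_{x=yq^2}\,\mathcal{N}\ge\bv\cdot\bw-\langle\bw,\bv\rangle-\max(0,-\langle\bw,\bv\rangle)$ and $\mathrm{ord}_{x=yq^{-2}}\,\mathcal{N}\ge\bv\cdot\bw-\max(0,\langle\bv,\bw\rangle)$. Using the Euler-form identities, the two right-hand sides are $\min(\bv\cdot\bw,\sum_{\oij}w_iv_j)$ and $\min(\bv\cdot\bw,\sum_{\oij}v_iw_j)$; and, writing $V=V_\bv$, $W=V_\bw$ and $d_{V,W}\colon\bigoplus_i\mathrm{Hom}(V_i,W_i)\to\bigoplus_{\oij}\mathrm{Hom}(V_i,W_j)$ for the quiver relation map, one has $\mathrm{rk}\,d_{V,W}=\bv\cdot\bw-\dim\mathrm{Hom}_Q(V,W)=\sum_{\oij}v_iw_j-\dim\mathrm{Ext}^1_Q(V,W)$, so that the vanishing of one of the two defects forced by \eqref{eqn:Hom-or-Ext} gives $\mathrm{rk}\,d_{V,W}=\min(\bv\cdot\bw,\sum_{\oij}v_iw_j)$, and likewise $\mathrm{rk}\,d_{W,V}=\min(\bv\cdot\bw,\sum_{\oij}w_iv_j)$. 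Hence it suffices to prove $\mathrm{ord}_{x=yq^{-2}}\,\mathcal{N}\ge\mathrm{rk}\,d_{V,W}$ and $\mathrm{ord}_{x=yq^2}\,\mathcal{N}\ge\mathrm{rk}\,d_{W,V}$; the two statements are carried into one another by swapping the two groups of variables (which swaps $\bv\leftrightarrow\bw$) together with $x\leftrightarrow y$, so I discuss only the first.

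\textbf{Mechanism for the vanishing.} The vanishing of $\mathcal{N}$ at $x=yq^{-2}$ comes from two divisibility features of $r$. First, the wheel conditions \eqref{eqn:wheel ar}: for an arrow $i\to j$ one has $\tau(i)=\tau(j)+1$, so after the substitution the $x$-variables of color $i$, the $y$-variables of color $i$ and the $y$-variables of color $j$ sit at $xq^{\tau(i)}$, $yq^{\tau(i)}$, $q^{-1}yq^{\tau(i)}$, which collapse to a genuine wheel $(q^{-1}w,w,q^{-2}w)$ precisely when $x=yq^{-2}$; freezing the remaining variables at their specialized values and applying \eqref{eqn:wheel ar} exhibits linear factors $x-yq^{-2}$ of $\mathcal{N}$, one for each variable of the colors $i,j$ that can be slotted into such a wheel. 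Second, the automatic divisibility of $r$ by $\prod_{a_{ii'}=0}\prod_{b,b'}(z_{ib}-z_{i'b'})$ noted after Definition~\ref{def:shuf}: if two non-adjacent colors $i,i'$ have $\tau$-values differing by $2$, pairing an $x$-variable of one with a $y$-variable of the other yields a factor $x-yq^{\pm2}$, whereas if those $\tau$-values coincide the substitution makes $\mathcal{N}$ vanish identically and there is nothing to prove. I would organize the count by performing the substitution one color at a time, in a total order on $I$ in which every vertex precedes its out-neighbours in $Q$ (possible since $Q$ is acyclic): at each step the linear factors just produced live in pairwise disjoint sets of not-yet-substituted variables, so they multiply, and the total exponent of $(x-yq^{-2})$ that is read off should match $\mathrm{rk}\,d_{V,W}$.

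\textbf{The obstacle, and the final assertion.} The hard point is exactly this last matching. Because the wheel conditions are codimension-two constraints, when several colors with $v_i\ge1$ flow into a common sink the naive product of the linear factors coming from distinct arrows over-counts, and one must show the vanishing actually present is still at least $\mathrm{rk}\,d_{V,W}$ — the non-adjacency factors of the second type being precisely what compensates in these degenerate configurations. I expect the clean way to handle this is either an induction on $|\bv|+|\bw|$ peeling off one indecomposable summand at a time from a short exact sequence relating $V$ and $W$ (in the spirit of the proof of Lemma~\ref{lem:minimal ar}), or a direct linear-algebra identification of $\mathcal{N}$, up to the explicit denominator factors, with a minor of the matrix of $d_{V,W}$, whose order of vanishing at $x=yq^{-2}$ is then governed by $\dim\mathrm{Ext}^1_Q(V,W)$. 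Granting the two vanishing estimates, the stated pole bounds follow at once, and the final assertion is immediate: if $\bv<\bw$ then $\langle\bv,\bw\rangle\le0\le\langle\bw,\bv\rangle$ by the second property of the AR partial order listed after Definition~\ref{def:ar}, so $\max(0,-\langle\bw,\bv\rangle)=\max(0,\langle\bv,\bw\rangle)=0$ and the rational function \eqref{eqn:count} is regular at $x=yq^{\pm2}$.
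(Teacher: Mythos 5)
Your reduction is sound as far as it goes: the arithmetic converting the two pole bounds into the lower bounds $\mathrm{ord}_{x=yq^{\mp2}}\,\mathcal{N}\geq\min(\bv\cdot\bw,\sum_{\oij}v_iw_j)$ and $\min(\bv\cdot\bw,\sum_{\oij}w_iv_j)$ is correct, and so is the identification of these quantities with $\mathrm{rk}\,d_{V,W}$ and $\mathrm{rk}\,d_{W,V}$ via \eqref{eqn:Hom-or-Ext}. But the proof stops exactly where the content begins: the lower bound on the order of vanishing of the specialized numerator is never established, and you acknowledge as much. This is a genuine gap, not a routine verification. The wheel conditions \eqref{eqn:wheel ar} are codimension-two constraints, and after specializing \emph{all} variables to the two-parameter family \eqref{eqn:M} there is no general principle that converts a collection of wheels into a product of linear factors $(x-yq^{-2})$ of the asserted multiplicity; the over-counting you flag when several wheels share variables is precisely the failure mode. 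Worse, the compensating mechanism you invoke is not available: the divisibility by $\prod_{a_{ii'}=0}(z_{ib}-z_{i'b'})$ noted after Definition \ref{def:shuf} is a property of the numerator $r$ in the presentation \eqref{eqn:shuf}, whereas the numerator appearing in Lemma \ref{lem:key} is the $r$ of \eqref{eqn:shuf ar}, i.e.\ the quotient by exactly those factors (the denominator of \eqref{eqn:shuf ar} runs only over arrows). So the second source of vanishing you describe is illusory, and neither of the two fallback strategies you mention is carried out.

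For comparison, the paper's proof avoids analyzing zeros of the numerator altogether. It uses Theorem \ref{thm:iso} to reduce to $R$ a shuffle product of monomials \eqref{eqn:shuffle product}, writes the left-hand side of \eqref{eqn:count} as the explicit sum \eqref{eqn:explicit} over allowable total orders on the specialized variables, and reads off the pole order at $x=yq^{\pm2}$ of each summand from \eqref{eqn:ratio of zetas} as a combinatorial quantity. Bounding that quantity becomes a linear optimization over the box $[\boldsymbol{0},\bv]\times[\boldsymbol{0},\bw]$, which reduces to the inequality $-\langle\bw_{I'},\bv_{I'}\rangle_{I'}\leq\max(0,-\langle\bw,\bv\rangle)$ for full subquivers, i.e.\ to the surjectivity of the restriction map $\mathrm{Ext}^1(W,V)\twoheadrightarrow\mathrm{Ext}^1(W_{I'},V_{I'})$ (Claim \ref{claim:gen}), proved by an explicit lift of extensions. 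If you want to salvage your route, you would need to prove the vanishing estimate on $\mathcal{N}$ directly — for instance by your proposed identification with minors of $d_{V,W}$ — but that is a substantial missing argument, not a detail.
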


\medskip

\begin{proof}
As a consequence of Theorem \ref{thm:iso}, any element of the shuffle algebra $\CS_{\bv+\bw}$ is a linear combination of expressions of the form
\begin{equation}
\label{eqn:shuffle product}
  R = z_{i_11}^{d_1} * \ldots * z_{i_k1}^{d_k}
\end{equation}
for any sequences $i_1,\dots,i_k \in I$, $d_1,\dots,d_k \in \BZ$ such that $\bs^{i_1} + \dots + \bs^{i_k} = \bv + \bw$. It therefore suffices to prove the required claims for $R$ as in \eqref{eqn:shuffle product}. In calculating the LHS of \eqref{eqn:count}, the variables of $R$ will be specialized to the multiset
\begin{equation}
\label{eqn:M}
  M = \Big\{\dots,\underbrace{xq^{\tau(i)},\dots,xq^{\tau(i)}}_{v_i \text{ occurrences}},\dots,\underbrace{yq^{\tau(j)},\dots,yq^{\tau(j)}}_{w_j \text{ occurrences}},\dots \Big\}
\end{equation}
By the very definition of the shuffle product, for $R$ as in \eqref{eqn:shuffle product} we have
\begin{equation}
\label{eqn:explicit}
  \text{LHS of \eqref{eqn:count}} \ =
  \sum_{\text{certain total orders }\succ\text{ on }M} \text{monomial} \prod_{x q^{\tau(i)} \succ y q^{\tau(j)}}
  \frac {\zeta_{ji} \left( \frac {yq^{\tau(j)}}{xq^{\tau(i)}} \right)}{\zeta_{ij} \left( \frac {xq^{\tau(i)}}{yq^{\tau(j)}} \right)}
\end{equation}
Since
\begin{equation}
\label{eqn:ratio of zetas}
  \frac {\zeta_{ji} \left( \frac {yq^{\tau(j)}}{xq^{\tau(i)}} \right)}{\zeta_{ij} \left( \frac {xq^{\tau(i)}}{yq^{\tau(j)}} \right)} =
  \begin{cases}
    \frac {x-yq^2}{xq^2-y} &\text{if }i = j \\
    \frac {xq-yq^{-1}}{x-y} &\text{if there is an arrow }\oij \\
    \frac {x-y}{xq^{-1}-yq} &\text{if there is an arrow }\oji \\
    1 &\text{otherwise}
  \end{cases}
\end{equation}
the orders of the poles at $x=yq^2$ and $x=yq^{-2}$ in any given summand of \eqref{eqn:explicit}~are
$$
  A = - \sum_{i \in I} \# \Big\{\big(xq^{\tau(i)} \succ y q^{\tau(i)}\big)\Big\} + \sum_{\overrightarrow{ji}} \# \Big\{\big(xq^{\tau(i)} \succ y q^{\tau(j)}\big)\Big\}
$$
$$
  B = \sum_{i \in I} \#\Big\{\big(xq^{\tau(i)} \succ y q^{\tau(i)}\big)\Big\} - \sum_{\overrightarrow{ij}} \# \Big\{\big(xq^{\tau(i)} \succ y q^{\tau(j)}\big)\Big\}
$$
respectively. It suffices to consider only those total orders $\succ$ on $M$ of \eqref{eqn:M} for which $x q^{\tau(i)} \succ x q^{\tau(j)}$ and $y q^{\tau(i)} \succ y q^{\tau(j)}$ for all arrows $\overrightarrow{ij}$ (indeed, if one of such inequalities failed, then the corresponding summand in the RHS of \eqref{eqn:explicit} would include the factor $\zeta_{ij}(q) = 0$ for adjacent $i,j$). We will refer to such total orders as ``allowable". Therefore, it just remains to show that
\begin{equation}
\label{eqn:need to show}
  A \leq \max \big( 0,-\langle \bw,\bv\rangle \big) \qquad \text{and} \qquad B \leq \max \big( 0,\langle \bv,\bw\rangle \big)
\end{equation}
for any allowable total order $\succ$ on $M$. We will only prove the inequality involving $A$, since it implies the inequality involving $B$ due to the identity
$$
  B = \langle \bv,\bw \rangle + (A \text{ for the opposite quiver and the opposite order})
$$

\medskip
\noindent
Fix an allowable total order $\succ$ on $M$, and we will call a variable $xq^{\tau(i)}$ (respectively $y q^{\tau(j)}$) ``distinguished" if it is greater (respectively smaller) than any variable $y q^{\tau(i)}$ (respectively $x q^{\tau(j)}$) with respect to the total order $\succ$. The total number of distinguished variables $xq^{\tau(i)}$ (respectively $yq^{\tau(j)}$) will be encoded by the degree vectors $\bv' \in [\b0,\bv]$ (respectively $\bw' \in [\b0,\bw]$), meaning $0\leq v'_i\leq v_i$ (respectively $0\leq w'_i\leq w_i$) for any $i\in I$. As the order $\succ$ is allowable, if we have two variables
$$
  xq^{\tau(i)} \succ yq^{\tau(j)}
$$
for some arrow $\oji$, then both variables $xq^{\tau(i)}$ and $yq^{\tau(j)}$ must be distinguished. We therefore conclude that
$$
  A \leq -\sum_{i\in I}(v_i'w_i + v_iw_i' - v_i'w_i') + \sum_{\oji} v_i' w'_j
$$
with the term $-(v_i'w_i + v_iw_i' - v_i'w_i')$ counting those pairs of variables $xq^{\tau(i)} \succ yq^{\tau(i)}$ where at least one of the variables is distinguished. Therefore, \eqref{eqn:need to show} follows from the following combinatorial result.

\medskip

\begin{claim}
If $\bv,\bw$ are positive roots, and $\bv' \in [\b0, \bv], \bw' \in [\b0,\bw]$ are arbitrary, then
\begin{equation}
\label{eqn:claim ineq}
  -\sum_{i\in I}(v_i'w_i + v_iw_i' - v_i'w_i') + \sum_{\oji} v_i' w'_j \leq \max \big( 0,-\langle \bw,\bv\rangle \big)
\end{equation}
\end{claim}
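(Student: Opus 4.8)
\medskip
\noindent
\textbf{Proof proposal.} The plan is to make two elementary reductions that replace \eqref{eqn:claim ineq} by a cleaner inequality about the Euler form of a subquiver, then translate that inequality into representation theory and feed it into \eqref{eqn:Hom-or-Ext}.

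Denote the left--hand side of \eqref{eqn:claim ineq} by $\Phi(\bv',\bw')$. For fixed $\bw'$ it is affine in each coordinate $v_i'$ separately, and symmetrically in the $w_j'$, so its maximum over the box $\prod_i[0,v_i]\times\prod_j[0,w_j]$ of integer points is attained at a vertex; thus we may assume $v_i'\in\{0,v_i\}$ and $w_j'\in\{0,w_j\}$. Writing $S=\{i:v_i'=v_i\}$ and $T=\{j:w_j'=w_j\}$, an inclusion--exclusion identity collapses the three sums $-\sum_i v_i'w_i-\sum_i v_iw_i'+\sum_i v_i'w_i'$ into $-\sum_{i\in S\cup T}v_iw_i$, while $\sum_{\oji}v_i'w_j'=\sum_{\oji,\ i\in S,\ j\in T}v_iw_j$. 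Every term of the latter is nonnegative and indexed by an arrow with both ends in $U:=S\cup T$, so enlarging the range to all arrows inside $U$ only increases it; hence $\Phi(\bv',\bw')\le\psi(U)$, where for $W\subseteq I$ I set $\psi(W):=-\sum_{i\in W}v_iw_i+\sum_{a\to b\text{ in }Q,\ a,b\in W}v_bw_a$. Writing $\langle\cdot,\cdot\rangle_{Q_W}$ for the analogue of \eqref{eqn:lattice pairing} attached to the full subquiver $Q_W$ of $Q$ on $W$, one has $\psi(W)=-\langle\bw|_W,\bv|_W\rangle_{Q_W}$ and $\psi(I)=-\langle\bw,\bv\rangle$, so it suffices to prove $\psi(U)\le\max(0,\psi(I))$ for every $U\subseteq I$.

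Next, a direct computation gives, for $k\notin U$, the identity
\[
  \psi(U\cup\{k\})-\psi(U)=-v_kw_k+w_k\!\!\sum_{\substack{b\in U\\ k\to b}}\!\!v_b+v_k\!\!\sum_{\substack{a\in U\\ a\to k}}\!\!w_a,
\]
which is $\ge 0$ whenever $v_k=0$ or $w_k=0$. Inducting on $|I\setminus U|$ (the case $U=I$ being trivial), we may therefore assume $I\setminus U\subseteq\mathrm{supp}(\bv)\cap\mathrm{supp}(\bw)$. Since vertices outside $\mathrm{supp}(\bv)\cup\mathrm{supp}(\bw)$ contribute nothing to any value of $\psi$, we may replace $Q$ by its full subquiver on $\mathrm{supp}(\bv)\cup\mathrm{supp}(\bw)$; if that is disconnected then $\psi\equiv 0$ and there is nothing to prove, so we may assume $Q$ is connected and $\bv,\bw$ are honest positive roots of this (still ADE) quiver. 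In this core case let $V,W$ be the indecomposables of $Q$ with dimension vectors $\bv,\bw$; by \eqref{eqn:Hom-or-Ext}, $\langle\bw,\bv\rangle=\dim\mathrm{Hom}_Q(W,V)-\dim\mathrm{Ext}^1_Q(W,V)$ with one term zero. If $\mathrm{Ext}^1_Q(W,V)=0$ then $\psi(I)\le 0$ and one must show $\psi(U)\le 0$, i.e.\ $\langle\bw|_U,\bv|_U\rangle_{Q_U}\ge 0$; if $\mathrm{Hom}_Q(W,V)=0$ then $\psi(I)=\dim\mathrm{Ext}^1_Q(W,V)\ge 0$ and one must show $\psi(U)\le\dim\mathrm{Ext}^1_Q(W,V)$. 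I would decompose the restricted representations $V|_U=\bigoplus_p M_p$ and $W|_U=\bigoplus_q N_q$ into $Q_U$-indecomposables — whose dimension vectors are roots of $Q_U$ squeezed between $\bv$ and $\bw$, much as in the proof of Lemma~\ref{lem:minimal ar} — apply \eqref{eqn:Hom-or-Ext} over $Q_U$ to each pair $(N_q,M_p)$, and then \emph{lift} a nonsplit extension datum (resp.\ a nonzero homomorphism) from representations of $Q_U$ to representations of $Q$ by extending the representations and the connecting maps across the vertices of $I\setminus U$: this is where one uses that the Dynkin diagram is a tree, so no cycle-compatibility obstructs the extension, and that every missing vertex lies in $\mathrm{supp}(\bv)\cap\mathrm{supp}(\bw)$. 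A lift contradicts $\mathrm{Ext}^1_Q(W,V)=0$ in the first case, and supplies enough linearly independent classes in $\mathrm{Ext}^1_Q(W,V)$ in the second.

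The step I expect to be the main obstacle is precisely this last lifting: restriction to a full subquiver is exact but carries no obvious adjoint compatible with $\mathrm{Ext}^1$, so moving extension classes and homomorphisms from $Q_U$ back to $Q$ is the genuine content of the Claim. I expect it to require either the induction/coinduction functors for the inclusion $Q_U\hookrightarrow Q$ and the long exact sequences they yield, or a hands-on construction of the representations on the missing vertices — made possible by the reductions above — combined with the three monotonicity properties of the AR order stated after Definition~\ref{def:ar}.
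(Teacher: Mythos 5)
Your combinatorial reductions track the paper's own proof closely and are correct: the passage to vertices of the box, the repackaging of the left-hand side of \eqref{eqn:claim ineq} as $-\langle \bw|_U,\bv|_U\rangle_{Q_U}$ for a subset $U \subseteq I$ (the paper arrives at $U = I'$ where you take $U = S \cup T$, but both bounds are valid), and the translation via \eqref{eqn:Hom-or-Ext} into the inequality $\dim \text{Ext}^1(W_U,V_U) - \dim \text{Hom}(W_U,V_U) \leq \dim \text{Ext}^1(W,V)$ are all essentially what the paper does. The genuine gap is exactly the step you flag at the end and defer: you never prove that $\text{Ext}^1$-classes over the full subquiver $Q_U$ lift to $\text{Ext}^1$-classes over $Q$. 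Once the combinatorics is done, that lifting \emph{is} the content of the Claim, so leaving it as "I expect it to require either induction/coinduction functors or a hands-on construction" means the argument does not close.

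The paper closes it with Claim \ref{claim:gen}: for \emph{arbitrary} finite-dimensional representations $V,W$ and any full subquiver on a vertex set $I'$, the restriction map $\text{Ext}^1(W,V) \twoheadrightarrow \text{Ext}^1(W_{I'},V_{I'})$ is surjective, hence $\dim \text{Ext}^1(W_{I'},V_{I'}) \leq \dim \text{Ext}^1(W,V)$, and this single inequality handles both of your cases at once (if $\text{Ext}^1(W,V)=0$ it forces $\text{Ext}^1(W_U,V_U)=0$, so $\psi(U) = -\dim\text{Hom}(W_U,V_U) \leq 0$; if $\text{Hom}(W,V)=0$ it gives the bound directly). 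The proof is a two-line explicit construction: split $X_i \simeq V_i \oplus W_i$ for $i \in I'$, place the split extension $V_i \oplus W_i$ at every vertex $i \notin I'$, and use the block-diagonal maps $(\phi_{\oij},\psi_{\oij})$ on every arrow meeting $I \setminus I'$. Note that this needs none of the scaffolding you set up for the core case --- no acyclicity or tree property, no connectedness, no condition $I\setminus U \subseteq \mathrm{supp}(\bv)\cap\mathrm{supp}(\bw)$, and no decomposition of $V_U, W_U$ into $Q_U$-indecomposables --- so those reductions, while correct, can be discarded. Finally, the branch of your sketch that lifts "a nonzero homomorphism" is a red herring: restriction of $\text{Hom}$ is \emph{not} surjective in general (new morphisms can appear after forgetting arrows), and it is never needed.
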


\medskip
\noindent
It remains to prove the Claim above, and we will prove it in the case when $\bv'$ and $\bw'$ are allowed to have any real coordinates in the boxes $[\b0,\bv]$ and $[\b0,\bw]$, respectively. Because the aforementioned boxes are compact, the LHS of \eqref{eqn:claim ineq} must reach its maximum at a point in the box $[\b0,\bv] \times [\b0,\bw]$. However, because the LHS of \eqref{eqn:claim ineq} is linear in each coordinate $v_i'$, $w_i'$ (and a linear function on an interval is maximized at one of the endpoints of the interval), it remains to prove \eqref{eqn:claim ineq} when each variable $v_i',w_i'$ is equal to either 0 or $v_i,w_i$, respectively. Thus, for any decompositions
$$
  I = I' \sqcup I'' = \tilde{I}' \sqcup \tilde{I}''
$$
it remains to prove \eqref{eqn:claim ineq} when $v_i' = v_i$ for $i \in I'$, $v_i' = 0$ for $i \in I''$, $w_i' = w_i$ for $i \in \tilde{I}'$, $w_i' = 0$ for $i \in \tilde{I}''$. Thus, we need to prove the following inequality
$$
  -\sum_{i \in I \setminus (I'' \cap \tilde{I}'')} v_iw_i \ + \sum_{\oji,\, i \in I',\, j \in \tilde{I}'} v_i w_j \, \leq \, \max \big( 0,-\langle \bw,\bv\rangle \big)
$$
Clearly, the left-hand side of the equation above is maximized when $\tilde{I}' = I'$, $\tilde{I}'' = I''$. Therefore, it remains to prove that
$$
  - \langle \bw_{I'} ,\bv_{I'} \rangle_{I'} \leq \max \big( 0,-\langle \bw,\bv\rangle \big)
$$
where $\bv_{I'}, \bw_{I'}$ denote the projections of the vectors $\bv,\bw \in \nn$ onto the coordinates indexed by $I'$, while $\langle \cdot , \cdot \rangle_{I'}$ denotes the restriction of \eqref{eqn:lattice pairing}  to $\BZ^{I'}\times \BZ^{I'} \subset \BZ^{I}\times \BZ^{I}$.

\medskip
\noindent
From the point of view of quiver representations, the inequality above reads
\begin{equation}
\label{eqn:quiver ineq}
  \dim \text{Ext}^1(W_{I'},V_{I'}) - \dim \text{Hom}(W_{I'},V_{I'}) \leq \dim \text{Ext}^1(W,V)
\end{equation}
for any indecomposable representations $V,W$ of the quiver $Q$, where $V_{I'},W_{I'}$ denote their restrictions to the full subquiver corresponding to the vertex set $I' \subset I$ (here we used~\eqref{eqn:Hom-or-Ext} again). In fact, inequality \eqref{eqn:quiver ineq} follows from the stronger inequality
\begin{equation}
\label{eqn:quiver ineq strong}
  \dim \text{Ext}^1(W_{I'},V_{I'}) \leq \dim \text{Ext}^1(W,V)
\end{equation}
that holds for all finite-dimensional $Q$-representations $V$ and $W$. In turn, inequality \eqref{eqn:quiver ineq strong} follows immediately from the claim below.

\medskip

\begin{claim}
\label{claim:gen}
For any finite-dimensional representations $V,W$ of the quiver $Q$, let $V_{I'},W_{I'}$ denote their restrictions to the full subquiver $Q'$ corresponding to the vertex set $I' \subset I$. Then, the natural restriction map
\begin{equation}
\label{eqn:surjection}
  \text{Ext}^1(W,V) \twoheadrightarrow \text{Ext}^1(W_{I'},V_{I'})
\end{equation}
is surjective.
\end{claim}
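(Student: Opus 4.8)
The plan is to realize $\text{Ext}^1$ via an explicit projective (or injective) resolution and track what happens under restriction to a full subquiver. For a quiver $Q$ (no relations, so the path algebra is hereditary), there is the standard two-term complex computing $\text{Ext}$: for any representations $W$ and $V$, one has the canonical exact sequence
\begin{equation*}
  0 \to \text{Hom}(W,V) \to \bigoplus_{i \in I} \text{Hom}_{\BF_{q^2}}(W_i, V_i) \xrightarrow{\ d_{W,V}\ } \bigoplus_{e = \oij} \text{Hom}_{\BF_{q^2}}(W_i, V_j) \to \text{Ext}^1(W,V) \to 0,
\end{equation*}
where $d_{W,V}$ sends a tuple $(\psi_i)_{i\in I}$ to the tuple whose $e=\oij$ component is $\phi^V_e \circ \psi_i - \psi_j \circ \phi^W_e$. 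In other words, $\text{Ext}^1(W,V)$ is the cokernel of $d_{W,V}$. (This is just the complex computing the cohomology of the quiver with coefficients in $\text{Hom}_{\BF_{q^2}}(W,V)$, and its Euler characteristic recovers $\dim\text{Hom} - \dim\text{Ext}^1 = \langle \bw, \bv\rangle$, matching~\eqref{eqn:lattice pairing}.)

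The key observation is then that restriction to the full subquiver $Q'$ on the vertex set $I'$ is compatible with these complexes in the most naive possible way: the edge-term $\bigoplus_{e=\oij} \text{Hom}(W_i,V_j)$ for $Q'$ is the direct summand of the corresponding term for $Q$ indexed by those edges $e = \oij$ with both $i,j \in I'$ (since $Q'$ is a \emph{full} subquiver, it contains exactly those edges), and likewise the vertex-term for $Q'$ is the summand indexed by $i \in I'$. Under these inclusions of summands the differential $d_{W_{I'},V_{I'}}$ is literally the restriction of $d_{W,V}$, because the formula $\phi^V_e \psi_i - \psi_j \phi^W_e$ for an edge $e$ inside $I'$ involves only the $\psi_i$ with $i \in I'$ and the structure maps of $V_{I'}, W_{I'}$. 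Projecting the edge-term of $Q$ onto the edge-term of $Q'$ and restricting on the vertex-term thus gives a commuting square, which on cokernels produces the map~\eqref{eqn:surjection}. Surjectivity is now immediate: the edge-term for $Q$ surjects onto the edge-term for $Q'$ (it's a direct-sum projection), and this is compatible with passing to cokernels, so $\text{Ext}^1(W,V) = \text{coker}(d_{W,V})$ surjects onto $\text{coker}(d_{W_{I'},V_{I'}}) = \text{Ext}^1(W_{I'},V_{I'})$.

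I expect essentially no obstacle here; the only point requiring a line of care is the exactness of the displayed four-term sequence, i.e.\ that the path algebra of $Q$ is hereditary and that this particular complex computes $\text{Ext}^0$ and $\text{Ext}^1$ — but this is classical (it is the deleted projective resolution of $W$ by the projectives $P_i \otimes W_i \to \bigoplus_{e=\oij} P_j \otimes W_i$, cf.\ the vanishing $\text{Ext}^{\geq 2} = 0$ already invoked after~\eqref{eqn:iso hall}). With Claim~\ref{claim:gen} in hand, inequality~\eqref{eqn:quiver ineq strong} follows because a surjection cannot increase dimension, then~\eqref{eqn:quiver ineq} follows from~\eqref{eqn:Hom-or-Ext} as indicated, and this closes the proof of the Claim preceding it, hence of Proposition~\ref{lem:pole ar}.
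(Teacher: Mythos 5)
Your proof is correct, and it takes a genuinely different (more homological) route than the paper. The paper argues directly with extensions: it represents a class in $\text{Ext}^1(W_{I'},V_{I'})$ by short exact sequences at the vertices of $I'$, chooses vector-space splittings $X_i\simeq V_i\oplus W_i$, and then explicitly extends to a $Q$-representation by putting split sequences at the vertices of $I\backslash I'$ and the direct-sum arrow maps $(\phi_{\oij},\psi_{\oij})$ on every edge touching $I\backslash I'$. You instead invoke the standard two-term complex computing $\text{Ext}^\bullet$ over the (hereditary) path algebra and observe that restriction to a full subquiver is realized by projecting onto direct summands of the vertex and edge terms, so surjectivity on $\text{Ext}^1=\text{coker}$ is formal. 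The two arguments are secretly the same construction --- lifting a cocycle $(\theta_e)_{e\subset Q'}$ by extending it by zero on the edges outside $Q'$ is exactly the paper's ``direct-sum arrow maps'' prescription --- but your packaging makes the surjectivity a one-line consequence of right-exactness of cokernels, at the cost of citing the identification of $\text{Ext}^1$ with the cokernel of $d_{W,V}$ and (a point worth one sentence of care) checking that the induced map on cokernels agrees with the natural restriction map on extensions; both facts are classical and the check is immediate once one notes that the cocycle attached to an extension via splittings restricts to the cocycle of the restricted extension. The paper's version is self-contained and elementary; yours is shorter and generalizes without change to any subquiver closed under the relevant edges. No gap.
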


\medskip
\noindent
It thus remains to prove Claim \ref{claim:gen}. Any element of $\text{Ext}^1(W_{I'},V_{I'})$ can be represented by a collection of short exact sequences of vector spaces
\begin{equation}
\label{eqn:extension}
  \big\{ 0 \rightarrow V_i \rightarrow X_i \rightarrow W_i \rightarrow 0 \big\}_{i \in I'}
\end{equation}
which are compatible with the arrow maps of the subquiver $Q'$ with the vertex set $I'$. Let us choose $\mathbb{F}_{q^2}$ vector space splittings $X_i \simeq V_i \oplus W_i$ for all $i \in I'$, with respect to which the short exact sequences above are
$$
  0 \rightarrow V_i \xrightarrow{(\text{Id},0)} V_i \oplus W_i \xrightarrow{(0,\text{Id})} W_i \rightarrow 0, \qquad \forall\, i \in I'
$$
although we do not claim that the arrow maps in $Q'$ respect these decompositions. To show that our given extension lies in the image of the map \eqref{eqn:surjection}, we must extend it to an extension of the $Q$-representations $V$ and $W$. To this end, we consider the split short exact sequences
$$
  0 \rightarrow V_i \xrightarrow{(\text{Id},0)} V_i \oplus W_i \xrightarrow{(0,\text{Id})} W_i \rightarrow 0, \qquad \forall\, i \in I \setminus I'
$$
and extend $\{V_i \oplus W_i\}_{i \in I}$ to a $Q$-representation by defining the arrow maps to be
$$
  V_i \oplus W_i \xrightarrow{(\phi_{\overrightarrow{ij}}, \psi_{\overrightarrow{ij}})} V_j \oplus W_j
$$
whenever either $i$ or $j$ lie in $I \setminus I'$ (above, $\phi_{\overrightarrow{ij}}\colon V_i\to V_j$ and $\psi_{\overrightarrow{ij}}\colon W_i\to W_j$ denote the arrow maps in the quiver representations $V$ and $W$, respectively, see~\eqref{eqn:arrow map}), while using the arrow maps $X_i\to X_j$ whenever $i,j\in I'$. The resulting $Q$-representation yields an element of $\text{Ext}^1(W,V)$ which lifts our choice of extension~\eqref{eqn:extension}.
\end{proof}


\medskip

\subsection{Fused currents for the AR partial order}

In the present Subsection, we invoke Proposition \ref{prop:completion pairing} to define for any $(\alpha,d) \in \Delta^+ \times \BZ$
\begin{equation*}
  f_{\alpha,d} \in \wUUm_{-\alpha,d}
\end{equation*}
by the formula (we let $f_{\alpha}(x) = \sum_{d \in \BZ} \frac {f_{\alpha,d}}{x^d}$ and call them the ``fused currents")
$$
  \Big \langle R, f_{\alpha}(x) \Big \rangle = \spec^{(x)}_\alpha(R), \qquad \forall\, R \in \CS_{\alpha}
$$
Let us recall the fused currents $\tf_{\alpha}(x)$ of \cite{DK} (see Subsections \ref{sub:fused} and \ref{sub:specialization}) in the particular case of a type ADE Dynkin diagram, defined with respect to any total convex order of the positive roots that refines the Auslander--Reiten partial order.

\medskip

\begin{conjecture}
\label{conj:match}
For any orientation $Q$ of a type ADE Dynkin diagram and any total convex order of the positive roots that refines the AR partial order, we have
$$
  \tf_{\alpha}(x) = c_{\alpha}^{(x)} \cdot f_{\alpha}(x) \quad \textit{or equivalently} \quad \tespec_{\alpha}(x) = c_{\alpha}^{(x)} \cdot \espec_{\alpha}(x)
$$
for all $\alpha \in \Delta^+$, where $c_{\alpha}^{(x)} \in \BQ(q,x)^\times$ is some scalar prefactor.
\end{conjecture}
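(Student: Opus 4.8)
The plan is to separate the logical equivalence in Conjecture~\ref{conj:match} from its substantive content. The equivalence $\Longleftrightarrow$ is formal: granting Conjecture~\ref{conj:dk}, both $f_{\alpha,d}$ and $\tf_{\alpha,d}$ lie in $\wUUm_{-\alpha,d}$, and by their respective definitions $\langle R, f_\alpha(x)\rangle = \espec^{(x)}_\alpha(R)$ and $\langle R, \tf_\alpha(x)\rangle = \tespec^{(x)}_\alpha(R)$ for every $R \in \CS_\alpha$. Since the pairing~\eqref{eqn:pairing} is non-degenerate (Theorem~\ref{thm:dual}) and extends to the completion by Proposition~\ref{prop:completion pairing}, the identity $\tf_\alpha(x) = c^{(x)}_\alpha f_\alpha(x)$ holds if and only if $\langle R,\tf_\alpha(x)\rangle = c^{(x)}_\alpha \langle R, f_\alpha(x)\rangle$ for all $R$, i.e.\ if and only if $\tespec^{(x)}_\alpha = c^{(x)}_\alpha \espec^{(x)}_\alpha$. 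It therefore suffices to exhibit a scalar $c^{(x)}_\alpha \in \BQ(q,x)^\times$ realizing the specialization-side equality, and I would do this by induction on $\height(\alpha)$.

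For the base case $\alpha = \alpha_i$, the fused current $\tf_{\alpha_i}(x)$ reduces to the bare current $f_i(x)$ (for the AR order, $\alpha_i$ corresponds to the one-dimensional indecomposable and Definition~\ref{def:fused currents} involves no braiding), while $f_{\alpha_i}(x)$ is by construction dual to $\espec^{(x)}_{\bs^i}$; a one-variable computation using~\eqref{eqn:pair shuf generators} and Definition~\ref{def:specialization ar} then pins down $c^{(x)}_{\alpha_i}$ as $\gamma^{(x)}_{\bs^i}$ times an explicit normalization. For the inductive step, fix a minimal pair $\alpha < \beta$ with $\alpha+\beta \in \Delta^+$. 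Theorem~\ref{thm:main intro} supplies the fusion relation~\eqref{eqn:fused intro equality}, which exhibits $f_{\alpha+\beta}(x)$ as the $\delta$-function residue built from $f_\alpha$ and $f_\beta$; the expected relation~\eqref{eqn:conj comm} does the same for $\tf_{\alpha+\beta}$, with data $u,\tt,c(q)$. Substituting the induction hypothesis $\tf_\gamma = c^{(x)}_\gamma f_\gamma$ for $\gamma \in \{\alpha,\beta\}$ into~\eqref{eqn:conj comm} and comparing with~\eqref{eqn:fused intro equality}, the two rational kernels on the left agree once Lemma~\ref{lem:minimal ar} forces $(\alpha,\beta)=-1$ and fixes $\tt = 1$; matching residues at the resulting single pole then yields $\tf_{\alpha+\beta}(xq^u) = c^{(x)}_{\alpha+\beta} f_{\alpha+\beta}(x)$ for an explicitly determined $c^{(x)}_{\alpha+\beta}$. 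Convexity of the AR order ensures that every non-simple positive root is the top of some minimal pair, so the induction reaches all of $\Delta^+$.

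The main obstacle is that this step invokes~\eqref{eqn:conj comm}, which is itself Conjecture~\ref{conj:comm} and hence presently unproven; as written, the argument only shows that Conjectures~\ref{conj:dk} and~\ref{conj:comm} together imply Conjecture~\ref{conj:match}. To remove this dependence I would prove the minimal-pair fusion relation for the Ding--Khoroshkin currents directly in $\wUUaff$, using the braid formulas~\eqref{eq:adjacent} and~\eqref{eq:same-vertices} to reduce the commutator $\tf_\alpha(x)\tf_\beta(y) - \tf_\beta(y)\tf_\alpha(x)\cdot(\cdots)$ to a finite-type Levendorskii--Soibelman relation in the root vectors (Claim~\ref{claim:deep} and Beck--Damiani). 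The AR-specific input here is Proposition~\ref{lem:pole ar}: the vanishing of the poles at $x = yq^{\pm 2}$ for a minimal pair $\bv < \bw$ is the representation-theoretic reason the right-hand side collapses to a single $\delta$-function.

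A cleaner route that sidesteps Conjecture~\ref{conj:comm} altogether is to compute $\tespec^{(x)}_\alpha(R) = \langle R,\, T^{-1}_{i_n}\cdots T^{-1}_{i_{k+1}}(f_{i_k,d})\rangle$ by transporting the braid operators to the shuffle side $\CS \simeq \UUp$, and checking term by term that their dual action, evaluated at the AR point $z_{ib}\mapsto xq^{\tau(i)}$, reproduces Definition~\ref{def:specialization ar}. I expect the hardest single step to be obtaining a closed formula for the braid action on $\CS$ that is compatible with this specialization; this is precisely where the combinatorics of the AR quiver, and the pole count of Proposition~\ref{lem:pole ar}, must do the real work.
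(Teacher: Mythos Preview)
This statement is a \emph{conjecture} in the paper and is not proved there. The text immediately following it says only ``To motivate Conjecture~\ref{conj:match}\dots'' and then proves Theorem~\ref{thm:main intro} (the fusion relation~\eqref{eqn:fused intro equality} for the shuffle-side currents $f_\alpha(x)$); that is offered as evidence for the conjecture, not as a proof of it. There is therefore no ``paper's own proof'' to compare your proposal against.

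Your proposal is likewise not a proof, and you say so yourself: the inductive step rests on Conjecture~\ref{conj:comm}, and even the formal equivalence $\Longleftrightarrow$ presupposes Conjecture~\ref{conj:dk} (so that $\tf_{\alpha,d}$ lands in $\wUUm$ and the pairing with $\CS$ is defined). The two alternative routes you sketch at the end are programs rather than arguments. What you have written is a sensible outline of what a proof would require, correctly identifying that it is conditional on the other open conjectures in the paper. One technical point in your first paragraph: to pass from equality of all pairings $\langle R,\cdot\rangle$ back to equality of elements in $\wUUm$, you need the extended pairing $\CS\otimes\wUUm\to\BQ(q)$ to \emph{separate points of the completion}, not merely non-degeneracy on $\CS\otimes\UUm$. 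This does follow from the proof of Proposition~\ref{prop:completion pairing} (the pairing is perfect on each slope piece and the completion is the product over convex paths), but it is an extra step that should be stated.
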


\medskip
\noindent
To motivate Conjecture \ref{conj:match}, let us carry out the program from Subsection \ref{sub:specialization} for the objects $f_\alpha(x)$ and $\spec_{\alpha}^{(x)}$ defined in the present Section.

\begin{proof}[Proof of Theorem \ref{thm:main intro}, specifically equation \eqref{eqn:fused intro equality}.]
Combining together Lemma \ref{lem:minimal ar} and Proposition \ref{lem:pole ar} implies that if $\alpha < \beta$ is a minimal pair such that $\alpha+\beta \in \Delta^+$, then
\begin{equation}
\label{eqn:simple}
  \frac {\spec_{\alpha}^{(x)} \otimes \spec_{\beta}^{(y)}(R)}{\prod_{i,j \in I} \zeta_{ij} \left( \frac {xq^{\tau(i)}}{yq^{\tau(j)}} \right)^{\alpha_i\beta_j}} =
  \frac {\text{Laurent polynomial}}{x-y}
\end{equation}
for any $R \in \CS_{\alpha+\beta}$. Meanwhile, the rational function \eqref{eqn:rational function} is easily computed to~be
$$
  \prod_{i,j \in I} \left[ \frac {\zeta_{ji} \left( \frac {yq^{\tau(j)}}{xq^{\tau(i)}} \right)}{\zeta_{ij} \left( \frac {xq^{\tau(i)}}{yq^{\tau(j)}} \right)} \right]^{\alpha_i\beta_j}
  \overset{\eqref{eqn:ratio of zetas}}{=}
  \left(\frac {x-y}{xq-yq^{-1}}\right)^{\langle \alpha,\beta \rangle} \left(\frac {xq^{-1}-yq}{x-y}\right)^{\langle \beta,\alpha \rangle}
  \overset{\eqref{eqn:minimal ar}}{=} \frac {xq-yq^{-1}}{x-y}
$$
Thus, the difference of \eqref{eqn:expansion 1} and \eqref{eqn:expansion 2}, specifically
$$
  \Big \langle R, f_{\alpha}(x) f_{\beta}(y) \Big \rangle \Big|_{|x| \ll |y|} - \left \langle R,  f_{\beta}(y)f_{\alpha}(x)  \frac {xq-yq^{-1}}{x-y} \right \rangle \Big|_{|x| \gg |y|}
$$
is just the difference between the expansions at $|x| \ll |y|$ and $|x| \gg |y|$ of the rational function \eqref{eqn:simple}. Explicitly, the rational function in question is \eqref{eqn:count} for $\bv = \alpha$ and $\bw = \beta$. The aforementioned rational function has a single simple pole at $x = y$ with residue given by
$$
  \frac{\gamma_{\alpha}^{(x)} \gamma_{\beta}^{(x)}  \cdot r(\dots, xq^{\tau(i)},\dots)}
       {(x q^2 - x)^{\alpha \cdot \beta - \langle \beta,\alpha \rangle} (x -x q^{-2})^{\alpha \cdot \beta}q^{\sum_{\oij}(\tau(i)\alpha_i\beta_j+\tau(j)\alpha_j\beta_i)}} =
  \gamma_{\alpha+\beta}^{(x)} \cdot  r(\dots, xq^{\tau(i)},\dots)
$$
(the latter equality is the reason for the specific formula of $\gamma_{\bv}^{(x)}$ in \eqref{eqn:formula gamma}; it is quite elementary, based on $\langle \beta,\alpha\rangle = 0$ of Lemma~\ref{lem:minimal ar} and $\tau(i)-\tau(j)=1$ for any arrow $\oij$ in $Q$, so we leave its proof as an exercise to the reader). Once we observe that the residue above is precisely $\spec_{\alpha+\beta}^{(x)}(R)$, we conclude that
\begin{equation}
\label{eqn:fused ar comm}
  f_{\alpha}(x) f_{\beta}(y) -  f_{\beta}(y)f_{\alpha}(x)  \frac {xq-yq^{-1}}{x-y} = \delta\left(\frac xy\right) f_{\alpha+\beta}(x)
\end{equation}
\end{proof}

\noindent
We remark that formula \eqref{eqn:fused ar comm} allows one to inductively define the fused currents corresponding to the AR partial order from $f_{\alpha_i}(x) = f_i(x)$ for simple roots $\{\alpha_i\}_{i \in I}$.


\bigskip

\end{document}